\documentclass[12pt]{amsart}

\usepackage{verbatim}
\usepackage{amssymb}
\usepackage{upref}
\usepackage[all]{xy}
\usepackage[usenames,dvipsnames]{color}
\usepackage{url}
\usepackage[normalem]{ulem}

\emergencystretch=50pt

\allowdisplaybreaks

\hfuzz=2pt
\vfuzz=2pt

\newtheorem{thm}{Theorem}[section]
\newtheorem*{thm*}{Theorem}
\newtheorem{lem}[thm]{Lemma}
\newtheorem{cor}[thm]{Corollary}
\newtheorem{prop}[thm]{Proposition}

\theoremstyle{definition} %definition
\newtheorem{defn}[thm]{Definition}
\newtheorem{q}[thm]{Question}

\newtheorem{ex}[thm]{Example}
\newtheorem{notn}[thm]{Notation}

\newtheorem*{notn*}{Notation}

\newtheorem*{hyp*}{Hypothesis}

\theoremstyle{remark} %remark
\newtheorem{rem}[thm]{Remark}
\newtheorem*{rem*}{Remark}

\numberwithin{equation}{section}

\newcommand{\secref}[1]{Section~\textup{\ref{#1}}}

\newcommand{\thmref}[1]{Theorem~\textup{\ref{#1}}}
\newcommand{\corref}[1]{Corollary~\textup{\ref{#1}}}
\newcommand{\lemref}[1]{Lemma~\textup{\ref{#1}}}
\newcommand{\propref}[1]{Proposition~\textup{\ref{#1}}}

\newcommand{\defnref}[1]{Definition~\textup{\ref{#1}}}
\newcommand{\remref}[1]{Remark~\textup{\ref{#1}}}
\newcommand{\qref}[1]{Question~\textup{\ref{#1}}}

\newcommand{\exref}[1]{Example~\textup{\ref{#1}}}

\newcommand{\appxref}[1]{Appendix~\textup{\ref{#1}}}

\newcommand{\midtext}[1]{\quad\text{#1}\quad}
\newcommand{\righttext}[1]{\quad\text{#1 }}

\renewcommand{\)}{\textup)}

\newcommand{\KK}{\mathcal K}

\newcommand{\LL}{\mathcal L}
\newcommand{\FF}{\mathcal F}

\newcommand{\TT}{\mathcal T}

\renewcommand{\epsilon}{\varepsilon}

\DeclareMathOperator{\ad}{Ad}

\DeclareMathOperator{\dashind}{\!-Ind}

\DeclareMathOperator{\prim}{Prim}

\DeclareMathOperator*{\spn}{span}
\DeclareMathOperator*{\clspn}{\overline{\spn}}

\newcommand{\id}{\text{\textup{id}}}

\newcommand{\<}{\langle}
\renewcommand{\>}{\rangle}

\newcommand{\inv}{^{-1}}
\newcommand{\iso}{\overset{\cong}{\longrightarrow}}

\renewcommand{\bar}{\overline}
\newcommand{\what}{\widehat}
\newcommand{\wilde}{\widetilde}

\newcommand{\smtx}[1]{\left(\begin{smallmatrix} #1
\end{smallmatrix}\right)}

\newcommand{\ann}{^\perp}
\newcommand{\pann}{{}\ann}

% New category notation (spring 2014):
\newcommand{\cs}{\mathbf{C}^*}
\newcommand{\ac}{\mathbf{Act}}
\newcommand{\co}{\mathbf{Coact}}
\newcommand{\CP}{\textup{CP}}
\newcommand{\cp}{\CP}

\newcommand{\ize}{\text{$E$-ize}}

\begin{document}
\title{Coaction functors}

\author[Kaliszewski]{S. Kaliszewski}
\address{School of Mathematical and Statistical Sciences
\\Arizona State University
\\Tempe, Arizona 85287}
\email{kaliszewski@asu.edu}

\author[Landstad]{Magnus~B. Landstad}
\address{Department of Mathematical Sciences\\
Norwegian University of Science and Technology\\
NO-7491 Trondheim, Norway}
\email{magnusla@math.ntnu.no}

\author[Quigg]{John Quigg}
\address{School of Mathematical and Statistical Sciences
\\Arizona State University
\\Tempe, Arizona 85287}
\email{quigg@asu.edu}

\subjclass[2000]{Primary  46L55; Secondary 46M15}

\keywords{
crossed product,
action,
coaction, 
Fourier-Stieltjes algebra,
exact sequence,
Morita compatible}

%\date{June 15, 2015}
\date{\today}

\begin{abstract}
A certain type of functor on a category of coactions of a locally compact group on $C^*$-algebras is introduced and studied.
These functors are intended to help in the study of the crossed-product functors that have been recently introduced in relation to the Baum-Connes conjecture.
The most important coaction functors are the ones induced by large ideals of the Fourier-Stieltjes algebra.
It is left as an open problem whether the ``minimal exact and Morita compatible crossed-product functor'' is induced by a large ideal.
\end{abstract}
\maketitle

\section{Introduction}
\label{intro}

In \cite{bgwexact}, with an eye toward expanding the class of locally compact groups $G$ for which the Baum-Connes conjecture holds, the authors study ``crossed-product functors'' that take an action of $G$ on a $C^*$-algebra and produce an ``exotic crossed product'' between the full and reduced ones, in a functorial manner.

In \cite{graded}, inspired by \cite{BrownGuentner}, we studied certain quotients of $C^*(G)$ 
that lie ``above'' $C^*_r(G)$ --- namely those that carry a quotient coaction. We characterized these intermediate (which we now call ``large'') quotients as those for which the annihilator $E$, in the Fourier-Stieltjes algebra $B(G)$, of the kernel of the quotient map is a $G$-invariant weak* closed ideal containing the reduced Fourier-Stieltjes algebra $B_r(G)$ (which we now call ``large ideals'' of $B(G)$). We went on to show how, if $\alpha$ is an action of $G$ on a $C^*$-algebra $B$, large ideals $E$ induce 
exotic crossed products
$B\rtimes_{\alpha,E}G$ intermediate between the full and reduced crossed products $B\rtimes_\alpha G$ and $B\rtimes_{\alpha,r} G$. One of the reasons this interested us is the possibility of ``$E$-crossed-product duality'' for a coaction $\delta$ of $G$ on a $C^*$-algebra $A$: namely, that the canonical surjection $\Phi:A\rtimes_\delta G\rtimes_{\what\delta} G\to A\otimes \KK(L^2(G))$ descends to an isomorphism $A\rtimes_\delta G\rtimes_{\what\delta,E} G\cong A\otimes\KK$. Crossed-product duality $A\rtimes_\delta G\rtimes_{\what\delta,r} G\cong A\otimes\KK$ for normal coactions and $A\rtimes_\delta G\rtimes_{\what\delta} G\cong A\otimes\KK$ for maximal coactions are the extreme cases with $E=B_r(G)$ and $B(G)$, respectively. We (rashly) conjectured that every coaction satisfies $E$-crossed-product duality for some $E$, and moreover that the dual coaction on every $E$-crossed product $B\rtimes_{\alpha,E} G$ satisfies $E$-crossed-product duality.

In \cite{BusEch} Buss and Echterhoff disproved the first of the above conjectures, and in \cite{exotic} we proved the second conjecture (also proved independently in \cite{BusEch}).
(Note: in \cite[Introduction]{exotic} we wrote
``We originally wondered whether every coaction satisfies $E$-crossed product duality for some $E$. In \cite[Conjecture~6.12]{graded} we even conjectured that this would be true for dual coactions.'' This is slightly inaccurate --- \cite[Conjecture~6.14]{graded} concerns dual coactions, while \cite[Conjecture~6.12]{graded} says
``Every coaction satisfies $E$-crossed-product duality for some $E$.'')

In \cite[Section~3]{exotic} we showed that every large ideal $E$ of $B(G)$ induces a transformation $(A,\delta)\mapsto (A^E,\delta^E)$ of $G$-coactions, where $A^E=A/A_E$ and
$A_E=\ker (\id\otimes q_E)\circ\delta$, and where in turn $q_E:C^*(G)\to C^*_E(G):=C^*(G)/\pann E$ is the quotient map.

In this paper we further study this assignment $(A,\delta)\mapsto (A^E,\delta^E)$.
When $(A,\delta)=(B\rtimes_\alpha G,\what\alpha)$, the composition
\[
(B,\alpha)\mapsto (B\rtimes_\alpha G,\what\alpha)
\mapsto (B\rtimes_{\alpha,E} G,\what\alpha^E)
\]
was shown to be functorial in \cite[Corollary~6.5]{BusEch};
here we show that $(A,\delta)\mapsto (A^E,\delta^E)$ is functorial, giving an alternate proof of the Buss-Echterhoff result.

In fact, we study more general functors on the category of coactions of $G$,
of which the functors induced by large ideals of $B(G)$ are special cases.
We are most interested in the connection with the crossed-product functors of \cite{bgwexact}.
In particular, we introduce a 
``minimal exact and Morita compatible" coaction functor.
When this functor is composed with the full-crossed-product functor for actions,
the result is a crossed-product functor in the sense of \cite{bgwexact}.
We briefly discuss various possibilities for how these functors are related:
for example, is the composition mentioned in the preceding sentence equal to the minimal exact and Morita compatible crossed product functor of \cite{bgwexact}?
Also, is the greatest lower bound of the coaction functors defined by large ideals itself defined by a large ideal?
These are just two among others that arise naturally from these considerations.
Unfortunately, at this early stage we have more questions than answers.

After a short section on preliminaries, in \secref{categories} we define the categories we will use for our functors.
In numerous previous papers, we have used ``nondegenerate categories'' of $C^*$-algebras and their equivariant counterparts. But these categories are inappropriate for the current paper, primarily due to our need for short exact sequences. Rather, here we must use ``classical'' categories, where the homomorphisms go between the $C^*$-algebras themselves, not into multiplier algebras. In order to avail ourselves of tools that have been developed for the equivariant nondegenerate categories, we include a brief summary of how the basic theory works for the classical categories.
Interestingly, the crossed products are the same in both versions of the categories (see Corollaries~\ref{same coaction product} and \ref{same product}).

In \secref{sec:coaction functor} we define \emph{coaction functors}, which are a special type of functor on the classical category of coactions.
Composing such a coaction functor with the full-crossed-product functor on actions, we get crossed-product functors in the sense of Baum, Guentner, and Willett (\cite{bgwexact}); it remains an open problem whether every such crossed-product functor is of this form.
Maximalization and normalization are examples of coaction functors, but there are lots more --- for example, the functors induced by large ideals of the Fourier-Stieltjes algebra (see \secref{large}).
In \secref{sec:coaction functor}
we also define a partial ordering on coaction functors, and prove in \thmref{glb} that the class of coaction functors is complete in the sense that every nonempty collection of them has a greatest lower bound.
We also introduce the general notions of \emph{exact} or \emph{Morita compatible} coaction functors, and prove in \thmref{glb exact} that they are preserved by greatest lower bounds.
We show in \propref{compose} that our partial order, as well as our exactness and Morita compatibility, are consistent with those of \cite{bgwexact}.

To help prepare for the study of coaction functors associated to large ideals,
in \secref{decreasing} we introduce \emph{decreasing coaction functors},
and show how Morita compatibility takes a particularly simple form for these functors in \propref{decreasing morita}.

In \secref{E coaction functor} we study the coaction functors $\tau_E$ induced by large ideals $E$ of $B(G)$.
Perhaps interestingly, maximalization is not among these functors.
We show that these functors $\tau_E$ are decreasing in \propref{E coaction functor}, and how the test for exactness simplifies significantly for them in \propref{exact E functor}. Moreover, $\tau_E$ is automatically Morita compatible (see \propref{morita}).
Composing maximalization followed by $\tau_E$, we get a related functor that we call \emph{$E$-ization}. We show that these functors are also Morita compatible in \thmref{E morita}. Although $E$-ization and $\tau_E$ have similar properties, they are not naturally isomorphic functors (see \remref{mE}).
The outputs of $E$-ization are precisely the coactions we call \emph{$E$-coactions},
namely those for which \emph{$E$-crossed-product duality} holds \cite[Theorem~4.6]{exotic} (see also \cite[Theorem~5.1]{BusEch}). \thmref{equivalence} shows that $\tau_E$ gives an equivalence of maximal coactions with $E$-coactions.
We close \secref{E coaction functor} with some open problems that mainly concern the application of the coaction functors $\tau_E$ to the theory of \cite{bgwexact}.

Finally, \appxref{module lemmas} supplies a few tools that show how some properties of coactions can be more easily handled using the associated $B(G)$-module structure.

We thank the referee for comments that significantly improved our paper.

\section{Preliminaries}\label{prelim}

We refer to \cite[Appendix~A]{enchilada}, \cite{maximal} for background material on coactions of locally compact groups on $C^*$-algebras, and \cite[Chapters~1--2]{enchilada} for 
imprimitivity bimodules
and their linking algebras.
Throughout, $G$ will denote a locally compact group, and $A,B,C,\dots$ will denote $C^*$-algebras.

Recall from \cite[Definition~1.14]{enchilada} that the \emph{multiplier bimodule} of an $A-B$ imprimitivity bimodule $X$ is defined as $M(X)=\LL_B(B,X)$, where $B$ is regarded as a Hilbert module over itself in the canonical way. Also recall \cite[Corollary~1.13]{enchilada} that $M(X)$ becomes an $M(A)-M(B)$ correspondence in a natural way.
The \emph{linking algebra} of an $A-B$ imprimitivity bimodule $X$ is $L(X)=\smtx{A&X\\\wilde X&B}$,
where $\wilde X$ is the \emph{dual} $B-A$ imprimitivity bimodule.
$A$, $B$, and $X$ are recovered from $L(X)$ via the \emph{corner projections} $p=\smtx{1&0\\0&0},q=\smtx{0&0\\0&1}\in M(L(X))$.
The multiplier algebra of $L(X)$ decomposes as $M(L(X))=\smtx{M(A)&M(X)\\M(\wilde X)&M(B)}$.
We usually omit the lower left corner of the linking algebra, writing $L(X)=\smtx{A&X\\{*}&B}$, since it takes care of itself.
Also recall from \cite[Lemma~1.52]{enchilada} (see also \cite[Remark~(2) on page 307]{er:multipliers}) that nondegenerate homomorphisms of imprimitivity bimodules correspond bijectively to nondegenerate homomorphisms of their linking algebras.

For an action $(A,\alpha)$ of $G$,
we use the following notation for the (full) crossed product $A\rtimes_\alpha G$:
\begin{itemize}
\item $i_A=i_A^\alpha:A\to M(A\rtimes_\alpha G)$
and $i_G=i_G^\alpha:G\to M(A\rtimes_\alpha G)$
comprise the universal covariant homomorphism $(i_A,i_G)$.

\item $\what\alpha$ is the dual coaction on $A\rtimes_\alpha G$.
\end{itemize}

On the other hand, for the reduced crossed product $A\rtimes_{\alpha,r} G$ we use:
\begin{itemize}
\item $\Lambda:A\rtimes_\alpha G\to A\rtimes_{\alpha,r} G$ is the regular representation.

\item $i_A^r=i_A^{\alpha,r}=\Lambda\circ i_A$
and $i_G^r=i_G^{\alpha,r}=\Lambda\circ i_G$
are the canonical maps into $M(A\rtimes_{\alpha,r} G)$.

\item $\what\alpha^n$ is the dual coaction on $A\rtimes_{\alpha,r} G$.
\end{itemize}

We will need to work extensively with morphisms between coactions,
in particular (but certainly not only) with maximalization and normalization.
In the literature, the notation for these maps has not yet stabilized.
Recall that a coaction $(A,\delta)$ is called \emph{normal} if the canonical surjection
$\Phi:A\rtimes_\delta G\rtimes_{\what\delta} G\to A\otimes \KK(L^2(G))$
factors through an isomorphism of the reduced crossed product
$\Phi:A\rtimes_\delta G\rtimes_{\what\delta,r} G\to A\otimes \KK(L^2(G))$,
and \emph{maximal} if $\Phi$ itself is an isomorphism.
One convention is,
for a coaction $(A,\delta)$ of $G$,
to write
\[
q^m_A:(A^m,\delta^m)\to (A,\delta)
\]
for a maximalization,
and
\[
q^n_A:(A,\delta)\to (A^n,\delta^n)
\]
for a normalization.
We will use this convention for maximalization, but we will need the letter ``$q$'' for other similar purposes, and it would be confusing to keep using it for normalization.
Instead, we will use
\[
\Lambda=\Lambda_A:(A,\delta)\to (A^n,\delta^n)
\]
for normalization --- this is supposed to remind us that for crossed products by actions the regular representation
\[
\Lambda:(A\rtimes_\alpha G,\what\alpha) \to (A\rtimes_{\alpha,r} G,\what\alpha^n)
\]\
is a normalization.

\subsection*{$B(G)$-modules}

Every coaction $(A,\delta)$ of $G$ induces $B(G)$-module structures on both $A$ and $A^*$:
for $f\in B(G)$ define
\begin{align*}
&f\cdot a=(\id\otimes f)\circ\delta(a)\righttext{for}a\in A\\
&(\omega\cdot f)(a)=\omega(f\cdot a)\righttext{for}\omega\in A^*,a\in A.
\end{align*}
Many properties of coactions can be handled using these module structures rather than the coactions themselves.
For example (see \appxref{module lemmas}),
letting $(A,\delta)$ and $(B,\epsilon)$ be coactions of $G$:
\begin{enumerate}
\item
A homomorphism
$\phi:A\to B$ is $\delta-\epsilon$ equivariant,
meaning $\epsilon\circ\phi=\bar{\phi\otimes\id}\circ\delta$,
if and only if
\[
\phi(f\cdot a)=f\cdot \phi(a)\righttext{for all}f\in B(G),a\in A.
\]

\item
An ideal $I$ of $A$ is \emph{weakly} $\delta$-invariant,
meaning $I\subset \ker\bar{q\otimes\id}\circ\delta$, where $q:A\to A/I$ is the quotient map,
if and only if
\[
B(G)\cdot I\subset I,
\]
because
the proof of \cite[Lemma~3.11]{graded} shows that
\[
\ker(q\otimes\id)\circ\delta=\{a\in A:B(G)\cdot a\subset I\}.
\]
\end{enumerate}
If $I$ is a weakly $\delta$-invariant ideal of $A$, then in fact $I=\ker(q\otimes\id)\circ\delta$, and the quotient map $q$ is $\delta-\delta^I$ equivariant for a unique coaction $\delta^I$ on $A/I$, which we call the \emph{quotient coaction}.
Since the slice map $\id\otimes f:M(A\otimes C^*(G))\to M(A)$ is strictly continuous \cite[Lemma~1.5]{lprs},
the $B(G)$-module structure extends to $M(A)$, and moreover $m\mapsto f\cdot m$ is strictly continuous on $M(A)$ for every $f\in B(G)$.

\subsection*{Short exact sequences}
Several times we will need the following elementary lemma.

\begin{lem}\label{nine}
Let
\[
\xymatrix{
&0\ar[d]&0\ar[d]&0\ar[d]
\\
0\ar[r]&A_1\ar[r]^{\phi_1}\ar[d]_{\iota_A}
&B_1\ar[r]^{\psi_1}\ar[d]_{\iota_B}
&C_1\ar[r]\ar[d]_{\iota_C}
&0
\\
0\ar[r]&A_2\ar[r]^{\phi_2}\ar[d]_{\pi_A}
&B_2\ar[r]^{\psi_2}\ar[d]_{\pi_B}
&C_2\ar[r]\ar[d]_{\pi_C}
&0
\\
0\ar[r]&A_3\ar[r]^{\phi_3}\ar[d]
&B_3\ar[r]^{\psi_3}\ar[d]
&C_3\ar[r]\ar[d]
&0
\\
&0&0&0
}
\]
be a commutative diagram of $C^*$-algebras,
where the columns and the middle row are exact.
Suppose that the $\iota$'s are inclusions of ideals and the $\pi$'s are quotient maps.
Then the bottom \(interesting\) row is exact if and only if both
\begin{equation}\label{exact 1}
\phi_2(A_1)=\phi_2(A_2)\cap B_1
\end{equation}
and
\begin{equation}\label{exact 2}
\phi_2(A_2)+B_1\supset \psi_2\inv(C_1).
\end{equation}
\end{lem}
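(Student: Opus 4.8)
The plan is to recognize the bottom-row maps as the homomorphisms $\phi_3,\psi_3$ induced on quotients by $\phi_2,\psi_2$, and then to check the three ingredients of exactness (injectivity of $\phi_3$, exactness at $B_3$, surjectivity of $\psi_3$) one at a time, matching two of them to the stated conditions. Since $\iota_B\circ\phi_1=\phi_2\circ\iota_A$ with $\iota_A,\iota_B$ inclusions of ideals, $\phi_2$ carries $A_1$ into $B_1$, so it descends to a well-defined $\phi_3\colon A_3=A_2/A_1\to B_2/B_1=B_3$ with $\phi_3\circ\pi_A=\pi_B\circ\phi_2$; similarly $\psi_3\circ\pi_B=\pi_C\circ\psi_2$.

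First I would dispose of the facts that hold unconditionally. Surjectivity of $\psi_3$ is free: $\pi_C$ and $\psi_2$ are surjective (right column and middle row), so $\psi_3\circ\pi_B=\pi_C\circ\psi_2$ is surjective, forcing $\psi_3$ onto. Likewise $\psi_3\circ\phi_3=0$, since composing with the surjection $\pi_A$ gives $\psi_3\circ\phi_3\circ\pi_A=\pi_C\circ\psi_2\circ\phi_2=0$; hence $\phi_3(A_3)\subseteq\ker\psi_3$ always. Thus exactness of the bottom row reduces to the two remaining claims that $\phi_3$ is injective and that $\ker\psi_3\subseteq\phi_3(A_3)$.

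Next I would match $\phi_3$ injective with \eqref{exact 1}. The kernel of $\phi_3$ consists of the classes $\pi_A(a)$ with $\phi_2(a)\in B_1$, and since $\phi_2$ is injective, $\phi_3$ is injective exactly when $\phi_2(a)\in B_1$ forces $a\in A_1$, i.e.\ when $\phi_2(A_2)\cap B_1\subseteq\phi_2(A_1)$; as the reverse inclusion is automatic, this is precisely \eqref{exact 1}. Then I would match exactness at $B_3$ with \eqref{exact 2}. An element $\pi_B(b)\in\ker\psi_3$ is exactly one with $\psi_2(b)\in C_1$, that is $b\in\psi_2\inv(C_1)$; and $\pi_B(b)\in\phi_3(A_3)$ means $b\in\phi_2(A_2)+B_1$, upon writing $\pi_B(b)=\pi_B(\phi_2(a))$ and using $\ker\pi_B=B_1$. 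Hence $\ker\psi_3\subseteq\phi_3(A_3)$ holds iff $\psi_2\inv(C_1)\subseteq\phi_2(A_2)+B_1$, which is \eqref{exact 2}. Combining the four observations yields the equivalence.

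The steps are routine diagram chases, and the main point demanding care — the one I would write out most explicitly — is the bookkeeping that translates the two set-theoretic conditions into injectivity of $\phi_3$ and exactness at $B_3$, in particular keeping straight which of the inclusions among $\phi_2(A_1)$, $\phi_2(A_2)\cap B_1$, $\psi_2\inv(C_1)$, and $\phi_2(A_2)+B_1$ are automatic and which carry the genuine content. No $C^*$-specific difficulty arises: injective $*$-homomorphisms are isometric, so $\phi_2(A_2)$ is closed, and quotients by closed ideals behave as expected, so the chase is valid verbatim in this category.
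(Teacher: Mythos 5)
Your proposal is correct and follows essentially the same route as the paper's own proof: you dispose of the unconditional facts (surjectivity of $\psi_3$ and the containment $\phi_3(A_3)\subseteq\ker\psi_3$) first, then identify injectivity of $\phi_3$ with \eqref{exact 1} and the reverse containment $\ker\psi_3\subseteq\phi_3(A_3)$ with \eqref{exact 2} via the identifications $\pi_B\inv(\phi_3(A_3))=\phi_2(A_2)+B_1$ and $\pi_B\inv(\ker\psi_3)=\psi_2\inv(C_1)$, exactly as in the paper. Your closing remark that the chase is purely algebraic and needs no $C^*$-specific input is also consistent with how the paper treats it.
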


\begin{proof}
Since $\psi_3\circ\pi_B=\pi_C\circ\psi_2$
and $\psi_B$ and $\phi_2$ are both surjective, 
$\psi_3$ is surjective,
so
the bottom row is automatically exact at $C_3$.

Thus, the only items to consider are exactness of 
the bottom row at $A_3$ and $B_3$,
i.e., whether
$\phi_3$ is injective and $\phi_3(A_3)=\ker\psi_3$.

$\phi_3$ is injective if and only if
$\ker \pi_A=\ker \pi_B\circ\phi_2$,
which, since $\phi_2$ is injective, is equivalent to
\eqref{exact 1}.

Since $\psi_2\circ\phi_2=0$ and $\pi_A$ is surjective, $\psi_3\circ\phi_3=0$,
so $\phi_3(A_3)\subset\ker\psi_3$ automatically.
Since 
$\pi_B$ is surjective,
$\phi_3(A_3)\supset\ker \psi_3$
if and only if
\[
\pi_B\inv(\phi_3(A_3)\supset \pi_B\inv(\ker\psi_3).
\]
Since
$\pi_B\inv(\phi_3(A_3))$ consists of all $b\in B_2$
for which
\[
\pi_B(a)\in \phi_3(A_3)=\phi_3(\pi_A(A_2))=\pi_B(\phi_2(A_2)),
\]
equivalently
for which
\[
b\in \phi_2(A_2)+B_1,
\]
we see that
\[
\pi_B\inv(\phi_3(A_3))=\phi_2(A_2)+B_1.
\]
On the other hand,
\[
\pi_B\inv(\ker\psi_3)=\ker\psi_3\circ \pi_B=\ker \pi_C\circ\psi_2
=(\psi_2)\inv(C_1).
\]
Thus, the bottom row is exact at $B_3$ if and only if
\eqref{exact 2} holds.
\end{proof}

\begin{rem}
In the above lemma, we were interested in characterizing exactness of the bottom (interesting) row of the diagram.
\cite[Lemma~3.5]{bgwexact} does this
in terms of subsets of the spectrum $\what{B_2}$,
which could just as well be done with subsets of
$\prim B_2$,
but we instead did it directly in terms of ideals of $B_2$.
Note that, although the $\iota$'s were inclusion maps of ideals and the $\pi$'s were the associated quotient maps, for technical reasons we did \emph{not} make the analogous assumptions regarding the middle row.

There is a standard characterization from homological algebra,
namely that the bottom row is exact if and only if the top row is ---
this is sometimes called the nine lemma, and is an easy consequence of the snake lemma.
However, this doesn't seem to lead to a simplification of the above proof.
\end{rem}

\section{The categories and functors}\label{categories}

We want to study coaction functors.
Among other things, we want to apply the theory we've developed in \cite{graded, exotic}
concerning large ideals $E$ of $B(G)$.
On the other hand, it is important to us in this paper for 
our theory to be consistent with the crossed-product functors of \cite{bgwexact}.
In particular, we want to be able to apply our coaction functors to short exact sequences.

But now a subtlety arises: some of us working in noncommutative duality for $C^*$-dynamical systems have grown accustomed to doing every thing in the ``nondegenerate'' categories, where the morphisms are nondegenerate homomorphisms into multiplier algebras (possibly preserving some extra structure).
But the maps in a short exact sequence
\[
\xymatrix@C-10pt{
0\ar[r]& I\ar[r]^\phi& A\ar[r]^\psi& B\ar[r]&0
}
\]
are not of this type, most importantly $\phi$.
So, we must replace the nondegenerate category by something else.
We can't just allow arbitrary homomorphisms into multiplier algebras, because they wouldn't be composable.
We can't require ``extendible homomorphisms'' into multiplier algebras, because the inclusion of an ideal won't typically have that property.
Thus, it seems we need to use the ``classical category'' of homomorphisms between the $C^*$-algebras, not into multiplier algebras.
This is what \cite{bgwexact} uses, so presumably our best chance of seamlessly connecting with their work is to do likewise.

Since most of the existing categorical theory of coactions uses nondegenerate categories, it behooves us to establish the basic theory we need in the context of the classical categories, which we do below.

One drawback to this is that the covariant homomorphisms and crossed products can't be constructed using morphisms from the classical $C^*$-category --- so, it seems we have to abandon some of the appealing features of the nondegenerate category.

\begin{defn}
In the \emph{classical category $\cs$ of $C^*$-algebras}, a morphism $\phi:A\to B$ is a *-homomorphism from $A$ to $B$ in the usual sense (no multipliers).
\end{defn}

\begin{defn}
In the \emph{classical category $\co$ of coactions}, a morphism $\phi:(A,\delta)\to (B,\epsilon)$ is a morphism $\phi:A\to B$ in $\cs$ such that
the diagram
\[
\xymatrix@C+10pt{
A \ar[r]^-\delta \ar[d]_\phi
&\wilde M(A\otimes C^*(G)) \ar[d]^{\bar{\phi\otimes\id}}
\\
B \ar[r]_-\epsilon
&\wilde M(B\otimes C^*(G))
}
\]
commutes,
and we call $\phi$ a \emph{$\delta-\epsilon$ equivariant} homomorphism.
\end{defn}

To make sense of the above commuting diagram, recall that for any $C^*$-algebra $C$,
\[
\wilde M(A\otimes C)=\{m\in M(A\otimes C);m(1\otimes C)\cup (1\otimes C)m\subset A\otimes C\},
\]
and that for any homomorphism $\phi:A\to B$ there is a canonical extension to a homomorphism
\[
\bar{\phi\otimes\id}:\wilde M(A\otimes C)\to \wilde M(B\otimes C),
\]
by \cite[Proposition~A.6]{enchilada}.
It is completely routine to verify that $\cs$ and $\co$ are categories, i.e., there are identity morphisms and there is an associative composition.

\begin{rem}
Thus, a coaction is not itself a morphism in the classical category; this will cause no trouble.
\end{rem}

To work in the classical category of coactions, we need to be just a little bit careful with covariant homomorphisms and crossed products.
We write $w_G$ for the unitary element of
$M(C_0(G)\otimes C^*(G))=C_b(G,M^\beta(C^*(G)))$
defined by $w_G(s)=s$, where we have identified $G$ with its canonical image in $M(C^*(G))$,
and where the superscript $\beta$ means that we use the strict topology on $M(C^*(G))$.

\begin{defn}\label{cov def}
A \emph{degenerate covariant homomorphism} of a coaction $(A,\delta)$ to a $C^*$-algebra $B$ is a pair $(\pi,\mu)$,
where 
$\pi:A\to M(B)$ and $\mu:C_0(G)\to M(B)$ are homomorphisms such that
$\mu$ is nondegenerate
and 
the diagram
\[
\xymatrix@C+60pt{
A \ar[r]^-\delta \ar[d]_\pi
&\wilde M(A\otimes C^*(G)) \ar[d]^{\bar{\pi\otimes\id}}
\\
M(B) \ar[r]_-{\ad(\mu\otimes\id)(w_G)\circ (\cdot\otimes 1)}
&M(B\otimes C^*(G))
}
\]
commutes,
where the bottom arrow is the map
$b\mapsto \ad(\mu\otimes\id)(w_G)(b\otimes 1)$.
If $\pi:A\to M(B)$ happens to be nondegenerate, we sometimes refer to $(\pi,\mu)$ as a \emph{nondegenerate covariant homomorphism} for clarity.
\end{defn}

\begin{rem}
The homomorphisms $\pi$ and $\mu$ are not morphisms in the classical category $\cs$; this will cause no trouble, but does present a danger of confusion.
\end{rem}

\begin{rem}
Thus, in our new definition of degenerate covariant homomorphism, we include all the usual nondegenerate covariant homomorphisms, and we add more, allowing the homomorphism $\pi$ of $A$ (but not the homomorphism $\mu$ of $C_0(G)$) to be degenerate.
\end{rem}

\begin{rem}
We wrote $M(B\otimes C^*(G))$, rather than the relative multiplier algebra $\wilde M(B\otimes C^*(G))$,
in the above diagram, because $\bar{\pi\otimes\id}$ will in general not map $\wilde M(A\otimes C^*(G))$ into $\wilde M(B\otimes C^*(G))$
since $\pi$ does not map $A$ into $B$.
\end{rem}

Although we have apparently enlarged the supply of covariant homomorphisms, in some sense we have not.
In \lemref{nd} below we use the following terminology:
given $C^*$-algebras $A\subset B$, the \emph{idealizer} of $A$ in $B$ is $\{b\in B:bA\cup Ab\subset A\}$.

\begin{lem}\label{nd}
Let $(\pi,\mu)$ be a degenerate covariant homomorphism of $(A,\delta)$ to $B$, as in \defnref{cov def}.
Put
\[
B_0=\clspn\{\pi(A)\mu(C_0(G))\}.
\]
Then:
\begin{enumerate}
\item
$B_0=\clspn\{\mu(C_0(G))\pi(A)\}$.

\item
$B_0$ is a $C^*$-subalgebra of $M(B)$.

\item
$\pi$ and $\mu$ map into the idealizer $D$ of $B_0$ in $M(B)$.
Let $\rho:D\to M(B_0)$
be the homomorphism
given by
\[
\rho(m)b_0=mb_0\righttext{for}m\in D\subset M(B),b_0\in B_0\subset B,
\]
and let $\pi_0=\rho\circ\pi:A\to M(B_0)$ and $\mu_0=\rho\circ\mu:C_0(G)\to M(B_0)$.
Then $(\pi_0,\mu_0)$ is a nondegenerate covariant homomorphism of $(A,\delta)$ to $B_0$.

\item
For all $a\in A$ and $f\in C_0(G)$ we have
\[
\pi_0(a)\mu_0(f)=\pi(a)\mu(f)\in B_0.
\]
\end{enumerate}
\end{lem}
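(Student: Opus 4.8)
The plan is to extract from the covariance condition a single commutation relation and then to deduce all four assertions from it by routine manipulations. First I would record the covariance condition in the form it will be used: since $\mu$ is nondegenerate, $\mu\otimes\id$ extends to multipliers and $V:=(\mu\otimes\id)(w_G)$ is a unitary in $M(B\otimes C^*(G))$, and the diagram in \defnref{cov def} says exactly that $\bar{\pi\otimes\id}\circ\delta(a)=V(\pi(a)\otimes 1)V^*$ for all $a\in A$. The crux of the whole lemma is the commutation relation
\[
\mu(f)\pi(a)\in\clspn\{\pi(A)\mu(C_0(G))\}=B_0\righttext{for all}f\in C_0(G),\ a\in A,
\]
which I will call $(\star)$. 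Taking adjoints, $(\star)$ is equivalent to $\pi(a)\mu(f)\in\clspn\{\mu(C_0(G))\pi(A)\}$, so it is nothing but the statement that the two spans in (1) coincide.

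Granting $(\star)$, parts (1) and (2) are immediate. Relation $(\star)$ gives $\clspn\{\mu(C_0(G))\pi(A)\}\subset B_0$, and applying $*$ (using $\pi(a)^*=\pi(a^*)$ and $\mu(f)^*=\mu(\bar f)$) gives $B_0\subset B_0^*=\clspn\{\mu(C_0(G))\pi(A)\}$; hence $B_0=B_0^*=\clspn\{\mu(C_0(G))\pi(A)\}$, which is (1) and shows $B_0$ is self-adjoint. For (2), closure under multiplication reduces, on spanning elements, to $\pi(a)\mu(f)\,\pi(a')\mu(f')$; by $(\star)$ the middle factor $\mu(f)\pi(a')$ lies in $B_0$, and since $\pi(a)\pi(a'')=\pi(aa'')$ and $\mu(f'')\mu(f')=\mu(f''f')$ the whole product lies in $B_0$. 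Thus $B_0$ is a self-adjoint, multiplicatively closed, norm-closed subspace of $M(B)$, i.e.\ a $C^*$-subalgebra.

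For (3), membership in the idealizer $D$ is checked on the generators of $B_0$: the inclusions $\pi(a)B_0\subset B_0$ and $B_0\mu(f)\subset B_0$ are trivial (they use only that $\pi,\mu$ are homomorphisms), while $B_0\pi(a)\subset B_0$ and $\mu(f)B_0\subset B_0$ follow by combining $(\star)$ with those trivial inclusions. Hence $\pi$ and $\mu$ map into $D$ and $\rho$ is a well-defined homomorphism. Note $B_0\subset D$ by (2), and $\rho$ restricts on $B_0$ to the canonical embedding $B_0\hookrightarrow M(B_0)$, so $\rho$ is nondegenerate; that $(\pi_0,\mu_0)$ is covariant is then the routine naturality of the covariance condition, namely that applying $\overline{\rho\otimes\id}$ to the covariance identity for $(\pi,\mu)$ yields the one for $(\pi_0,\mu_0)$. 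Nondegeneracy of $\mu_0$ and of $\pi_0$ follows from part (1): for $b_0=\mu(g)\pi(a)$ one has $\mu_0(e_i)b_0=\mu(e_ig)\pi(a)\to b_0$ in norm along an approximate identity $(e_i)$ of $C_0(G)$, and symmetrically $\pi_0(e_j)\pi(a)\mu(f)=\pi(e_ja)\mu(f)\to\pi(a)\mu(f)$ along an approximate identity of $A$. Part (4) is then immediate: since $\pi(a)\mu(f)\in B_0\subset D$ and $\rho$ is multiplicative and restricts to the embedding on $B_0$, we get $\pi_0(a)\mu_0(f)=\rho(\pi(a)\mu(f))=\pi(a)\mu(f)\in B_0$.

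The main obstacle is $(\star)$ itself, which is exactly the computation underlying the construction of coaction crossed products. I would prove it from the displayed covariance identity by slicing with functionals $f'\in B(G)=C^*(G)^*$ and using the defining property $(\id\otimes f')(w_G)=f'\in C_b(G)=M(C_0(G))$ of $w_G$, together with its behaviour under the comultiplication of $C^*(G)$, which converts $V(\pi(a)\otimes 1)V^*$ into legs of the form $\mu(\cdot)\pi(a)\mu(\cdot)$; the nondegeneracy of the coaction, $\clspn\{\delta(A)(1\otimes C^*(G))\}=A\otimes C^*(G)$, is then what lets one absorb $f$ and land inside $\clspn\{\pi(A)\mu(C_0(G))\}$. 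The point to emphasize is that this standard argument uses only the covariance relation, the nondegeneracy of $\delta$, and the nondegeneracy of $\mu$ --- it never uses nondegeneracy of $\pi$ --- which is precisely why it remains valid for a merely degenerate covariant homomorphism.
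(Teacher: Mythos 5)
Your proposal is correct and follows essentially the same route as the paper: the paper likewise reduces everything to the single relation $\mu(f)\pi(a)\in B_0$ (your $(\star)$, which is the paper's part (1)), proves it by slicing with $B(G)$-functionals against the covariance identity, and then obtains (2)--(4) by the same routine manipulations you give, including the observation that nondegeneracy of $\pi$ is never needed. One correction to your sketch of $(\star)$: the property that lets one ``absorb'' and land in $B_0$ is not the density condition $\clspn\{\delta(A)(1\otimes C^*(G))\}=A\otimes C^*(G)$, but the containment $(1\otimes x)\delta(a)\in A\otimes C^*(G)$, i.e.\ that $\delta$ takes values in $\wilde M(A\otimes C^*(G))$, which is built into the paper's definition of a coaction. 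Concretely, the paper writes $f=g\cdot x$ by Cohen factorization in $A(G)$ (as a nondegenerate $C^*(G)$-module), moves $1\otimes x$ across the covariance identity to form $(1\otimes x)\delta(a)$, and then approximates this \emph{element of} $A\otimes C^*(G)$ in norm by finite sums of elementary tensors; density of the span would not by itself justify approximating the multiplier $(1\otimes x)\delta(a)$ by elementary tensors, so if you used only that hypothesis the absorption step would stall. With that one substitution your argument is exactly the paper's.
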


\begin{proof}
For (1), by symmetry
it suffices to show that for $a\in A$ and $f\in C_0(G)$ we have
\[
\mu(f)\pi(a)\in B_0,
\]
and we use an old trick from \cite[proof of Lemma~2.5]{lprs}:
since $A(G)$ is dense in $C_0(G)$,
it suffices to take $f\in A(G)$,
and then since $A(G)$ is a nondegenerate $C^*(G)$-module via $\<y,g\cdot x\>=\<xy,g\>$ for $x,y\in C^*(G),g\in A(G)$,
by Cohen's Factorization Theorem we can write $f=g\cdot x$.
Then the following approximation suffices:
\begin{align*}
\mu(f)\pi(a)
&=\<(\mu\otimes\id)(w_G),\id\otimes f\>\pi(a)
\\&=\<(\mu\otimes\id)(w_G)(\pi(a)\otimes 1),\id\otimes f\>
\\&=\<\bar{\pi\otimes\id}(\delta(a))(\mu\otimes\id)(w_G),\id\otimes g\cdot x\>
\\&=\<(\pi\otimes\id)((1\otimes x)\delta(a))(\mu\otimes\id)(w_G),\id\otimes g\>
\\&\approx \sum_i\<(\pi\otimes\id)(a_i\otimes x_i)(\mu\otimes\id)(w_G),\id\otimes g\>
\\&\hspace{1in}\text{for finitely many $a_i\in A,x_i\in C^*(G)$}
\\&= \sum_i\<(\pi(a_i)\otimes x_i)(\mu\otimes\id)(w_G),\id\otimes g\>
\\&= \sum_i\pi(a_i)\<(\mu\otimes\id)(w_G),\id\otimes g\cdot x_i\>
\\&= \sum_i\pi(a_i)\mu(g\cdot x_i).
\end{align*}
From (1) it follows that $B_0$ is a $*$-subalgebra of $B$, giving (2).

(3).
It is now clear that
\[
\pi(A)B_0\cup B_0\pi(A)\subset B_0,
\]
and similarly for $\mu$,
so both $\pi$ and $\mu$ map into $D$.
It is also clear that $\pi_0$ and $\mu_0$ map nondegenerately into $M(B_0)$.
The covariance property for $(\pi_0,\mu_0)$ follows quickly from that of $(\pi,\mu)$:
if $a\in A$ then
\begin{align*}
\ad (\mu_0\otimes\id)(w_G)(\pi_0(a)\otimes 1)
&=(\rho\otimes\id)\circ \ad (\mu\otimes\id)(w_G)(\pi(a)\otimes 1)
\\&=(\rho\otimes\id)\circ \bar{\pi\otimes\id}\circ\delta(a)
\\&=\bar{\pi_0\otimes\id}\circ\delta(a).
\end{align*}

(4).
This follows from the construction.
\end{proof}

Let $(A\rtimes_\delta G,j_A,j_G)$ be the usual crossed product of the coaction $(A,\delta)$,
i.e., $(j_A,j_G)$ is a nondegenerate covariant homomorphism of $(A,\delta)$ to $A\rtimes_\delta G$
that is universal in the sense that if $(\pi,\mu)$ is any nondegenerate covariant homomorphism of $(A,\delta)$ to a $C^*$-algebra $B$,
then there is a unique homomorphism
$\pi\times\mu:A\rtimes_\delta G\to M(B)$
such that
\begin{align*}
\pi\times\mu\circ j_A&=\pi\\
\pi\times\mu\circ j_G&=\mu,
\end{align*}
equivalently such that
\begin{equation}\label{universal}
\pi\times\mu\bigl(j_A(a)j_G(f)\bigr)=\pi(a)\mu(f)\righttext{for all}a\in A,f\in C_0(G).
\end{equation}

\begin{cor}\label{same coaction product}
With the above notation, $(j_A,j_G)$ is also universal among degenerate covariant homomorphisms \(in the sense of \defnref{cov def}\).
More precisely:
for any degenerate covariant homomorphism $(\pi,\mu)$ of $(A,\delta)$ to $B$ as in \defnref{cov def},
there is a unique homomorphism $\pi\times\mu:A\rtimes_\delta G\to M(B)$
satisfying \eqref{universal}.
\end{cor}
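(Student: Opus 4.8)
The plan is to reduce to the nondegenerate universal property already available for $(j_A,j_G)$, using \lemref{nd} to trade the possibly-degenerate covariant pair for a nondegenerate one. First I would apply \lemref{nd} to the given degenerate covariant homomorphism $(\pi,\mu)$ of \defnref{cov def}, producing the $C^*$-subalgebra $B_0\subseteq M(B)$ together with the nondegenerate covariant homomorphism $(\pi_0,\mu_0)$ of $(A,\delta)$ into $B_0$, and retaining the crucial bookkeeping identity $\pi_0(a)\mu_0(f)=\pi(a)\mu(f)\in B_0$ from part~(4).

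Next, since $(\pi_0,\mu_0)$ is a \emph{nondegenerate} covariant homomorphism of $(A,\delta)$ into $B_0$, the universal property stated just before this corollary furnishes a unique homomorphism $\pi_0\times\mu_0\colon A\rtimes_\delta G\to M(B_0)$ with $(\pi_0\times\mu_0)\bigl(j_A(a)j_G(f)\bigr)=\pi_0(a)\mu_0(f)$. I would then observe that this homomorphism in fact takes values in $B_0$ itself: using the standard description $A\rtimes_\delta G=\clspn\{j_A(A)j_G(C_0(G))\}$ and continuity, its image is contained in $\clspn\{\pi_0(a)\mu_0(f)\}=\clspn\{\pi(a)\mu(f)\}=B_0$. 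Composing with the inclusion $B_0\hookrightarrow M(B)$ then yields the desired $\pi\times\mu\colon A\rtimes_\delta G\to M(B)$, and \lemref{nd}(4) gives $(\pi\times\mu)\bigl(j_A(a)j_G(f)\bigr)=\pi(a)\mu(f)$, which is exactly \eqref{universal}. Uniqueness follows immediately: any homomorphism into $M(B)$ satisfying \eqref{universal} agrees with $\pi\times\mu$ on the elements $j_A(a)j_G(f)$, whose closed span is all of $A\rtimes_\delta G$, so continuity forces equality.

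The step I expect to require the most care is the transition from $M(B_0)$ to $M(B)$. One cannot simply post-compose $\pi_0\times\mu_0$ with a homomorphism $M(B_0)\to M(B)$, because $B_0$ need not sit nondegenerately inside $M(B)$ --- this is precisely the price of allowing $\pi$ to be degenerate, since $\clspn\{\pi(A)\mu(C_0(G))B\}$ may be a proper ideal of $B$, so no canonical extension of multipliers is available. The device that sidesteps the difficulty is the observation above that the image of $\pi_0\times\mu_0$ already lies in $B_0$, which is a genuine $C^*$-subalgebra of $M(B)$; thus the inclusion $B_0\hookrightarrow M(B)$ does all the work, and no multiplier extension is needed at all.
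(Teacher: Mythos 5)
Your proposal is correct and follows essentially the same route as the paper: apply \lemref{nd} to replace $(\pi,\mu)$ by the nondegenerate pair $(\pi_0,\mu_0)$ into $B_0$, invoke the nondegenerate universal property, observe that the image of $\pi_0\times\mu_0$ lands in $B_0\subset M(B)$ so that no multiplier extension is needed, and get both \eqref{universal} and uniqueness from \lemref{nd}(4) together with the density of $\spn\{j_A(A)j_G(C_0(G))\}$ in $A\rtimes_\delta G$. Your explicit discussion of why one cannot simply post-compose with a map $M(B_0)\to M(B)$ is a helpful elaboration of a point the paper handles silently.
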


\begin{proof}
Let $\pi_0,\mu_0,B_0$ be as in the preceding lemma.
Then we have a unique homomorphism $\pi_0\times\mu_0:A\rtimes_\delta G\to M(B_0)$
such that
\[
\pi_0\times\mu_0\bigl(j_A(a)j_G(f)\bigr)=\pi_0(a)\mu_0(f)\righttext{for all}a\in A,f\in C_0(G).
\]
By construction we have $\pi\times\mu(A\rtimes_\delta G)\subset B_0$.
Since $B_0\subset M(B)$,
we can regard $\pi_0$ as a homomorphism $\pi:A\to M(B)$,
and similarly for $\mu:C_0(G)\to M(B)$.
Then we regard $\pi_0\times\mu_0$ as a homomorphism $\pi\times\mu:A\rtimes_\delta G\to M(B)$,
and trivially \eqref{universal} holds.
Since $\pi_0(a)\mu_0(f)=\pi(a)\mu(f)\in B_0$ for all $a\in A,f\in C_0(G)$,
the homomorphism $\pi\times\mu$ is unique.
\end{proof}

Similarly, and more easily, for actions:

\begin{defn}
In the \emph{classical category $\ac$ of actions}, a morphism $\phi:(A,\alpha)\to (B,\beta)$ is a morphism $\phi:A\to B$ in $\cs$ such that
\[
\beta_s\circ\phi=\phi\circ \alpha_s\righttext{for all}s\in G.
\]
\end{defn}

\begin{defn}\label{cov act}
A \emph{degenerate covariant homomorphism} of an action $(A,\alpha)$ to a $C^*$-algebra is a pair $(\pi,u)$, where $\pi:A\to M(B)$ is a homomorphism and $u:G\to M(B)$ is a strictly continuous unitary homomorphism such that
\[
\pi\circ\alpha_s=\ad u_s\circ\pi\righttext{for all}s\in G.
\]
\end{defn}
We call $(\pi,u)$ \emph{nondegenerate} if $\pi:A\to M(B)$ is.

\begin{lem}\label{nd act}
Let $(\pi,u)$ be a degenerate covariant homomorphism of an action $(A,\alpha)$ to $B$,
and put
\[
B_0=\clspn\{\pi(A)u(C^*(G))\},
\]
where we use the same notation $u$ for the associated nondegenerate homomorphism $u:C^*(G)\to M(B)$.
Then:
\begin{enumerate}
\item
$B_0=\clspn\{u(C^*(G))\pi(A)\}$.

\item
$B_0$ is a $C^*$-subalgebra of $M(B)$.

\item
$\pi$ and $u$ map into the idealizer $D$ of $B_0$ in $M(B)$.
Let $\rho:D\to M(B_0)$
be the homomorphism
given by
\[
\rho(m)b_0=mb_0\righttext{for}m\in D\subset M(B),b_0\in B_0\subset B,
\]
and let $\pi_0=\rho\circ\pi:A\to M(B_0)$ and $u_0=\rho\circ u:G\to M(B_0)$.
Then $(\pi_0,u_0)$ is a nondegenerate covariant homomorphism of $(A,\alpha)$ to $B_0$.

\item
For all $a\in A$ and $c\in C^*(G)$ we have
\[
\pi_0(a)u_0(c)=\pi(a)u(c)\in B_0.
\]
\end{enumerate}
\end{lem}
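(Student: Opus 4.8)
The plan is to follow the template of \lemref{nd}, but to exploit the fact that for actions the covariance relation lets us commute $u$ past $\pi$ directly, with no need for the Cohen-factorization trick. The only real content is part~(1); once that is established, parts (2)--(4) are formal consequences, exactly parallel to the coaction case. The crucial identity is the pointwise covariance $u_s\pi(a)=\pi(\alpha_s(a))u_s$, which plays the role here that the ``old trick'' from \cite{lprs} played in \lemref{nd}.

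For part~(1), by symmetry it suffices to show $u(c)\pi(a)\in B_0$ for all $c\in C^*(G)$ and $a\in A$. By density of $C_c(G)$ in $C^*(G)$ and continuity of multiplication in $M(B)$, I may assume $c\in C_c(G)$. Writing the integrated form $u(c)=\int_G c(s)u_s\,ds$ and applying the covariance relation under the integral gives $u(c)\pi(a)=\int_G c(s)\pi(\alpha_s(a))u_s\,ds$. Since $s\mapsto\pi(\alpha_s(a))$ is norm-continuous and bounded on the compact set $\supp c$, I would approximate this integral, choosing a partition of unity $\{\phi_i\}$ subordinate to a fine cover of $\supp c$ together with sample points $s_i$, by finite sums $\sum_i\pi(\alpha_{s_i}(a))\,u(c\phi_i)$. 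Each summand lies in $\pi(A)u(C_c(G))\subset B_0$, and the approximation error is controlled by the uniform continuity of $s\mapsto\alpha_s(a)$ on $\supp c$. Hence $u(c)\pi(a)\in B_0$, which gives (1).

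Granting (1), part~(2) is immediate: $B_0$ is $*$-closed because $(\pi(a)u(c))^*=u(c^*)\pi(a^*)$ and (1) identifies $\clspn\{u(C^*(G))\pi(A)\}$ with $B_0$; and $B_0$ is closed under multiplication since $(\pi(a)u(c))(\pi(a')u(c'))=\pi(a)\,[u(c)\pi(a')]\,u(c')$, where $u(c)\pi(a')\in B_0$ by (1) and $u$ is multiplicative on $C^*(G)$. For part~(3) the same manipulations yield $\pi(A)B_0\cup B_0\pi(A)\subset B_0$ and $u(C^*(G))B_0\cup B_0u(C^*(G))\subset B_0$, so $\pi$ and $u$ land in the idealizer $D$ and $\rho$ is defined as stated. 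Nondegeneracy of $\pi_0$ and $u_0$ I would check with approximate identities, using the two descriptions of $B_0$ from (1): $\pi(e_\lambda a)u(c)\to\pi(a)u(c)$ for an approximate identity $(e_\lambda)$ of $A$, and $u(f_\mu c)\pi(a)\to u(c)\pi(a)$ for an approximate identity $(f_\mu)$ of $C^*(G)$. The covariance of $(\pi_0,u_0)$ then follows by applying the $*$-homomorphism $\rho$ to the relation $\pi(\alpha_s(a))=(\ad u_s)(\pi(a))$; here I would first observe that each $u_s$ itself lies in $D$, which holds because $u_su(c)=u(L_sc)$ and $u(c)u_s=u(R_sc)$ (left and right translates, still in $C_c(G)$), so $u_s B_0\cup B_0 u_s\subset B_0$. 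Finally part~(4) is immediate from the construction, since $\rho$ restricts to the canonical inclusion $B_0\hookrightarrow M(B_0)$ and $\pi(a)u(c)\in B_0$ by (1).

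The main obstacle is part~(1), and specifically the point that the individual unitaries $u_s$ need not belong to $u(C^*(G))$, so one cannot express $u(c)\pi(a)$ as a finite combination of elements of $\pi(A)u(C^*(G))$ on the nose; the Riemann-type approximation with a partition of unity, justified by uniform continuity of $s\mapsto\alpha_s(a)$ on a compact set, is the step requiring genuine care. Everything following (1) is bookkeeping parallel to \lemref{nd}.
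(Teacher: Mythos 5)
Your proof is correct, and it is worth noting that the paper itself supplies \emph{no} proof of \lemref{nd act}: the lemma is introduced with the words ``Similarly, and more easily, for actions,'' leaving the reader to adapt the proof of \lemref{nd}. Your argument is exactly the natural instantiation of that remark. The structure (everything reduces to part~(1), with (2)--(4) formal consequences) mirrors the coaction proof, but where \lemref{nd} needs the Cohen-factorization/slice-map trick from \cite{lprs} --- because coaction covariance is only available in integrated form via $w_G$ --- you exploit the pointwise covariance relation $u_s\pi(a)=\pi(\alpha_s(a))u_s$ and a Riemann-sum/partition-of-unity approximation of $u(c)\pi(a)=\int_G c(s)\pi(\alpha_s(a))u_s\,ds$ for $c\in C_c(G)$; this is precisely why the action case is ``easier,'' and your error estimate (local norm-continuity of $s\mapsto\alpha_s(a)$ on the compact $\supp c$, $\|u_s\|=1$, bound by $\epsilon\|c\|_1$) is sound. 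Your observation that each $u_s$ lies in the idealizer $D$, via $u_su(c)=u(L_sc)$ and $u(c)u_s=u(R_sc)$, is a genuinely necessary step that the paper's coaction proof has no analogue of (there $\mu$ is already defined on $C_0(G)$), and you handle it correctly. The only point left implicit is strict continuity of $u_0:G\to M(B_0)$, required by \defnref{cov act}; this follows by the same computation, since $s\mapsto u_s\pi(a)u(c)=\pi(\alpha_s(a))u(L_sc)$ is norm-continuous and the $u_s$ are uniformly bounded, so it is routine bookkeeping rather than a gap.
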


Let $(A\rtimes_\alpha G,i_A,i_G)$ be the usual crossed product of the action $(A,\alpha)$,
i.e., $(i_A,i_G)$ is a nondegenerate covariant homomorphism of $(A,\alpha)$ to $A\rtimes_\alpha G$
that is universal in the sense that if $(\pi,u)$ is any nondegenerate covariant homomorphism of $(A,\alpha)$ to a $C^*$-algebra $B$,
then there is a unique homomorphism
$\pi\times u:A\rtimes_\alpha G\to M(B)$
such that
\begin{equation}\label{universal act}
\pi\times u\bigl(i_A(a)i_G(c)\bigr)=\pi(a)u(c)\righttext{for all}a\in A,c\in C^*(G).
\end{equation}

\begin{cor}\label{same product}
With the above notation, $(i_A,i_G)$ is also universal among degenerate covariant homomorphisms \(in the sense of \defnref{cov def}\):
for any degenerate covariant homomorphism $(\pi,u)$ of $(A,\alpha)$ to $B$ as in \defnref{cov act},
there is a unique homomorphism $\pi\times u:A\rtimes_\alpha G\to M(B)$
satisfying \eqref{universal act}.
\end{cor}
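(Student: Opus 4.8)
The plan is to follow the proof of \corref{same coaction product} essentially verbatim, replacing the coaction data by the action data and invoking \lemref{nd act} in place of \lemref{nd}. First I would apply \lemref{nd act} to the given degenerate covariant homomorphism $(\pi,u)$, producing the $C^*$-subalgebra $B_0=\clspn\{\pi(A)u(C^*(G))\}$ of $M(B)$ together with the associated \emph{nondegenerate} covariant homomorphism $(\pi_0,u_0)$ of $(A,\alpha)$ to $B_0$.

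Next I would feed $(\pi_0,u_0)$ into the ordinary universal property of the full crossed product. Since $(\pi_0,u_0)$ is nondegenerate, there is a unique homomorphism $\pi_0\times u_0:A\rtimes_\alpha G\to M(B_0)$ with $\pi_0\times u_0\bigl(i_A(a)i_G(c)\bigr)=\pi_0(a)u_0(c)$ for all $a\in A$ and $c\in C^*(G)$. Because the elements $i_A(a)i_G(c)$ have dense span in $A\rtimes_\alpha G$ and are sent into $B_0$, the entire image of $\pi_0\times u_0$ lies in $B_0$. Using the inclusion $B_0\subset M(B)$, I would then regard $\pi_0\times u_0$ as a homomorphism $\pi\times u:A\rtimes_\alpha G\to M(B)$. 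Part~(4) of \lemref{nd act} gives $\pi_0(a)u_0(c)=\pi(a)u(c)$, so \eqref{universal act} holds by construction.

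Finally, uniqueness is immediate: any homomorphism satisfying \eqref{universal act} is determined on the dense span of the elements $i_A(a)i_G(c)$, hence on all of $A\rtimes_\alpha G$.

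The argument is entirely routine once \lemref{nd act} is in hand, so there is no genuine obstacle; the only point demanding a moment's care is the passage from $M(B_0)$ to $M(B)$. One must observe that $\pi_0\times u_0$ actually takes values in $B_0$ itself, not merely in $M(B_0)$, and it is precisely this observation that licenses composing with the inclusion $B_0\hookrightarrow M(B)$ to obtain a homomorphism landing in $M(B)$.
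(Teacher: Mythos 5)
Your proposal is correct and is precisely the argument the paper intends: the paper proves the coaction analogue (\corref{same coaction product}) this way and states \corref{same product} without proof under the heading ``Similarly, and more easily, for actions,'' so transcribing that proof with \lemref{nd act} in place of \lemref{nd} is exactly the paper's route. Your added observation that the image of $\pi_0\times u_0$ lands in $B_0$ (via the dense span of the elements $i_A(a)i_G(c)$), which justifies viewing it as a map into $M(B)$, matches the paper's ``By construction we have $\pi\times\mu(A\rtimes_\delta G)\subset B_0$'' step, and the uniqueness argument is the same.
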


If $\phi:(A,\delta)\to (B,\epsilon)$ in $\co$,
then a routine adaptation of the usual arguments shows that we get a morphism
\[
\phi\rtimes G=(j_B\circ\phi)\times j_G^B:(A\rtimes_\delta G,\what\delta)\to (B\rtimes_\epsilon G,\what\epsilon)
\]
in $\ac$,
and similarly if $\phi:(A,\alpha)\to (B,\beta)$ in $\ac$ we get a morphism
\[
\phi\rtimes G=(i_B\circ\phi)\times i_G^B:(A\rtimes_\alpha G,\what\alpha)\to (B\rtimes_\beta G,\what\beta)
\]
in $\co$.
Thus we have crossed-product functors between the classical categories of coactions and actions.

It is also routine to verify that 
if $(A,\delta)$ is a coaction then
the canonical surjection
\[
\Phi:A\rtimes_\delta G\rtimes_{\what\delta} G\to A\otimes\KK
\]
is a natural transformation between the double crossed-product functor and stabilization.\footnote{It is completely routine to verify that stabilization $A\mapsto A\otimes\KK$ is a functor on the classical category $\cs$.}

We need to check that normalization and maximalization behave appropriately in the new coaction category.

\subsection*{Maximalization}

A \emph{maximalization} of a coaction $(A,\delta)$ consists of a maximal coaction $(A^m,\delta^m)$ and a surjective morphism $q^m:(A^m,\delta^m)\to (A,\delta)$ in $\co$ such that
\[
q^m\rtimes G:A^m\rtimes_{\delta^m} G\to A\rtimes_\delta G
\]
is an isomorphism.
Existence of maximalizations is established in \cite[Theorem~6.4]{fischer}, \cite[Theorem~3.3]{maximal}.

To make maximalization into a functor on the classical category of coactions, we 
note that the argument of \cite[Proof of Lemma~6.2]{fischer} carries over to give an appropriate version of the universal property:
given coactions $(A,\delta)$ and $(B,\epsilon)$,
with $\epsilon$ maximal,
and a morphism $\phi:(B,\epsilon)\to (A,\delta)$ in $\co$,
there is a unique morphism $\wilde\phi$ in $\co$ making the diagram
\[
\xymatrix@C+10pt{
(B,\epsilon) \ar@{-->}[r]^-{\wilde\phi} \ar[dr]_\phi
&(A^m,\delta^m) \ar[d]^{q^m}
\\
&(A,\delta)
}
\]
commute.
Thus, given a morphism $\phi:(A,\delta)\to (B,\epsilon)$ in $\co$,
there is a unique morphism $\phi^m$ making the diagram
\[
\xymatrix@C+10pt{
(A^m,\delta^m) \ar@{-->}[r]^-{\phi^m} \ar[d]_{q^m_A}
&(B^m,\epsilon^m) \ar[d]^{q^m_B}
\\
(A,\delta) \ar[r]_-\phi
&(B,\epsilon)
}
\]
commute in $\co$. Uniqueness makes the assignments $\phi\mapsto \phi^m$ functorial,
and the \emph{maximalizing maps} $q^m$ give a natural transformation from the maximalization functor to the identity functor.
Also, the universal property implies that the maximalization functor is faithful,
i.e., if $\phi,\psi:(A,\delta)\to (B,\epsilon)$ are distinct morphisms in $\co$,
then the maximalizations $\phi^m,\psi^m:(A^m,\delta^m)\to (B^m,\epsilon^m)$ are also distinct.

\begin{rem}\label{choice}
It is important for us that maximalization is a \emph{functor};
however, when we refer to $(A^m,\delta^m)$ as ``the'' maximalization of a coaction $(A,\delta)$,
we do not have in mind a specific $C^*$-algebra $A^m$, rather we regard the maximalization as being characterized up to isomorphism by its universal properties, but for the purpose of having a functor we imagine that a choice of maximalization has been made for every coaction --- any other choices would give a naturally isomorphic functor.
On the other hand, whenever we have a maximal coaction $(B,\epsilon)$, we may call a morphism $\phi:(B,\epsilon)\to (A,\delta)$ with the defining property \emph{a maximalization} of $(A,\delta)$.
\end{rem}

\subsection*{Normalization}

A \emph{normalization} of a coaction $(A,\delta)$ consists of a normal coaction $(A^n,\delta^n)$ and a surjective morphism $\Lambda:(A,\delta)\to (A^n,\delta^n)$ in $\co$ such that
\[
\Lambda\rtimes G:A\rtimes_{\delta} G\to A^n\rtimes_{\delta^n} G
\]
is an isomorphism.
Existence of normalizations is established in \cite[Proposition~2.6]{fullred}.

To make normalization into a functor on the classical category of coactions, we 
note that
\cite[Lemma~2.1]{maximal} says that,
given a morphism $\phi:(A,\delta)\to (B,\epsilon)$ in $\co$,
there is a unique morphism $\phi^n$ making the diagram
\[
\xymatrix@C+10pt{
(A,\delta) \ar[r]^-{\phi} \ar[d]_{\Lambda_A}
&(B,\epsilon) \ar[d]^{\Lambda_B}
\\
(A^n,\delta^n) \ar@{-->}[r]_-{\phi^n}
&(B^n,\epsilon^n)
}
\]
commute in $\co$. Uniqueness makes the assignments $\phi\mapsto \phi^n$ functorial,
and the \emph{normalizing maps} $\Lambda$ give a natural transformation from the identity functor to the normalization functor.

\begin{rem}
The comments of \remref{choice} can be adapted in an obvious way to normalization,
and also to crossed products, etc.
There are numerous ``natural'' relationships among such functors; for example, maximalization is naturally isomorphic to the composition
\[
(A,\delta)\mapsto (A^n,\delta) \mapsto (A^{nm},\delta^{nm})
\]
of normalization followed by maximalization,
and
the dual coaction $\what\alpha^n$ on the reduced crossed product $A\rtimes_{\alpha,r} G$ of an action $(A,\alpha)$ is naturally isomorphic to the normalization
of the dual coaction $\what\alpha$ on the full crossed product $A\rtimes_\alpha G$
\cite[Proposition~A.61]{enchilada}.
\end{rem}

The normalization $\Lambda:(A,\delta)\to (A^n,\delta^n)$ of a maximal coaction is also a maximalization of the normal coaction $\delta^n$.
It follows that the normalization functor is faithful,
i.e., if $\phi,\psi:(A,\delta)\to (B,\epsilon)$ are distinct morphisms in $\co$,
then the normalizations $\phi^n,\psi^n:(A^n,\delta^n)\to (B^n,\epsilon^n)$ are also distinct.
It follows from this
and surjectivity of
the normalizing maps $\Lambda_A:(A,\delta)\to (A^n,\delta^n)$
that the normalizing maps
are monomorphisms in the category $\co$,
i.e., if $\phi,\psi:(A,\delta)\to (B,\epsilon)$ are distinct morphisms in $\co$,
then the compositions $\Lambda_B\circ \phi,\Lambda_B\circ \psi:(A,\delta)\to (B^n,\epsilon^n)$ are also distinct.\footnote{The analogous fact for the nondegenerate category of coactions is
\cite[Corollary~6.1.20]{bkq1}.}

\subsection*{Exact sequences}

It is crucial for us to note that
in each of the classical categories $\cs$, $\co$, and $\ac$
there is an obvious concept of short exact sequence.
Nilsen \cite{nil:full} develops the basic theory of short exact sequences for coactions and crossed products.
We briefly outline the essential facts here.

\begin{defn}
Let $(A,\delta)$ be a coaction.
An ideal $I$ of $A$ is \emph{strongly $\delta$-invariant}
if
\[
\clspn\{\delta(I)(1\otimes C^*(G))\}=I\otimes C^*(G).
\]
We will normally just write \emph{invariant} to mean strongly invariant.
\end{defn}

Nilsen proves in \cite[Proposition~2.1, Proposition~2.2, Theorem~2.3]{nil:full}
(see also \cite[Proposition~4.8]{lprs})
that, with the conventions of \cite{nil:full}, if $I$ is strongly invariant then:
\begin{enumerate}
\item $\delta$ restricts to a coaction $\delta_I$ on $I$.

\item $I\rtimes_{\delta_I} G$ is (canonically isomorphic to) an ideal of $A\rtimes_\delta G$.

\item $I$ is \emph{weakly} $\delta$-invariant, i.e.,
$\delta$ descends to a coaction $\delta^I$ on $A/I$.

\item
$0\to I\rtimes_{\delta_I} G\to A\rtimes_\delta G\to (A/I)\rtimes_{\delta^I} G\to 0$
is a short exact sequence in the classical category $\cs$.
\end{enumerate}

We 
point out that Nilsen had to do a bit of work to map $I\rtimes_{\delta_I} G$ into $A\rtimes_\delta G$;
in our framework with the classical categories,
we just
note that the inclusion $\phi:I\hookrightarrow A$ is $\delta_I-\delta$ equivariant,
hence gives a morphism in $\co$,
so we can apply the functor $\cp$ to get a morphism
\[
\phi\rtimes G:I\rtimes_{\delta_I} G\to A\rtimes_\delta G\midtext{in}\cs.
\]

\begin{defn}\label{def:exact}
A functor between any two of the categories $\cs, \co, \ac$ is \emph{exact} if it preserves short exact sequences.
\end{defn}

\begin{ex}
The full crossed-product functor
\begin{align*}
(A,\alpha)&\mapsto (A\rtimes_\alpha G,\what\alpha)
\\\phi&\mapsto \phi\rtimes G
\end{align*}
from $\ac$ to $\co$ is exact \cite[Proposition~12]{gre:local}.
However, the reduced crossed-product functor is not exact, due to Gromov's examples of non-exact groups.
\end{ex}

\begin{ex}
The crossed-product functor 
\begin{align*}
(A,\delta)&\mapsto (A\rtimes_\delta G,\what\delta)
\\\phi&\mapsto \phi\rtimes G
\end{align*}
from $\co$ to $\ac$ is exact \cite[Theorem~2.3]{nil:full}.
\end{ex}

\begin{ex}
The stabilization functor
\begin{align*}
A&\mapsto A\otimes\KK
\\\phi&\mapsto \phi\otimes\id
\end{align*}
on $\cs$ is exact.
\end{ex}

\section{Coaction functors}\label{sec:coaction functor}

\cite{bgwexact} defined a \emph{crossed-product} as a functor
$(B,\alpha)\mapsto B\rtimes_{\alpha,\tau} G$,
from the category of actions to the category of $C^*$-algebras,
equipped with natural transformations
\[
\xymatrix{
B\rtimes_\alpha G \ar[r] \ar[d]
&B\rtimes_{\alpha,\tau} G \ar[dl]
\\
B\rtimes_{\alpha,r} G,
}
\]
where the vertical arrow is the regular representation,
such that the horizontal arrow is surjective.

Our predilection is to decompose such a crossed-product functor as a composition
\[
(B,\alpha)\mapsto (B\rtimes_\alpha G,\what\alpha) \mapsto B\rtimes_{\alpha,\tau} G,
\]
where the first arrow is the full crossed product and the second arrow depends only upon the dual coaction $\what\alpha$.
Our approach will require the target $C^*$-algebra $B\rtimes_{\alpha,\tau} G$ to carry a quotient of the dual coaction.
Thus, it is certainly not obvious that our techniques can handle all crossed-product functors of \cite{bgwexact},
because
\cite{bgwexact} do not require their crossed products $B\rtimes_{\alpha,\tau} G$ to have coactions, and
even if they all do, there is no reason to believe that the crossed-product functor factors in this way.
Nevertheless, we think that it is useful to study crossed-product functors that do factor, and thus we can focus upon the second functor, where all the action stays within the realm of coactions.
The following definition is adapted more or less directly from \cite[Definition~2.1]{bgwexact}:

\begin{defn}\label{coaction functor}
A \emph{coaction functor} is a functor 
$\tau:(A,\delta)\mapsto (A^\tau,\delta^\tau)$
on
the category of coactions,
together with
a natural transformation
$q^\tau$ from maximalization to $\tau$
such that for every coaction $(A,\delta)$,
\begin{enumerate}
\item
$q^\tau_A:A^m\to A^\tau$ is surjective, and

\item
$\ker q^\tau_A\subset \ker \Lambda_{A^m}$.
\end{enumerate}
\end{defn}

\begin{ex}
\begin{enumerate}
\item
Maximalization $(A,\delta)\mapsto (A^m,\delta^m)$ is a coaction functor, with natural surjections 
given by the identity maps $\id_{A^m}$.

\item
Normalization $(A,\delta)\mapsto (A^n,\delta^n)$ is a coaction functor, with natural surjections $\Lambda_{A^m}:A^m\to A^n$.

\item
The identity functor is a coaction functor, with  with natural surjections
$q^m_A:A^m\to A$.
\end{enumerate}
\end{ex}

\begin{lem}\label{axiom}
If $\tau$ is a coaction functor, then for every coaction $(A,\delta)$
there is a unique
$\delta^\tau-\delta^n$ equivariant 
surjection $\Lambda^\tau_A$ making the diagram
\begin{equation}\label{Lambda tau}
\xymatrix{
A^m \ar[r]^-{q^\tau_A} \ar[d]_{\Lambda_{A^m}}
&A^\tau \ar@{-->}[dl]^{\Lambda^\tau_A}
\\
A^n
}
\end{equation}
commute.
Moreover, $\Lambda^\tau$ is a natural transformation from $\tau$ to normalization.
\end{lem}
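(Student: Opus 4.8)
The plan is to obtain $\Lambda^\tau_A$ by factoring the normalizing map $\Lambda_{A^m}$ through the surjection $q^\tau_A$, and then to verify equivariance and naturality as separate steps.

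First I would construct the map. Condition (2) of \defnref{coaction functor} gives $\ker q^\tau_A\subset\ker\Lambda_{A^m}$, while condition (1) says $q^\tau_A:A^m\to A^\tau$ is surjective. Hence, by the universal property of the quotient $*$-homomorphism $q^\tau_A$, there is a unique $*$-homomorphism $\Lambda^\tau_A:A^\tau\to A^n$ with $\Lambda^\tau_A\circ q^\tau_A=\Lambda_{A^m}$; this supplies both the existence and the uniqueness of a map making \eqref{Lambda tau} commute. Since $\Lambda_{A^m}$ is surjective (normalizing maps are surjective) and factors through $\Lambda^\tau_A$, the map $\Lambda^\tau_A$ is itself surjective.

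Next I would check that $\Lambda^\tau_A$ is $\delta^\tau-\delta^n$ equivariant, using the $B(G)$-module criterion recorded in the Preliminaries. Here $q^\tau_A:(A^m,\delta^m)\to(A^\tau,\delta^\tau)$ is a morphism in $\co$, being a component of a natural transformation valued in $\co$, hence $\delta^m-\delta^\tau$ equivariant; and $\Lambda_{A^m}:(A^m,\delta^m)\to(A^n,\delta^n)$ is $\delta^m-\delta^n$ equivariant. Given $a\in A^\tau$ and $f\in B(G)$, I write $a=q^\tau_A(b)$ with $b\in A^m$ by surjectivity and compute
\[
\Lambda^\tau_A(f\cdot a)=\Lambda^\tau_A\bigl(q^\tau_A(f\cdot b)\bigr)=\Lambda_{A^m}(f\cdot b)=f\cdot\Lambda_{A^m}(b)=f\cdot\Lambda^\tau_A(a),
\]
so the module criterion yields equivariance.

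Finally, for the naturality claim I would take a morphism $\phi:(A,\delta)\to(B,\epsilon)$ in $\co$ and establish $\Lambda^\tau_B\circ\phi^\tau=\phi^n\circ\Lambda^\tau_A$. Because $q^\tau_A$ is surjective, it suffices to compare the two composites after precomposing with $q^\tau_A$. Using naturality of $q^\tau$ from maximalization to $\tau$ (so that $\phi^\tau\circ q^\tau_A=q^\tau_B\circ\phi^m$) together with the defining relation $\Lambda^\tau_B\circ q^\tau_B=\Lambda_{B^m}$, the left-hand composite becomes $\Lambda_{B^m}\circ\phi^m$, while the defining relation for $\Lambda^\tau_A$ turns the right-hand composite into $\phi^n\circ\Lambda_{A^m}$. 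Thus everything reduces to the identity $\Lambda_{B^m}\circ\phi^m=\phi^n\circ\Lambda_{A^m}$, and I expect this to be the main obstacle: it is the naturality square of the normalizing maps applied to $\phi^m:A^m\to B^m$, combined with the identification of $(A^m)^n$ with $A^n$ and of $(\phi^m)^n$ with $\phi^n$ under the natural isomorphism between normalization-of-maximalization and normalization. This last identification is a consequence of the fact that the maximalizing map $q^m_A$ induces an isomorphism on crossed products, hence an isomorphism on normalizations, which is already part of the functorial machinery set up for maximalization and normalization. Once this compatibility is in place, the two composites agree on the image of the surjection $q^\tau_A$, hence agree everywhere, completing the naturality of $\Lambda^\tau$.
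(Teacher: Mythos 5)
Your proposal is correct and follows essentially the same route as the paper: factor $\Lambda_{A^m}$ through the surjection $q^\tau_A$ using $\ker q^\tau_A\subset\ker\Lambda_{A^m}$, verify equivariance via the $B(G)$-module criterion together with surjectivity of $q^\tau_A$, and obtain naturality by a diagram chase that reduces, again by surjectivity of $q^\tau_A$, to the commutativity of the maximalization and normalization squares. The identity $\Lambda_{B^m}\circ\phi^m=\phi^n\circ\Lambda_{A^m}$ that you single out as the main obstacle is precisely one of the faces the paper's prism diagram takes as given (naturality of the normalizing maps under the identification $A^{mn}\cong A^n$), so your extra care there is a finer level of detail, not a different argument.
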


\begin{proof}
The first statement follows immediately from the definitions.
To verify that $\Lambda^\tau$ is a natural transformation,
we must show that
the homomorphisms $\Lambda^\tau$ 
\begin{enumerate}
\item
are morphisms of coactions, and

\item
are natural.
\end{enumerate}

(1)
In the commuting triangle \eqref{Lambda tau},
we must show that $\Lambda^\tau_A$ is a $B(G)$-module map,
but this follows since $\Lambda_{A^m}$ and $q^\tau_A$ are module maps
and $q^\tau_A$ is surjective.

(2)
For the naturality,
let $\phi:(A,\delta)\to (B,\epsilon)$ be a morphism in the category of coactions.
Consider the diagram
\[
\xymatrix{
A^m \ar[rr]^{\phi^m} \ar[dd]_{\Lambda_{A^m}} \ar[dr]^{q^\tau_A}
&&B^m \ar[dr]^{q^\tau_B} \ar'[d][dd]_(.4){\Lambda_{B^m}}
\\
&A^\tau \ar[rr]^(.3){\phi^\tau} \ar[dl]^(.4){\Lambda^\tau_A}
&&B^\tau \ar[dl]^{\Lambda^\tau_B}
\\
A^n \ar[rr]_-{\phi^n}
&&B^n.
}
\]
We need to know that the lower quadrilateral, with horizontal and southwest arrows, commutes,
and this follows from
surjectivity of $q^\tau_A$ and
commutativity of the other two quadrilaterals and the two triangles.
\end{proof}

\begin{cor}
If $\tau$ is a coaction functor, then in \eqref{Lambda tau} we have:
\begin{enumerate}
\item
$q^\tau:A^m\to A^\tau$ is a maximalization of $\delta^\tau$, and

\item
$\Lambda^\tau:A^\tau\to A^n$ is a normalization of $\delta^\tau$.
\end{enumerate}
\end{cor}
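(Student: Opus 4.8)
\subsection*{Proof proposal}

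The plan is to push everything through the crossed-product functor $\cp\colon\co\to\ac$ and exploit that both maximalization and normalization are, by definition, exactly the surjections that become isomorphisms after crossing with $G$. Applying $\cp$ to the commuting triangle \eqref{Lambda tau} yields a commuting triangle
\[
\xymatrix{
A^m\rtimes_{\delta^m}G \ar[r]^-{q^\tau_A\rtimes G} \ar[d]_{\Lambda_{A^m}\rtimes G}
&A^\tau\rtimes_{\delta^\tau}G \ar[dl]^{\Lambda^\tau_A\rtimes G}
\\
A^n\rtimes_{\delta^n}G
}
\]
in $\ac$. The one external input I would use is that the vertical map $\Lambda_{A^m}$ is a normalization of $(A^m,\delta^m)$, so that by the very definition of normalization $\Lambda_{A^m}\rtimes G$ is an isomorphism.

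For part (1), I would verify the three defining properties of a maximalization of $(A^\tau,\delta^\tau)$. First, $(A^m,\delta^m)$ is maximal by construction. Second, $q^\tau_A$ is a surjective morphism in $\co$: surjectivity is axiom~(1) of \defnref{coaction functor}, and it is a morphism of coactions because $q^\tau$ is a natural transformation from maximalization to $\tau$. The substantive point is that $q^\tau_A\rtimes G$ is an isomorphism. Injectivity is immediate from the triangle: since $(\Lambda^\tau_A\rtimes G)\circ(q^\tau_A\rtimes G)=\Lambda_{A^m}\rtimes G$ is an isomorphism, its right-hand factor $q^\tau_A\rtimes G$ is injective. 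For surjectivity I would use that the image of $q^\tau_A\rtimes G$ is $\clspn\{j_{A^\tau}(q^\tau_A(A^m))j_G(C_0(G))\}$, which equals $\clspn\{j_{A^\tau}(A^\tau)j_G(C_0(G))\}=A^\tau\rtimes_{\delta^\tau}G$ precisely because $q^\tau_A$ is onto. Hence $q^\tau_A\rtimes G$ is a bijective homomorphism, so an isomorphism, and $q^\tau_A$ is a maximalization of $\delta^\tau$.

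For part (2) I would check the three properties of a normalization of $(A^\tau,\delta^\tau)$. The coaction $(A^n,\delta^n)$ is normal by construction, and \lemref{axiom} provides that $\Lambda^\tau_A$ is a surjective $\delta^\tau-\delta^n$ equivariant homomorphism, i.e.\ a surjective morphism in $\co$. It remains to see that $\Lambda^\tau_A\rtimes G$ is an isomorphism, and this now follows from part~(1): in the factorization $\Lambda^\tau_A\rtimes G=(\Lambda_{A^m}\rtimes G)\circ(q^\tau_A\rtimes G)\inv$ both factors are isomorphisms. Thus $\Lambda^\tau_A$ is a normalization of $\delta^\tau$.

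The only genuine obstacle I anticipate is bookkeeping rather than conceptual: confirming that the map labelled $\Lambda_{A^m}$ really is a normalization (so that its crossed product is an isomorphism), which forces the identification of the normalization of the maximal coaction $(A^m,\delta^m)$ with the chosen $(A^n,\delta^n)$, and verifying that a surjection in $\co$ induces a surjection of crossed products via the closed-span description used above. Everything else is diagram chasing through the functoriality recorded in \secref{categories}.
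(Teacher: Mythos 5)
Your proposal is correct and follows essentially the same route as the paper: take crossed products of the triangle \eqref{Lambda tau}, note the vertical map $\Lambda_{A^m}\rtimes G$ is an isomorphism since $\Lambda_{A^m}$ is a normalization, deduce that $q^\tau_A\rtimes G$ is surjective (because $q^\tau_A$ is) and injective (from the factorization through the vertical isomorphism), and conclude the diagonal map is an isomorphism because the other two are. The extra details you supply --- the closed-span argument for surjectivity of the crossed-product map and the identification $A^{mn}\cong A^n$ --- are exactly the bookkeeping the paper leaves implicit.
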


\begin{proof}
Taking crossed products in \eqref{Lambda tau},
we get a commutative diagram
\[
\xymatrix@C+30pt@R+20pt{
A^m\rtimes_{\delta^m} G \ar[r]^-{q^\tau\rtimes G}_-\simeq \ar[d]_{\Lambda\rtimes G}^\simeq
&A^\tau\rtimes_{\delta^\tau} G \ar[dl]^{\Lambda^\tau\rtimes G}_-\simeq
\\
A^n\rtimes_{\delta^n} G,
}
\]
where the horizontal arrow is surjective because $q^\tau$ is, and is injective because of the vertical isomorphism, and then the diagonal arrow is an isomorphism because the other two arrows are.
Thus $q^\tau$ and $\Lambda^\tau$ satisfy the defining properties of maximalization and normalization, respectively.
\end{proof}

\begin{rem}
Caution: it might seem that $\tau$ should factor through the maximalization functor, at least up to natural isomorphism.
This would entail, in particular, that
\[
(A^{m\tau},\delta^{m\tau})\cong (A^\tau,\delta^\tau)\righttext{for every coaction $(A,\delta)$}.
\]
But this is violated with $\tau=\id$.
\end{rem}

\begin{notn}
With the above notation, we define an ideal of $A^m$ by
\[
A^m_\tau:=\ker q^\tau_A.
\]
Note that for the maximalization functor $m$  we have $A^m_m=\{0\}$,
while for the normalization functor $n$ the associated ideal $A^m_n$ is the kernel of the normalization map $\Lambda_{A^m}:A^m\to A^{mn}\cong A^n$.
\end{notn}

\subsection*{Partial ordering of coaction functors}
\cite[see page~8]{bgwexact} defines one crossed-product functor $\sigma$ to be \emph{smaller} than another one $\tau$ if the natural surjection $A\rtimes_{\alpha,\tau} G\to A\rtimes_{\alpha,r} G$ factors through the $\sigma$-crossed product.

We adapt this definition of partial order to coaction functors,
but ``from the top rather than toward the bottom''.

\begin{defn}\label{smaller}
If $\sigma$ and $\tau$ are coaction functors, then
$\sigma$ is \emph{smaller} than $\tau$, written $\sigma\le \tau$, if 
for every coaction $(A,\delta)$ we have
\[
A^m_\tau\subset A^m_\sigma.
\]
\end{defn}

\begin{lem}\label{order}
For coaction functors $\sigma,\tau$, the following are equivalent:
\begin{enumerate}
\item $\sigma\le \tau$.

\item For every coaction $(A,\delta)$ there is a homomorphism $\Gamma^{\tau,\sigma}$ making the diagram
\[
\xymatrix{
A^m \ar[r]^-{q^\tau} \ar[dr]_{q^\sigma}
&A^\tau \ar@{-->}[d]^{\Gamma^{\tau,\sigma}}
\\
&A^\sigma
}
\]
commute.

\item For every coaction $(A,\delta)$ there is a homomorphism $\Gamma^{\tau,\sigma}$ making the diagram
\[
\xymatrix{
&A^\tau \ar[dl]_{\Lambda^\tau} \ar@{-->}[d]^{\Gamma^{\tau,\sigma}}
\\
A^n
&A^\sigma \ar[l]^{\Lambda^\sigma}
}
\]
commute.
\end{enumerate}
Moreover, if the above equivalent conditions hold then $\Gamma^{\tau,\sigma}$ is
unique, is
surjective, and is a natural transformation from $\tau$ to $\sigma$.
\end{lem}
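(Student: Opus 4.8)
The plan is to prove the cycle of implications $(1)\Rightarrow(2)\Rightarrow(3)\Rightarrow(1)$ and then read off the ``moreover'' clauses from the construction in the first implication. The workhorse throughout is that each map $q^\tau_A\colon A^m\to A^\tau$ is a \emph{surjection}, so that a homomorphism out of $A^m$ factors through $q^\tau_A$ precisely when it kills $\ker q^\tau_A=A^m_\tau$, and any such factorization is automatically unique.

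For $(1)\Rightarrow(2)$: assuming $A^m_\tau\subset A^m_\sigma$, i.e. $\ker q^\tau_A\subset\ker q^\sigma_A$, the universal property of the quotient map $q^\tau_A$ produces a unique homomorphism $\Gamma^{\tau,\sigma}\colon A^\tau\to A^\sigma$ with $\Gamma^{\tau,\sigma}\circ q^\tau_A=q^\sigma_A$, which is the content of the triangle in (2). This $\Gamma^{\tau,\sigma}$ is surjective because $q^\sigma_A$ is, and it is a $B(G)$-module map --- hence a morphism in $\co$ --- by exactly the surjectivity-cancellation argument used in \lemref{axiom}(1): for $f\in B(G)$ and $a=q^\tau_A(b)$ one computes $\Gamma^{\tau,\sigma}(f\cdot a)=q^\sigma_A(f\cdot b)=f\cdot\Gamma^{\tau,\sigma}(a)$. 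For $(2)\Rightarrow(3)$: composing $\Gamma^{\tau,\sigma}\circ q^\tau_A=q^\sigma_A$ with $\Lambda^\sigma_A$ and invoking \lemref{axiom} (which gives $\Lambda^\sigma_A\circ q^\sigma_A=\Lambda_{A^m}=\Lambda^\tau_A\circ q^\tau_A$), I get $\Lambda^\sigma_A\circ\Gamma^{\tau,\sigma}\circ q^\tau_A=\Lambda^\tau_A\circ q^\tau_A$, and cancelling the surjection $q^\tau_A$ yields the triangle in (3).

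The main obstacle is $(3)\Rightarrow(1)$, where cancelling $q^\tau_A$ is no longer available. Starting from $\Lambda^\sigma_A\circ\Gamma^{\tau,\sigma}=\Lambda^\tau_A$ and precomposing with $q^\tau_A$, \lemref{axiom} again gives $\Lambda^\sigma_A\circ(\Gamma^{\tau,\sigma}\circ q^\tau_A)=\Lambda_{A^m}=\Lambda^\sigma_A\circ q^\sigma_A$. Both $\Gamma^{\tau,\sigma}\circ q^\tau_A$ and $q^\sigma_A$ are morphisms $(A^m,\delta^m)\to(A^\sigma,\delta^\sigma)$ in $\co$, and $\Lambda^\sigma_A$ is a normalization of $\delta^\sigma$; since normalizing maps are monomorphisms in $\co$, I may cancel $\Lambda^\sigma_A$ on the left to conclude $\Gamma^{\tau,\sigma}\circ q^\tau_A=q^\sigma_A$. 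This forces $\ker q^\tau_A\subset\ker q^\sigma_A$, i.e. $A^m_\tau\subset A^m_\sigma$, which is (1). The delicate point here --- and the reason $\Lambda^\sigma_A$ cannot simply be inverted --- is that $\Lambda^\sigma_A$ is typically far from injective; it is only its monomorphism property in $\co$ (which for normalizing maps follows from faithfulness of normalization together with surjectivity) that saves the argument, and this is why condition (3) must be read in $\co$, i.e. with $\Gamma^{\tau,\sigma}$ equivariant.

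Finally, the ``moreover'' clauses follow from the first implication: uniqueness and surjectivity of $\Gamma^{\tau,\sigma}$ were noted there, and uniqueness also pins down the $\Gamma^{\tau,\sigma}$ occurring in (3), since any such map satisfies $\Gamma^{\tau,\sigma}\circ q^\tau_A=q^\sigma_A$ and is therefore determined on $q^\tau_A(A^m)=A^\tau$. For naturality, given a coaction morphism $\phi\colon(A,\delta)\to(B,\epsilon)$ I would verify $\Gamma^{\tau,\sigma}_B\circ\phi^\tau=\phi^\sigma\circ\Gamma^{\tau,\sigma}_A$ by precomposing with the surjection $q^\tau_A$ and chasing the resulting identity using naturality of $q^\tau$ and $q^\sigma$ together with the defining relations $\Gamma^{\tau,\sigma}\circ q^\tau=q^\sigma$ --- the same cancellation bookkeeping as in \lemref{axiom}(2).
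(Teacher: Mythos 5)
Your proof is correct and follows essentially the same route as the paper's: factoring through the surjection $q^\tau$ for $(1)\Leftrightarrow(2)$, cancelling the surjection $q^\tau$ for $(2)\Rightarrow(3)$, and cancelling the normalizing map $\Lambda^\sigma$ --- as a monomorphism in $\co$ --- for the converse direction. The only differences are cosmetic: you organize the argument as a cycle $(1)\Rightarrow(2)\Rightarrow(3)\Rightarrow(1)$ rather than as two separate equivalences, and you are slightly more explicit than the paper about why $\Gamma^{\tau,\sigma}$ must be read as an equivariant map for the monomorphism cancellation to apply.
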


\begin{proof}
(1) is equivalent to (2) since $A^m_\tau=\ker q^\tau$ and $A^m_\sigma=\ker q^\sigma$.
Moreover, (1) implies that $\Gamma^{\tau,\sigma}$ is unique
and is surjective,
since the maps $q^\tau$ are surjective.

Assume (3).
Consider the combined diagram
\begin{equation}
\begin{split}\label{combined}
\xymatrix@+30pt{
A^m \ar[r]^-{q^t} \ar[dr] ^(.3){q^\sigma} |!{[r];[d]}\hole \ar[d]_{\Lambda_{A^m}}
&A^\tau \ar[dl]_(.3){\Lambda^\tau} \ar[d]^{\Gamma^{\tau,\sigma}}
\\
A^n
&A^\sigma. \ar[l]^{\Lambda^\sigma}
}
\end{split}
\end{equation}
The upper left and lower left triangles commute by definition of coaction functor,
and the lower right triangle commutes by assumption.
Thus the upper right triangle commutes after post-composing with $\Lambda^\sigma$.
Since the latter map is a normalizer,
by \cite[Corollary~6.1.20]{bkq1}
it is a monomorphism in the category of coactions.
Thus the upper right triangle commutes.

Similarly (but more easily), assuming (2),
the lower right triangle in the diagram \eqref{combined} commutes
because it commutes
after pre-composing with the surjection $q^\tau$.

Naturality of $\Gamma^{\tau,\sigma}$ can be proved by essentially the same argument as in \lemref{axiom}.
\end{proof}

The following is a coaction-functor analogue of \cite[Lemma~3.7]{bgwexact}, and we adapt their argument:
\begin{thm}\label{glb}
Every nonempty collection $\TT$ of coaction functors has a greatest lower bound $\sigma$ with respect to the above partial ordering,
characterized by
\[
A^m_\sigma=\clspn_{\tau\in\TT}A^m_\tau
\]
for every coaction $(A,\delta)$.
\end{thm}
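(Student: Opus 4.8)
The plan is to construct $\sigma$ explicitly from the prescribed ideals and then verify it is a coaction functor that is the greatest lower bound. First I would fix the collection $\TT$ and, for each coaction $(A,\delta)$, set $I_A=\clspn_{\tau\in\TT}A^m_\tau$, a closed ideal of the maximalization $A^m$. The first point to check is that $I_A$ is weakly $\delta^m$-invariant. Each $A^m_\tau=\ker q^\tau_A$ is the kernel of the equivariant surjection $q^\tau_A$, hence is weakly invariant, which by the module criterion in \secref{prelim} means $B(G)\cdot A^m_\tau\subset A^m_\tau$. Since the module action $a\mapsto f\cdot a$ is linear and bounded for each $f\in B(G)$, it follows that $B(G)\cdot I_A\subset I_A$, so $I_A$ is weakly invariant and the quotient $A^\sigma:=A^m/I_A$ carries a quotient coaction $\delta^\sigma$ for which the quotient map $q^\sigma_A\colon A^m\to A^\sigma$ is equivariant and surjective. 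Moreover, because each $A^m_\tau\subset\ker\Lambda_{A^m}$ and the latter ideal is closed, we get $I_A\subset\ker\Lambda_{A^m}$, which is axiom~(2) of \defnref{coaction functor}.

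Next I would promote this assignment to a functor. Given a morphism $\phi\colon(A,\delta)\to(B,\epsilon)$ in $\co$, naturality of each $q^\tau$ gives $q^\tau_B\circ\phi^m=\phi^\tau\circ q^\tau_A$, so $\phi^m(A^m_\tau)\subset B^m_\tau$ for every $\tau\in\TT$; since $\phi^m$ is bounded and linear, $\phi^m(I_A)\subset I_B$. Hence $q^\sigma_B\circ\phi^m$ kills $I_A$ and descends to a unique homomorphism $\phi^\sigma\colon A^\sigma\to B^\sigma$ with $q^\sigma_B\circ\phi^m=\phi^\sigma\circ q^\sigma_A$. Surjectivity of $q^\sigma_A$ forces $\phi^\sigma$ to be a $B(G)$-module map, hence $\delta^\sigma-\epsilon^\sigma$ equivariant, and makes the assignments $\phi\mapsto\phi^\sigma$ respect identities and composition, so $\sigma$ is a functor and the $q^\sigma$ form a natural transformation from maximalization to $\sigma$ (the descent square is precisely naturality). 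With $A^m_\sigma=\ker q^\sigma_A=I_A$ this exhibits $\sigma$ as a coaction functor in the sense of \defnref{coaction functor}.

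Finally I would verify the extremal property with respect to the order of \defnref{smaller}. For each $\tau\in\TT$ we have $A^m_\tau\subset I_A=A^m_\sigma$, so $\sigma\le\tau$ and $\sigma$ is a lower bound. If $\rho$ is any coaction functor with $\rho\le\tau$ for all $\tau\in\TT$, then $A^m_\tau\subset A^m_\rho$ for every $\tau$; since $A^m_\rho$ is a closed ideal, $A^m_\sigma=\clspn_{\tau\in\TT}A^m_\tau\subset A^m_\rho$, which says $\rho\le\sigma$. Thus $\sigma$ is the greatest lower bound, with the stated characterization.

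I expect the main obstacle to be the construction step rather than the order-theoretic conclusion: one must confirm that the closed span $I_A$ is genuinely weakly invariant (so that a quotient coaction exists) and that the descended maps $\phi^\sigma$ are simultaneously well defined, equivariant, and functorial. Both hinge on the same two facts --- that each $A^m_\tau$ is a $B(G)$-submodule and that all the relevant maps ($\phi^m$, the module actions, and the quotient maps) are bounded and equivariant --- so that passing to closed spans preserves the required containments. Once $\sigma$ is known to be a coaction functor, the greatest-lower-bound property is immediate from closedness of the ideals $A^m_\rho$.
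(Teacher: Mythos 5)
Your proposal is correct and follows essentially the same route as the paper: define $A^m_\sigma=\clspn_{\tau\in\TT}A^m_\tau$, verify weak invariance via the $B(G)$-module criterion, descend morphisms using surjectivity of the quotient maps to get functoriality and equivariance, and read off the order properties from the ideal containments. The only difference is cosmetic --- you spell out the greatest-lower-bound verification (including that $\rho\le\tau$ for all $\tau$ implies $A^m_\sigma\subset A^m_\rho$ by closedness of $A^m_\rho$), which the paper compresses to ``by construction.''
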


\begin{proof}
Let $(A,\delta)$ be a coaction,
Then the ideal
\[
A^m_\sigma:=\clspn_{\tau\in\TT}A^m_\tau
\]
of $A^m$ is contained in the kernel of the normalization map $\Lambda_{A^m}$.
Put
\[
A^\sigma=A^m/A^m_\sigma,
\]
and let
\[
q^\sigma_A:A^m\to A^\sigma
\]
be the quotient map.

For all $\tau\in \TT$,
$A^m_\tau$ is a weakly $\delta^m$-invariant ideal of $A^m$,
so for all $f\in B(G)$ we have
\[
f\cdot A^m_\tau\subset A^m_\tau\subset A^m_\sigma,
\]
and
it follows that $f\cdot A^m_\sigma\subset A^m_\sigma$,
i.e.,
$A^m_\sigma$ is a weakly $\delta^m$-invariant ideal.
Thus
$q^\sigma$ is equivariant for $\delta^m$ and 
a unique coaction $\delta^\sigma$ on $A^\sigma$.

We now have assignments
\[
(A,\delta)\mapsto (A^\sigma,\delta^\sigma)
\]
on objects,
and we need to handle morphisms.
Thus, let $\phi:(A,\delta)\to (B,\epsilon)$ be a morphism of coactions,
i.e., $\phi:A\to B$ is a $\delta-\epsilon$ equivariant homomorphism.
Since
\[
A^m_\tau\subset (\phi^m)\inv(B^m_\tau)\subset (\phi^m)\inv(B^m_\sigma)
\midtext{for all $\tau\in\TT$,}
\]
we have
\[
\ker q^\sigma_A=
A^m_\sigma=\clspn_{\tau\in\TT}A^m_\tau\subset (\phi^m)\inv(B^m_\sigma)
=\ker q^\sigma_B\circ \phi^m.
\]
Thus there is a unique homomorphism $\phi^\sigma$
making the diagram
\begin{equation}\label{phi tau}
\xymatrix{
A^m \ar[r]^-{\phi^m} \ar[d]_{q^\sigma_A}
&B^m \ar[d]^{q^\sigma_B}
\\
A^\sigma \ar@{-->}[r]_-{\phi^\sigma}
&B^\sigma
}
\end{equation}
commute.
Moreover,
$\phi^\sigma$ is $\delta^\sigma-\epsilon^\sigma$ equivariant
because the other three maps are and $q^\sigma_A$ is surjective.

We need to verify that the assignments $\phi\mapsto \phi^\sigma$ of morphisms are functorial.
Obviously identity morphisms are preserved.
For compositions,
let
\[
\xymatrix{
(A,\delta) \ar[r]^-\phi \ar[dr]_\nu
&(B,\epsilon) \ar[d]^\rho
\\
&(C,\gamma)
}
\]
be a commuting diagram of coactions.
Consider the diagram
\[
\xymatrix{
A^m \ar[rr]^{\phi^m} \ar[dd]_{q^\sigma_A} \ar[dr]_{\nu^m}
&&B^m \ar[dd]^{q^\sigma_B} \ar[dl]^{\rho^m} 
\\
&C^m \ar[dd]^(.3){q^\sigma_C}
\\
A^\tau \ar[rr]|!{[ur];[dr]}\hole^(.3){\phi^\tau} \ar[dr]_{\nu^\tau}
&&B^\tau \ar[dl]^{\rho^\tau}
\\
&C^\tau.
}
\]
The three 
vertical
quadrilaterals and the 
top
triangle commute,
and $q^\sigma_A$ is surjective.
It follows that the 
bottom
triangle commutes,
and we have shown that composition is preserved.

Thus we have a functor $\sigma$ on the category of coactions.
Moreover,
$\sigma$
is a coaction functor,
since
the surjections $q^\sigma$
have small kernels and
the commuting diagram \eqref{phi tau} shows that 
$q^\sigma$ gives
a natural transformation from maximalization to $\sigma$.
By construction, $\sigma$ is a greatest lower bound for $\TT$.
\end{proof}

\subsection*{Exact coaction functors}

As a special case of our general \defnref{def:exact}, we explicitly record:

\begin{defn}\label{exact functor}
A coaction functor $\tau$ is \emph{exact} if for every short exact sequence
\[
\xymatrix{
0 \ar[r] 
&(I,\gamma) \ar[r]^-\phi
&(A,\delta) \ar[r]^-\psi
&(B,\epsilon) \ar[r] &0
}
\]
of coactions the associated sequence
\[
\xymatrix{
0 \ar[r] 
&(I^\tau,\gamma^\tau) \ar[r]^-{\phi^\tau}
&(A^\tau,\delta^\tau) \ar[r]^-{\psi^\tau}
&(B^\tau,\epsilon^\tau) \ar[r] &0
}
\]
is exact.
\end{defn}

\begin{thm}\label{max exact}
The maximalization functor is exact.
\end{thm}

\begin{proof}
Let 
\[
\xymatrix{
0 \ar[r] 
&(I,\gamma) \ar[r]^-\phi
&(A,\delta) \ar[r]^-\psi
&(B,\epsilon) \ar[r] &0
}
\]
be an exact sequence of coactions.
Taking crossed products twice, we get an exact sequence
\[
\xymatrix@C+20pt{
0 \ar[r]
&I\rtimes_\gamma G\rtimes_{\what\gamma} G \ar[r]^{\phi\rtimes G\rtimes G}
&A\rtimes_\delta G\rtimes_{\what\delta} G
\\&{\hphantom{I\rtimes_\gamma G\rtimes_{\what\gamma} G}}
\ar[r]^{\psi\rtimes G\rtimes G}
&B\rtimes_\epsilon G\rtimes_{\what\epsilon} G
\ar[r]&0.
}
\]
Since the identity functor on coactions is a coaction functor, we get an isomorphic sequence
\[
\xymatrix@C+20pt{
0 \ar[r]
&I^m\rtimes_{\gamma^m} G\rtimes_{\what{\gamma^m}} G \ar[r]^{\phi^m\rtimes G\rtimes G}
&A^m\rtimes_{\delta^m} G\rtimes_{\what{\delta^m}} G
\\&{\hphantom{I\rtimes_{\gamma^m} G\rtimes_{\what\gamma^m} G}}
\ar[r]^{\psi^m\rtimes G\rtimes G}
&B^m\rtimes_{\epsilon^m} G\rtimes_{\what{\epsilon^m}} G
\ar[r]&0,
}
\]
which is therefore also exact.
Since the canonical surjection $\Phi$ is a natural transformation from the double crossed-product functor to the stabilization functor, and since the coactions are now maximal, we get an isomorphic sequence
\[
\xymatrix@C+10pt{
0\ar[r]
&I^m\otimes\KK \ar[r]^{\phi^m\otimes\id}
&A^m\otimes\KK \ar[r]^{\psi^m\otimes\id}
&B^m\otimes\KK
\ar[r]&0,
}
\]
which is therefore also exact.
Since $\KK$ is an exact $C^*$-algebra,
\[
(\ker \phi^m)\otimes\KK=\ker (\phi^m\otimes\id)=\{0\},
\]
so $\ker \phi^m=\{0\}$,
and similarly
\[
(\ker \psi^m)\otimes\KK=\ker (\psi^m\otimes\id)=(\phi^m\otimes\id)(I^m\otimes\KK)
=\phi^m(I^m)\otimes\KK,
\]
so, because $\phi^m(I^m)\subset \ker \psi^m$ by functoriality,
we must have $\phi^m(I^m)=\ker \psi^m$.
Therefore
the sequence
\[
\xymatrix{
0\ar[r]
&I^m \ar[r]^{\phi^m}
&A^m \ar[r]^{\psi^m}
&B^m
\ar[r]&0
}
\]
is exact.
\end{proof}

\begin{thm}\label{functor exact}
A coaction functor $\tau$ is exact if and only if
for 
any short exact sequence
\[
\xymatrix{
0\ar[r]&(I,\delta_I)\ar[r]^\phi&(A,\delta)\ar[r]^\psi&(B,\delta^I)\ar[r]&0
}
\]
of coactions, both
\[
\phi^m(I^m_\tau)=\phi^m(I^m)\cap A^m_\tau
\]
and
\[
\phi^m(I^m)+A^m_\tau=(\psi^m)\inv(B^m_\tau)
\]
hold.
\end{thm}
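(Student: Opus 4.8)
The plan is to reduce the statement to \lemref{nine} by assembling an appropriate $3\times 3$ diagram. Fix a short exact sequence of coactions
\[
0\to (I,\delta_I)\xrightarrow{\phi}(A,\delta)\xrightarrow{\psi}(B,\delta^I)\to 0;
\]
I will show that its $\tau$-image is exact if and only if both displayed conditions hold, and since exactness of $\tau$ means precisely that the $\tau$-image of every such sequence is exact, the theorem follows. Since maximalization is exact (\thmref{max exact}), the maximalized sequence
\[
0\to I^m\xrightarrow{\phi^m}A^m\xrightarrow{\psi^m}B^m\to 0
\]
is exact, and I take it as the middle row of a diagram whose three columns are the defining quotient sequences of $\tau$, namely $0\to I^m_\tau\hookrightarrow I^m\xrightarrow{q^\tau_I}I^\tau\to 0$ and the analogous sequences for $A$ and $B$. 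These columns are exact because $A^\tau=A^m/A^m_\tau$ with $q^\tau_A$ the quotient map and $A^m_\tau=\ker q^\tau_A$ an ideal; thus the hypotheses of \lemref{nine} concerning the columns, the middle row, the $\iota$'s (inclusions of ideals), and the $\pi$'s (quotient maps) are met.

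Next I would fill in the array commutatively, the one point needing care. Naturality of the transformation $q^\tau$ gives $q^\tau_A\circ\phi^m=\phi^\tau\circ q^\tau_I$ and $q^\tau_B\circ\psi^m=\psi^\tau\circ q^\tau_A$. Applying the first to $x\in I^m_\tau=\ker q^\tau_I$ yields $q^\tau_A(\phi^m(x))=0$, so $\phi^m(I^m_\tau)\subset A^m_\tau$; the second likewise gives $\psi^m(A^m_\tau)\subset B^m_\tau$. Hence $\phi^m$ and $\psi^m$ restrict to maps on the top row, and, by surjectivity of the vertical quotient maps, the homomorphisms they induce on the bottom quotients are exactly $\phi^\tau$ and $\psi^\tau$. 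The bottom (interesting) row is therefore the $\tau$-image sequence $0\to I^\tau\to A^\tau\to B^\tau\to 0$, whose exactness as $C^*$-algebras is the same as its exactness as coactions since the maps are equivariant.

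Now \lemref{nine} says this bottom row is exact if and only if both
\[
\phi^m(I^m_\tau)=\phi^m(I^m)\cap A^m_\tau\midtext{and}\phi^m(I^m)+A^m_\tau\supset (\psi^m)\inv(B^m_\tau).
\]
The first is already the stated condition. For the second, the reverse inclusion is automatic: $\psi^m\circ\phi^m=0$ gives $\phi^m(I^m)\subset\ker\psi^m\subset(\psi^m)\inv(B^m_\tau)$, while $\psi^m(A^m_\tau)\subset B^m_\tau$ gives $A^m_\tau\subset(\psi^m)\inv(B^m_\tau)$, so $\phi^m(I^m)+A^m_\tau=(\psi^m)\inv(B^m_\tau)$ as claimed. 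The substantive work is thus the diagram bookkeeping that makes \lemref{nine} applicable --- identifying $\tau$'s quotient sequences as the columns and using naturality of $q^\tau$ to produce the restricted and induced maps --- plus the observation that the reverse inclusion comes for free, so that \lemref{nine}'s ``$\supset$'' matches the theorem's ``$=$''. I expect this last bookkeeping, rather than any hard estimate, to be the main (though modest) obstacle.
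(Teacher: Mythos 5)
Your proof is correct and is essentially the paper's own argument: the paper likewise builds the $3\times 3$ diagram with the defining quotient sequences of $\tau$ as columns and the maximalized sequence (exact by \thmref{max exact}) as the middle row, then invokes \lemref{nine}. Your additional bookkeeping --- the naturality checks that $\phi^m,\psi^m$ restrict to the kernels and induce $\phi^\tau,\psi^\tau$, and the observation that the reverse inclusion makes \lemref{nine}'s ``$\supset$'' match the theorem's ``$=$'' --- is exactly what the paper leaves implicit in its ``follows immediately''.
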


\begin{proof}
We have a commutative diagram
\begin{equation}\label{exact diagram}
\xymatrix{
&0\ar[d]&0\ar[d]&0\ar[d]
\\
0\ar[r]&I^m_\tau\ar[d]_{\iota_I}\ar[r]^{\phi^m|}&A^m_\tau\ar[d]_{\iota_A}\ar[r]^{\psi^m|}&B^m_\tau\ar[d]_{\iota_{B}}\ar[r]&0
\\
0\ar[r]&I^m\ar[d]_{q_I}\ar[r]^{\phi^m}&A^m\ar[d]_{q_A}\ar[r]^{\psi^m}&B^m\ar[d]_{q_{B}}\ar[r]&0
\\
0\ar[r]&I^\tau\ar[r]^{\phi^\tau}\ar[d]&A^\tau\ar[r]^{\psi^\tau}\ar[d]&B^\tau\ar[r]\ar[d]&0
\\
&0&0&0,
}
\end{equation}
where the columns are exact by definition,
and the middle row is exact by \thmref{max exact}.
Thus the result follows immediately from \lemref{nine}.
\end{proof}

\subsection*{Morita compatible coaction functors}

If we have coactions $(A,\delta)$ and $(B,\epsilon)$,
and a $\delta-\epsilon$ compatible coaction $\zeta$ on an $A-B$ imprimitivity bimodule $X$,
we'll say that $(X,\zeta)$ is an \emph{$(A,\delta)-(B,\epsilon)$ imprimitivity bimodule}.

\begin{ex}\label{double dual}
The double dual bimodule coaction
\[
(Y,\eta):=\bigl(X\rtimes_\zeta G\rtimes_{\what\zeta} G,\what{\what\zeta}\,\bigr)
\]
is an
\[
\bigl(A\rtimes_\delta G\rtimes_{\what\delta} G,\what{\what\delta}\,\bigr)-
\bigl(B\rtimes_\epsilon G\rtimes_{\what\epsilon} G,\what{\what\epsilon}\,\bigr)
\]
imprimitivity bimodule.
Since the identity functor on coactions is a coaction functor,
$(Y,\eta)$ becomes an
\[
\bigl(A^m\rtimes_{\delta^m} G\rtimes_{\what{\delta^m}} G,\what{\what{\delta^m}}\,\bigr)-
\bigl(B^m\rtimes_{\epsilon^m} G\rtimes_{\what{\epsilon^m}} G,\what{\what{\epsilon^m}}\,\bigr)
\]
imprimitivity bimodule.
Since maximalizations satisfy full-crossed-product duality,
$(Y,\eta)$ becomes, after replacing the double dual coactions by exterior equivalent coactions,
an
\[
(A^m\otimes\KK,\delta^m\otimes_* \id)-(B^m\otimes\KK,\epsilon^m\otimes_* \id)
\]
imprimitivity bimodule
(see \cite[Lemma~3.6]{maximal}).
\end{ex}

We need the following basic lemma, which is probably folklore, 
although we could not find it in the literature.
Our formulation is partially inspired by Fischer's treatment of relative commutants of $\KK$
\cite[Section~3]{fischer}.

\begin{lem}\label{tensor}
Let $A$ and $B$ be $C^*$-algebras, and let $Y$ be an $(A\otimes\KK)-(B\otimes\KK)$ imprimitivity bimodule. Define
\[
X=\{m\in M(Y):(1_A\otimes k)\cdot m=m\cdot (1_B\otimes k)\in Y\text{ for all }k\in\KK\}.
\]
Then:
\begin{enumerate}
\item
$X$ is an $(A\otimes 1_\KK)-(B\otimes 1_\KK)$ submodule of $M(Y)$;

\item
$\clspn\<X,X\>_{M(B\otimes\KK)}=B\otimes 1_\KK$;

\item
$\clspn{}_{M(A\otimes\KK)}\<X,X\>=A\otimes 1_\KK$.
\end{enumerate}
Thus $X$ becomes an $A-B$ imprimitivity bimodule in an obvious way,
and moreover there is a unique
$(A\otimes\KK)-(B\otimes\KK)$ imprimitivity bimodule isomorphism
\[
\theta:X\otimes\KK\iso Y
\]
such that
\[
\theta(m\otimes k)=m\cdot (1_B\otimes k)\righttext{for}m\in X,k\in\KK.
\]
\end{lem}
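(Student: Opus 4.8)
The plan is to recover $X$ as the "$\KK$-fixed multipliers" of $Y$ and show it inherits an imprimitivity bimodule structure over the corner algebras $A \otimes 1_\KK \cong A$ and $B \otimes 1_\KK \cong B$. Throughout I identify $A$ with $A \otimes 1_\KK \subset M(A \otimes \KK)$ and similarly for $B$; the three displayed claims then say precisely that $X$ is a full $A-B$ imprimitivity bimodule sitting inside $M(Y)$.

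First I would verify (1): that $X$ is closed under the left $A \otimes 1_\KK$-action and the right $B \otimes 1_\KK$-action, and under the $M(A \otimes \KK)$-valued and $M(B \otimes \KK)$-valued inner products. The point is that elements of $A \otimes 1_\KK$ commute with $1_A \otimes k$ inside $M(A \otimes \KK)$, so if $m \in X$ and $a \in A$, then $(1_A \otimes k)\cdot (a \cdot m) = a \cdot (1_A \otimes k) \cdot m = a \cdot m \cdot (1_B \otimes k)$, and one checks $a \cdot m \in Y$ using that $a \cdot Y \subset Y$; thus $a \cdot m \in X$. The inner product computations are analogous: for $m,n \in X$ one shows ${}_{M(A \otimes \KK)}\<m,n\>$ commutes with every $1_A \otimes k$, hence lies in the relative commutant, which for $\KK$ is exactly $A \otimes 1_\KK$ (this is the Fischer-style fact about relative commutants of $\KK$).

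The substance is in (2) and (3), the fullness statements; by symmetry I focus on (2). The inclusion $\clspn\<X,X\>_{M(B \otimes \KK)} \subset B \otimes 1_\KK$ follows from the inner-product computation just described. For the reverse inclusion I would produce, for each rank-one $k \in \KK$, enough elements of $X$ to recover $b \otimes 1_\KK$. Concretely, fix a rank-one projection $e \in \KK$; for $y \in Y$ the element $m_y$ defined by $m_y \cdot (1_B \otimes k) = y \cdot (1_B \otimes ek)$ and $(1_A \otimes k) \cdot m_y = (1_A \otimes ke) \cdot y$ should lie in $X$, and computing $\<m_y, m_{y'}\>_{M(B \otimes \KK)}$ for such elements and using fullness of $Y$ over $B \otimes \KK$ should fill out $B \otimes 1_\KK$. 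The \textbf{main obstacle} is exactly this construction: one must show these $\KK$-fixed multipliers genuinely exist as elements of $M(Y)$ (not merely formal symbols) and that they are plentiful enough for fullness, which amounts to a careful multiplier-module argument rather than a one-line citation.

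Finally, for the isomorphism $\theta$, once $X$ is established as a full $A-B$ imprimitivity bimodule, the map $m \otimes k \mapsto m \cdot (1_B \otimes k)$ is checked to preserve both inner products (using that elements of $X$ intertwine the two $\KK$-actions, so the $\KK$ factors combine correctly) and to be $(A \otimes \KK)-(B \otimes \KK)$ linear; being an isometry between imprimitivity bimodules with dense range, it extends to an isomorphism. Surjectivity of $\theta$ is where fullness of $X$ pays off: the range contains all $m \cdot (1_B \otimes k)$, whose closed span is $\clspn\{X \cdot (B \otimes \KK)\} = Y$ by (2). Uniqueness is immediate since $\theta$ is prescribed on the dense set $X \otimes \KK$.
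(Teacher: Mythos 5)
Your part (1) and the inclusions $\clspn\<X,X\>_{M(B\otimes\KK)}\subset B\otimes 1_\KK$ (and its left-handed analogue) are essentially fine, except for one imprecision: commuting with $1\otimes\KK$ alone only places $\<m,n\>$ in $M(B)\otimes 1_\KK$; to land in $B\otimes 1_\KK$ you must also observe that $\<m,n\>(1_B\otimes k)=\<m,n\cdot(1_B\otimes k)\>\in B\otimes\KK$ because $n\cdot(1_B\otimes k)\in Y$. The genuine gap is in the step you yourself flag as the main obstacle, and the construction you propose there does not work. The two formulas $m_y\cdot(1_B\otimes k)=y\cdot(1_B\otimes ek)$ and $(1_A\otimes k)\cdot m_y=(1_A\otimes ke)\cdot y$ are mutually inconsistent: any multiplier must satisfy $\bigl((1_A\otimes k)\cdot m_y\bigr)\cdot(1_B\otimes k')=(1_A\otimes k)\cdot\bigl(m_y\cdot(1_B\otimes k')\bigr)$, which here reads $(1_A\otimes ke)\cdot y\cdot(1_B\otimes k')=(1_A\otimes k)\cdot y\cdot(1_B\otimes ek')$ for all $k,k'$, and that holds only when $(1_A\otimes e)\cdot y=y\cdot(1_B\otimes e)$. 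In the simplest example $A=B=\C$, $Y=\KK$, your formulas demand $m_y=ye$ and $m_y=ey$ simultaneously. Worse, even when such an element exists it generally fails the defining condition of $X$, which requires commutation with \emph{all} $1\otimes k$: in that example $X=\C 1_{M(\KK)}$, and no nonzero compact operator $ye$ is a scalar, so your family produces only $0$ and cannot witness fullness. (For what it's worth, the paper states this lemma as folklore and gives no proof, so there is nothing to compare against; but as written your argument leaves the heart of the lemma unproved.)

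The construction that does work is a strict-sum (``constant diagonal'') argument, not a one-corner compression. Fix matrix units $\{e_{ij}\}$ for $\KK$ with $e_{11}=e$, and for $y\in Y$ set $y_e=(1_A\otimes e)\cdot y\cdot(1_B\otimes e)$ and
\[
m_y=\sum_{i=1}^\infty (1_A\otimes e_{i1})\cdot y_e\cdot(1_B\otimes e_{1i}),
\]
the sum converging strictly in $M(Y)$ (the terms are orthogonal, and compactness of $k$ makes the tails of $m_y\cdot(b\otimes k)$ go to zero in norm). Then $(1_A\otimes e_{kl})\cdot m_y=m_y\cdot(1_B\otimes e_{kl})=(1_A\otimes e_{k1})\cdot y_e\cdot(1_B\otimes e_{1l})\in Y$, so $m_y\in X$ by density of $\spn\{e_{kl}\}$ in $\KK$. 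A direct computation gives $\<m_y,m_{y'}\>_{M(B\otimes\KK)}=b\otimes 1_\KK$ when $\<y_e,y'_e\>_{B\otimes\KK}=b\otimes e$, and since $1\otimes e$ is a full projection and $Y$ is full, such $b$ are dense in $B$; this proves (2), and (3) is symmetric. The same elements give surjectivity of $\theta$: each corner $(1_A\otimes e_{ii})\cdot y\cdot(1_B\otimes e_{jj})$ equals $m_z\cdot(1_B\otimes e_{ij})$ with $z=(1_A\otimes e_{1i})\cdot y\cdot(1_B\otimes e_{j1})$, and these corners sum (strictly) back to $y$, so $\clspn X\cdot(1_B\otimes\KK)=Y$. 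Without this strict-sum argument (or an equivalent corner/linking-algebra identification $X\cong(1\otimes e)Y(1\otimes e)$), the lemma's fullness assertions remain unestablished.
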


\begin{lem}\label{Xm}
Given coactions $(A,\delta)$ and $(B,\epsilon)$,
and a $\delta-\epsilon$ compatible coaction $\zeta$ on an $A-B$ imprimitivity bimodule $X$,
let $(Y,\eta)$ be the
\[
(A^m\otimes\KK,\delta^m\otimes_* \id)-(B^m\otimes\KK,\epsilon^m\otimes_* \id)
\]
imprimitivity bimodule from \exref{double dual},
and let $X^m$ denote the associated $A^m-B^m$ imprimitivity bimodule as in \lemref{tensor},
with an $(A^m\otimes\KK)-(B^m\otimes\KK)$ imprimitivity bimodule isomorphism
$\theta:X^m\otimes\KK\to Y$.
Then there is a unique $\delta^m-\epsilon^m$ compatible coaction $\zeta^m$ on $X^m$ such that
$\theta$ transports $\zeta^m\otimes_*\id$ to $\eta$.
\end{lem}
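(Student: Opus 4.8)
The plan is to translate the problem into one about coactions on \emph{linking algebras}, and then to destabilize.

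First I would assemble the data on $Y$ into a single coaction on its linking algebra. By the standard dictionary between compatible coactions on an imprimitivity bimodule and coactions on the associated linking algebra (\cite{enchilada}), the coactions $\delta^m\otimes_*\id$, $\epsilon^m\otimes_*\id$, and $\eta$ combine into one coaction $\Theta$ on $L(Y)=\smtx{A^m\otimes\KK & Y\\ * & B^m\otimes\KK}$ that is compatible with the two corner projections $p,q\in M(L(Y))$. Since the linking algebra of a stabilized bimodule is canonically the stabilization of the linking algebra, the isomorphism $\theta:X^m\otimes\KK\iso Y$ of \lemref{tensor} induces an isomorphism $L(X^m)\otimes\KK\iso L(Y)$ carrying the corner projections to $p,q$; transporting $\Theta$ back, we may regard it as a corner-compatible coaction on $L(X^m)\otimes\KK$.

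Next I would destabilize: I claim $\Theta$ equals $\Xi\otimes_*\id$ for a unique coaction $\Xi$ on $L(X^m)$. The point is that the elements of $1_{L(X^m)}\otimes\KK\subset M(L(X^m)\otimes\KK)$ are diagonal, with entries in $1_{A^m}\otimes\KK$ and $1_{B^m}\otimes\KK$, and on these $\delta^m\otimes_*\id$ and $\epsilon^m\otimes_*\id$ act trivially, by the very construction of the stabilized coactions; hence $\Theta$ is trivial on $1_{L(X^m)}\otimes\KK$. Recovering $L(X^m)$ inside $M(L(X^m)\otimes\KK)$ exactly as $X^m$ was recovered from $Y$ in \lemref{tensor} (the algebra analogue, in the spirit of Fischer \cite[Section~3]{fischer}), this triviality is precisely what forces the restriction of $\Theta$ to the relative commutant $L(X^m)\otimes 1$ to land back in $\wilde M(L(X^m)\otimes C^*(G))$ and to define there a genuine coaction $\Xi$; uniqueness is automatic, since $\Xi$ must be this restriction.

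Finally I would decompose and conclude. Because the isomorphism $L(X^m)\otimes\KK\iso L(Y)$ respects corner projections and $\Theta$ is corner-compatible, $\Xi$ is corner-compatible on $L(X^m)$, and so by the same dictionary it splits into coactions on the corners $A^m$, $B^m$ together with a compatible coaction $\zeta^m$ on $X^m$. Destabilizing the corners identifies the corner coactions as $\delta^m$ and $\epsilon^m$ (since $\delta^m\otimes_*\id$ and $\epsilon^m\otimes_*\id$ destabilize uniquely to $\delta^m$ and $\epsilon^m$), so $\zeta^m$ is $\delta^m-\epsilon^m$ compatible; reading $\Xi\otimes_*\id=\Theta$ off the upper-right corner shows that $\theta$ transports $\zeta^m\otimes_*\id$ to $\eta$, and uniqueness of $\zeta^m$ follows from that of $\Xi$. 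I expect the only genuinely nonroutine step to be the destabilization: verifying that the restriction of $\Theta$ to $L(X^m)\otimes 1$ really takes values in $\wilde M(L(X^m)\otimes C^*(G))$ and satisfies the coaction identity and nondegeneracy. The triviality of $\Theta$ on $1\otimes\KK$ is exactly what makes this containment hold; the rest is bookkeeping within the well-established bimodule--linking-algebra correspondence.
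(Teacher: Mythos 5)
Your proposal is correct, and it rests on the same key mechanism as the paper's proof --- pull $\eta$ back along $\theta$, then ``destabilize'' by exploiting triviality of the pulled-back coaction on $1\otimes\KK$ together with a relative-commutant argument in the spirit of Fischer --- but the packaging is genuinely different. The paper works directly with the bimodule: it defines $\kappa$ on $X^m\otimes\KK$ as the $\theta$-pullback of $\eta$, and then checks, for each $\xi\in X^m$, that the flipped multiplier $m=(\id_{X^m}\otimes\Sigma)\circ(\theta\otimes\id)\inv\circ\eta\circ\theta(\xi\otimes 1_\KK)$ satisfies the membership criterion of \lemref{tensor}, namely $(1\otimes k)\cdot m=m\cdot(1\otimes k)\in X^m\otimes C^*(G)\otimes\KK$ for all $k\in\KK$, so that $\kappa$ is visibly of the form $\zeta^m\otimes_*\id$. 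You instead assemble the three coactions into one corner-compatible coaction on $L(Y)\cong L(X^m)\otimes\KK$, destabilize at the algebra level via the relative commutant of $1\otimes\KK$, and recover $\zeta^m$ from the upper-right corner of the destabilized coaction. Both routes have exactly one nonroutine step --- verifying that the relevant multipliers land in the unstabilized part, inside $\wilde M(\,\cdot\otimes C^*(G))$ --- and you correctly identify what drives it: triviality on $1\otimes\KK$, which holds because $\bar{(\delta^m\otimes_*\id)}(1\otimes k)=1\otimes k\otimes 1$ (likewise for $\epsilon^m$) and the corner projections are fixed by the linking-algebra coaction. What your route buys is that the destabilization is performed for an \emph{algebra} coaction, where Fischer's machinery \cite[Section~3]{fischer} is already documented, and the bimodule coaction then comes for free from the standard bimodule/linking-algebra dictionary of \cite{enchilada}; the cost is the extra bookkeeping of corner-compatibility and of identifying the destabilized corner coactions with $\delta^m$ and $\epsilon^m$. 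The paper's route avoids linking algebras entirely but must formulate and use the bimodule version of the relative-commutant criterion (its \lemref{tensor}). Your uniqueness argument (transport back and slice) is also sound.
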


\begin{proof}
The diagram
\[
\xymatrix@C+30pt{
X^m\otimes\KK 
\ar@{-->}[r]^-\kappa
\ar[d]_\theta^\simeq
&M(X^m\otimes\KK\otimes C^*(G)) \ar[d]^{\theta\otimes\id}_\simeq
\\
Y \ar[r]_-\eta
&M(Y\otimes C^*(G))
}
\]
certainly has a unique commuting completion, and $\kappa$ is a $(\delta^m\otimes_*\id)-(\epsilon^m\otimes_*\id)$ compatible coaction on $X^m\otimes\KK$.
In order to recognize that $\kappa$ is of the form $\zeta^m\otimes_*\id$,
we need to know that,
letting $\Sigma:\KK\otimes C^*(G)\to C^*(G)\otimes\KK$
be the flip isomorphism,
for every $\xi\in X^m$,
the element
\[
m:=(\id_{X^m}\otimes\Sigma)\circ(\theta\otimes\id)\inv\circ\eta\circ\theta(\xi\otimes 1_\KK)
\]
of the multiplier bimodule
$M(X^m\otimes C^*(G)\otimes \KK)$
is contained in the subset
$M(X^m\otimes C^*(G))\otimes 1_\KK$,
and for this we need only check that for all $k\in\KK$ we have
\[
(1_{A\otimes C^*(G)}\otimes k)\cdot m=m\cdot (1_{B\otimes C^*(G)}\otimes k)
\in X^m\otimes C^*(G)\otimes \KK,
\]
which follows from the properties of the maps involved.
Then it is routine to check that the resulting map $\zeta^m$ is a $\delta^m-\epsilon^m$ compatible coaction on $X^m$.
\end{proof}

\begin{defn}\label{morita functor}
A coaction functor $\tau$ is \emph{Morita compatible} if
whenever $(X,\zeta)$ is an
$(A,\delta)-(B,\epsilon)$ imprimitivity bimodule,
with associated $A^m-B^m$ imprimitivity bimodule $X^m$ as above,
the Rieffel correspondence of ideals satisfies
\begin{equation}\label{induce}
X^m\dashind B^m_\tau=A^m_\tau.
\end{equation}
\end{defn}
We will use without comment the simple observation that
if $(A,\delta)$ (and hence also $(B,\epsilon)$) is maximal,
then we can replace $X^m$ by $X$
and regard the natural surjection $q^\tau_A$ as going from $A$ to $A^\tau$
(and similarly for $B$),
since the maximalizing maps $q^m_A$ and $q^m_B$ can be combined to
give an isomorphism of the $A^m-B^m$ imprimitivity bimodule $X^m$ onto $X$.

\begin{rem}
Caution: \defnref{morita functor} is not a direct analogue of the definition of Morita compatibility in 
\cite[Definition~3.2]{bgwexact}, but it suits our purposes in working with coaction functors, as we will see in \propref{compose}.
\end{rem}

\begin{rem}
\lemref{Xm} says in particular
that maximalization preserves Morita equivalence of coactions.
This
is almost new: it also follows from
first applying the cross-product functor,
noting that the dual actions are ``weakly proper $G\rtimes G$-algebras'' in the sense of \cite{BusEch},
then applying \cite[Corollary~4.6]{BusEch2} with the universal crossed-product norm (denoted by $u$ in \cite{BusEch}).
\end{rem}

\begin{lem}\label{X tau}
A coaction functor $\tau$ is Morita compatible if and only if 
whenever $(X,\zeta)$ is an $(A,\delta)-(B,\epsilon)$ imprimitivity bimodule,
there are an $A^\tau-B^\tau$ imprimitivity bimodule $X^\tau$
and a $q^\tau_A-q^\tau_B$ compatible imprimitivity-bimodule homomorphism $q^\tau_X:X^m\to X^\tau$.
\end{lem}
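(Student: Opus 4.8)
The plan is to pass to linking algebras, where the Rieffel correspondence becomes the statement that the ideals of $L(X^m)=\smtx{A^m&X^m\\{*}&B^m}$ are exactly the balanced ideals $\smtx{I&Y\\{*}&J}$, with $I$ an ideal of $A^m$, $J$ an ideal of $B^m$, and $Y\subset X^m$ a closed sub-bimodule, for which $I$ and $J$ correspond under the Rieffel correspondence of $X^m$ and $Y=\clspn\{X^m\cdot J\}=\clspn\{I\cdot X^m\}$ (see \cite[Chapters~1--2]{enchilada}). I will combine this with the routine fact that a $q^\tau_A-q^\tau_B$ compatible imprimitivity-bimodule homomorphism $q^\tau_X:X^m\to X^\tau$ assembles, together with $q^\tau_A$, $q^\tau_B$ and the dual map $\wilde{q^\tau_X}$, into a $*$-homomorphism $L(q^\tau_X):L(X^m)\to L(X^\tau)$ of linking algebras acting as the given maps on the corners (the classical-category analogue of \cite[Lemma~1.52]{enchilada}).

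For the forward implication, suppose $\tau$ is Morita compatible, so that $X^m\dashind B^m_\tau=A^m_\tau$. Then $A^m_\tau$ and $B^m_\tau$ are Rieffel-corresponding, so $K=\smtx{A^m_\tau&Y\\{*}&B^m_\tau}$ with $Y=\clspn\{X^m\cdot B^m_\tau\}$ is an ideal of $L(X^m)$. The quotient $L(X^m)/K$ is the linking algebra of the Rieffel quotient $X^\tau:=X^m/Y$, which is an $A^\tau-B^\tau$ imprimitivity bimodule (with $A^\tau=A^m/A^m_\tau$ and $B^\tau=B^m/B^m_\tau$). Reading off the corners of the quotient homomorphism $L(X^m)\to L(X^m)/K$ yields exactly $q^\tau_A$ and $q^\tau_B$ on the diagonal and a $q^\tau_A-q^\tau_B$ compatible surjection $q^\tau_X:X^m\to X^\tau$ off the diagonal, as desired.

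For the reverse implication, suppose we are given $X^\tau$ and $q^\tau_X$. These assemble into a $*$-homomorphism $L(q^\tau_X):L(X^m)\to L(X^\tau)$, whose kernel is an ideal of $L(X^m)$. Its upper-left and lower-right corners are precisely $\ker q^\tau_A=A^m_\tau$ and $\ker q^\tau_B=B^m_\tau$; since these are the corner ideals of an honest ideal of the linking algebra, the characterization above forces them to correspond under the Rieffel correspondence of $X^m$, that is, $X^m\dashind B^m_\tau=A^m_\tau$. This is exactly Morita compatibility, and the argument never needs a separate proof that $q^\tau_X$ is surjective.

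I expect the one step needing genuine care to be verifying that the imprimitivity-bimodule homomorphism axioms for $q^\tau_X$ are exactly what makes $L(q^\tau_X)$ multiplicative and $*$-preserving, and dually that the corner maps of the quotient homomorphism in the forward direction really recover $q^\tau_A$, $q^\tau_B$ and a compatible $q^\tau_X$. Everything else is bookkeeping with the ideal structure of linking algebras, and Morita compatibility enters only through the single identity $X^m\dashind B^m_\tau=A^m_\tau$.
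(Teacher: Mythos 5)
Your proposal is correct, but it routes both implications through linking algebras, whereas the paper argues directly at the bimodule level, so the two proofs lean on different key lemmas. For the implication that existence of $(X^\tau,q^\tau_X)$ gives Morita compatibility, the paper is a one-liner: it cites \cite[Lemma~1.20]{enchilada}, which says precisely that the kernels of a compatible imprimitivity-bimodule homomorphism are Rieffel-corresponding ideals, so $X^m\dashind B^m_\tau=X^m\dashind\ker q^\tau_B=\ker q^\tau_A=A^m_\tau$. Your linking-algebra argument --- that $\ker L(q^\tau_X)$ is an ideal of $L(X^m)$ whose diagonal corners are $\ker q^\tau_A$ and $\ker q^\tau_B$, and that the corners of any ideal of a linking algebra Rieffel-correspond --- in effect re-proves that lemma rather than citing it. For the converse, the paper forms the Rieffel quotient $X^\tau:=X^m/(X^m\cdot B^m_\tau)$ directly and then composes with the unique isomorphisms $A^m/A^m_\tau\cong A^\tau$ and $B^m/B^m_\tau\cong B^\tau$ so that the quotient map becomes $q^\tau_A-q^\tau_B$ compatible; your quotient of $L(X^m)$ by the balanced ideal $\smtx{A^m_\tau&Y\\{*}&B^m_\tau}$ is the same construction in linking-algebra clothing, and you should include the same identification step, since the corner maps of your quotient homomorphism land in $A^m/A^m_\tau$ and $B^m/B^m_\tau$, not literally in $A^\tau$ and $B^\tau$. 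What your route buys is a single self-contained mechanism, the ideal structure of linking algebras, handling both directions at once; what it costs is that the characterization you invoke --- that ideals of $L(X^m)$ are exactly the balanced ideals with Rieffel-corresponding corners and $Y=\clspn\{X^m\cdot B^m_\tau\}$ --- is not stated in this form in \cite[Chapters~1--2]{enchilada}, so you should either supply the short proof (for an ideal $K$ and corner projections $p,q\in M(L(X^m))$ one has $K=pKp+pKq+qKp+qKq$, and inclusions such as $(pKq)(pKq)^*\subseteq pKp$ and $(pLq)(qKq)(qLp)\subseteq pKp$ show the corners correspond) or locate a precise reference.
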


\begin{proof}
Given $X^\tau$ and $q^\tau_X$ with the indicated properties,
by \cite[Lemma~1.20]{enchilada} we have
\[
X^m\dashind B_\tau^m
=X^m\dashind \ker q^\tau_B
=\ker q^\tau_A
=A^m_\tau.
\]
It follows that $\tau$ is Morita compatible.

Conversely, suppose $\tau$ is Morita compatible,
and let $(X^m,\zeta^m)$ be as above.
Then by the Rieffel correspondence,
$X^\tau:=X^m/X^m\cdot B_\tau^m$
is an $A^m/A^m_\tau-B^m/B^m_\tau$ imprimitivity bimodule,
and the quotient map
$q^\tau_X:X^m\to X^\tau$
is compatible with the quotient maps $A^m\mapsto A^m/A^m_\tau$ and $B^m\mapsto B^m_\tau$.
Via the unique isomorphisms making the diagrams
\[
\begin{aligned}
\xymatrix{
A^m \ar[d]_{\txt{quotient\\map}} \ar[dr]^{q^\tau_A}
\\
A^m/A^m_\tau \ar@{-->}[r]_-\simeq
&A^\tau
}
&
\xymatrix{
B^m \ar[d]_{\txt{quotient\\map}} \ar[dr]^{q^\tau_B}
\\
B^m/B^m_\tau \ar@{-->}[r]_-\simeq
&B^\tau
}
\end{aligned}
\]
commute, $q^\tau_X$ becomes $q^\tau_A-q^\tau_B$ compatible.
\end{proof}

\begin{ex}
It follows trivially that the maximalization functor is Morita compatible.
\end{ex}

\begin{lem}\label{id}
The identity functor on coactions is Morita compatible.
\end{lem}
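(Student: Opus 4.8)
The plan is to check the criterion of \lemref{X tau} for the identity functor. Since $A^{\id}=A$, $B^{\id}=B$, and the natural surjections are the maximalizing maps $q^m_A$ and $q^m_B$, that criterion asks us, given an $(A,\delta)-(B,\epsilon)$ imprimitivity bimodule $(X,\zeta)$ with associated $A^m-B^m$ bimodule $X^m$ as in \lemref{Xm}, to produce an $A-B$ imprimitivity bimodule $X^{\id}$ together with a $q^m_A-q^m_B$ compatible imprimitivity-bimodule homomorphism $q^m_X\colon X^m\to X^{\id}$. I would take $X^{\id}:=X$, so that the required map $q^m_X\colon X^m\to X$ is a maximalizing map at the level of bimodules; producing it is the whole content of the lemma.

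To construct $q^m_X$, I would run through the double-dual picture of \exref{double dual} and \lemref{Xm}. Applying the natural transformation $\Phi$ to the linking-algebra coaction of $(X,\zeta)$ and reading off the corners produces a $\Phi_A-\Phi_B$ compatible imprimitivity-bimodule surjection $\Phi_X\colon X\rtimes_\zeta G\rtimes_{\what\zeta} G\to X\otimes\KK$; the compatibility is automatic because $\Phi_X$ is a corner of an honest homomorphism, the corner projections being invariant and hence preserved by all functors involved and by $\Phi$. Now recall from \lemref{Xm} the isomorphism $\theta\colon X^m\otimes\KK\iso Y$, where $Y=X\rtimes_\zeta G\rtimes_{\what\zeta} G$ is regarded as an $(A^m\otimes\KK)-(B^m\otimes\KK)$ bimodule via the maximal-duality identifications. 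By naturality of $\Phi$, the isomorphism $A^m\otimes\KK\to A\rtimes_\delta G\rtimes_{\what\delta} G$ used in \exref{double dual}, followed by $\Phi_A$, is exactly $q^m_A\otimes\id$, and similarly on the right. Hence the composite $\Phi_X\circ\theta\colon X^m\otimes\KK\to X\otimes\KK$ is a $(q^m_A\otimes\id)-(q^m_B\otimes\id)$ compatible imprimitivity-bimodule homomorphism.

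It remains to recognize $\Phi_X\circ\theta$ as $q^m_X\otimes\id$ for a unique homomorphism $q^m_X\colon X^m\to X$, and this is the step I expect to be the main obstacle. It is the bimodule analogue of the verification, at the end of the proof of \lemref{Xm}, that a given map has the form $\zeta^m\otimes_*\id$: using the description of $X^m$ and $X$ as the $1\otimes\KK$-central multipliers inside $M(X^m\otimes\KK)$ and $M(X\otimes\KK)$ from \lemref{tensor}, one checks that the strict extension of $\Phi_X\circ\theta$ carries $X^m$ into $X$, and that the resulting restriction $q^m_X$ satisfies $\Phi_X\circ\theta=q^m_X\otimes\id$. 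The same $1\otimes\KK$-centrality bookkeeping shows that $q^m_X$ inherits $q^m_A-q^m_B$ compatibility from $\Phi_X\circ\theta$. With $q^m_X\colon X^m\to X$ in hand, \lemref{X tau} immediately gives that the identity functor is Morita compatible.
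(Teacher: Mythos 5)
Your proposal is correct, but it constructs the key map $q^m_X\colon X^m\to X$ by a genuinely different route from the paper. The paper's proof is essentially a one-liner at the level of linking algebras: it applies the maximalization functor to the linking algebra $L$ of $X$, asserts that $X^m$ sits as the upper-right corner of $L^m$, and obtains $q^m_X$ by restricting the maximalization map $q^m_L\colon L^m\to L$ to that corner (the corner projections being preserved). You never maximalize $L$ at all: you work directly with the model of $X^m$ furnished by \lemref{Xm}, namely the $1\otimes\KK$-central multipliers of the double dual $Y=X\rtimes_\zeta G\rtimes_{\what\zeta} G$, and you de-stabilize the corner $\Phi_X$ of the canonical surjection for the linking-algebra coaction, composed with $\theta$. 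Your identification of the coefficient maps as $q^m_A\otimes\id$ and $q^m_B\otimes\id$ is correct: naturality of $\Phi$ gives $\Phi_A\circ(q^m_A\rtimes G\rtimes G)=(q^m_A\otimes\id)\circ\Phi_{A^m}$, and the identification used in \exref{double dual} is precisely $(q^m_A\rtimes G\rtimes G)\circ\Phi_{A^m}^{-1}$. The de-stabilization step you flag as the main obstacle does go through: $\Phi_X\circ\theta$ is surjective, so its linking-algebra homomorphism extends strictly to multipliers; the extension is unital on coefficients, hence carries $1\otimes\KK$-central elements to $1\otimes\KK$-central elements, and the resulting $q^m_X$ satisfies $\Phi_X\circ\theta=q^m_X\otimes\id$ on the dense subspace spanned by $X^m\cdot(1\otimes\KK)$ --- exactly the bookkeeping at the end of the proof of \lemref{Xm}, as you say, and the same mechanism yields the $q^m_A$-$q^m_B$ compatibility needed for \lemref{X tau}. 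As for the trade-off: the paper's argument is shorter but leans on the unproved identification of the corner of $L^m$ with the $X^m$ of \lemref{Xm} (dismissed there as ``simple algebraic manipulations''), whereas yours is longer but works with that specific model of $X^m$ from the start, so no such identification is needed and every compatibility is forced by naturality of $\Phi$.
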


\begin{proof}
Let $(X,\zeta)$ be an $(A,\delta)-(B,\epsilon)$ imprimitivity bimodule,
and let $(X^m,\zeta^m)$ be the associated $(A^m,\delta^m)-(B^m,\epsilon^m)$ imprimitivity bimodule from \lemref{Xm}.
By 
\lemref{X tau}
it suffices to find a
$q^m_A-q^m_B$ compatible
imprimitivity-bimodule homomorphism
$q^m_X:X^m\to X$.
Now, $X^m$ is the upper right corner of the $2\times 2$ matrix representation of the linking algebra $L^m$,
and the maximalization map $q^m_L$ of the linking algebra $L$ of $X$ preserves the upper right corners. Thus $q^m_L$ takes $X^m$ onto $X$, and simple algebraic manipulations show that it has the right properties.
\end{proof}

\begin{thm}\label{glb exact}
The greatest lower bound of the collection of all exact and Morita compatible coaction functors is itself exact and Morita compatible.
\end{thm}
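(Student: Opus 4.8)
The plan is to take $\TT$ to be the collection of all exact and Morita compatible coaction functors, to note that $\TT$ is nonempty because maximalization belongs to it (it is exact by \thmref{max exact} and Morita compatible by the example preceding \lemref{id}), and to let $\sigma$ be the greatest lower bound furnished by \thmref{glb}, so that $A^m_\sigma=\clspn_{\tau\in\TT}A^m_\tau$ for every coaction. Throughout I would use repeatedly that, for any $C^*$-algebra, the closed two-sided ideals form a complete lattice in which the join of a family $\{J_i\}$ is the closed sum $\clspn_i J_i$ and the meet is the intersection, and that this lattice is a \emph{frame}: a finite meet distributes over an arbitrary join, since closed ideals correspond to the open subsets of the primitive ideal space. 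I would also use that an injective homomorphism is isometric (hence carries closed spans of subideals to closed spans), that the sum of two closed ideals is again closed, and that both the Rieffel correspondence of ideals and the preimage map under a surjection are complete-lattice isomorphisms, so in particular preserve arbitrary joins.

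For Morita compatibility I would take an $(A,\delta)-(B,\epsilon)$ imprimitivity bimodule $(X,\zeta)$ with associated $A^m-B^m$ imprimitivity bimodule $X^m$ as in \lemref{Xm}. Each $\tau\in\TT$ satisfies $X^m\dashind B^m_\tau=A^m_\tau$, and since $X^m\dashind(\cdot)$ preserves arbitrary joins,
\[
X^m\dashind B^m_\sigma
=X^m\dashind\Bigl(\clspn_{\tau\in\TT}B^m_\tau\Bigr)
=\clspn_{\tau\in\TT}\bigl(X^m\dashind B^m_\tau\bigr)
=\clspn_{\tau\in\TT}A^m_\tau
=A^m_\sigma,
\]
which is exactly \eqref{induce}; hence $\sigma$ is Morita compatible.

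For exactness I would apply the criterion of \thmref{functor exact}. Fixing a short exact sequence $0\to(I,\delta_I)\xrightarrow{\phi}(A,\delta)\xrightarrow{\psi}(B,\delta^I)\to0$ of coactions and maximalizing, \thmref{max exact} makes $0\to I^m\xrightarrow{\phi^m}A^m\xrightarrow{\psi^m}B^m\to0$ exact, so $\phi^m$ is injective, $\psi^m$ is surjective, and $\ker\psi^m=\phi^m(I^m)=:K$. Each $\tau\in\TT$ satisfies $\phi^m(I^m_\tau)=K\cap A^m_\tau$ and $K+A^m_\tau=(\psi^m)\inv(B^m_\tau)$, and I would obtain the corresponding identities for $\sigma$ by taking closed sums over $\tau$. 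Using that $(\psi^m)\inv$ preserves joins and that $K$ plus a closed ideal is closed,
\[
(\psi^m)\inv(B^m_\sigma)
=\clspn_{\tau}(\psi^m)\inv(B^m_\tau)
=\clspn_\tau\bigl(K+A^m_\tau\bigr)
=K+\clspn_\tau A^m_\tau
=\phi^m(I^m)+A^m_\sigma,
\]
while injectivity of $\phi^m$ lets me pass the closed span through $\phi^m$, and frame distributivity gives
\[
\phi^m(I^m_\sigma)
=\clspn_\tau\phi^m(I^m_\tau)
=\clspn_\tau\bigl(K\cap A^m_\tau\bigr)
=K\cap\clspn_\tau A^m_\tau
=\phi^m(I^m)\cap A^m_\sigma.
\]
Thus both conditions of \thmref{functor exact} hold for $\sigma$, so $\sigma$ is exact.

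I expect the one genuinely nontrivial step to be the middle equality in the last display, namely $\clspn_\tau(K\cap A^m_\tau)=K\cap\clspn_\tau A^m_\tau$: the inclusion $\subseteq$ is automatic, but the reverse inclusion is precisely the infinite distributive law for intersections against closed sums of ideals, which is where I must invoke the identification of the closed-ideal lattice of a $C^*$-algebra with the frame of open sets of its primitive ideal spectrum. The remaining manipulations (join-preservation of $(\psi^m)\inv$ and of the Rieffel correspondence, isometry of $\phi^m$, and closedness of $\phi^m(I^m)+A^m_\sigma$) are standard.
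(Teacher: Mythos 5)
Your proposal is correct and takes essentially the same route as the paper's own proof: both obtain the greatest lower bound from \thmref{glb}, check Morita compatibility by pushing the closed span of ideals through Rieffel induction, and check exactness via \thmref{functor exact} by taking closed spans in the two ideal identities, with the crux in both being the distributive law $\clspn_\tau\bigl(K\cap A^m_\tau\bigr)=K\cap\clspn_\tau A^m_\tau$ for closed ideals. The only difference is expository: you justify that distributivity carefully via the identification of the closed-ideal lattice with the frame of open subsets of $\prim$, where the paper simply remarks that all spaces involved are ideals in $C^*$-algebras.
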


\begin{proof}
Let $\TT$ be the collection of all exact and Morita compatible coaction functors,
and let $\tau$ be the greatest lower bound of $\TT$.
As in the proof of \thmref{glb},
for every coaction $(A,\delta)$ we have
\[
A^m_\tau=\clspn_{\sigma\in\TT}A^m_\sigma.
\]
For exactness, we apply \thmref{exact functor}
Let
\[
\xymatrix{
0\ar[r]
&(I,\gamma) \ar[r]^-\phi
&(A,\delta) \ar[r]^-\psi
&(B,\epsilon)
\ar[r]&0
}
\]
be a short exact sequence of coactions.
Then
\begin{align*}
\phi^m(I^m_\tau)
&=\phi^m\left(\clspn_{\sigma\in\TT}I^m_\sigma\right)
\\&=\clspn_{\sigma\in\TT}\phi^m(I^m_\sigma)
\\&=\clspn_{\sigma\in\TT}\bigl(\phi^m(I^m)\cap A^m_\sigma\bigr)
\righttext{(since $\sigma$ is exact)}
\\&=\phi^m(I^m)\cap \clspn_{\sigma\in\TT}A^m_\sigma
\\&\hspace{.5in}\text{(since all spaces involved are ideals in $C^*$-algebras)}
\\&=\phi^m(I^m)\cap A^m_\tau,
\end{align*}
and
\begin{align*}
\phi^m(I^m)+A^m_\tau
&=\phi^m(I^m)+\clspn_{\sigma\in\TT}A^m_\sigma
\\&=\clspn_{\sigma\in\TT}\bigl(\phi^m(I^m)+A^m_\sigma\bigr)
\\&=\clspn_{\sigma\in\TT}(\psi^m)\inv(B^m_\sigma)
\righttext{(since $\sigma$ is exact)}
\\&=(\psi^m)\inv\bigl(\clspn_{\sigma\in\TT}B^m_\sigma\bigr)
\\&=(\psi^m)\inv(B^m_\tau),
\end{align*}
so $\tau$ is exact.

For Morita compatibility,
let $(X,\zeta)$ be an $(A,\delta)-(B,\epsilon)$ imprimitivity bimodule,
with associated $A^m-B^m$ imprimitivity bimodule $X^m$.
Then
\begin{align*}
X^m\dashind B^m_\tau
&=X^m\dashind \clspn_{\sigma\in\TT}B^m_\sigma
\\&=\clspn_{\sigma\in\TT} X^m\dashind B^m_\sigma
\\&\hspace{.5in}\righttext{(by continuity of Rieffel induction)}
\\&=\clspn_{\sigma\in\TT} A^m_\sigma\righttext{(since $\sigma$ is Morita compatible)}
\\&=A^m_\tau,
\end{align*}
so $\tau$ is Morita compatible.
\end{proof}

\begin{defn}
We call the above greatest lower bound of the collection of all exact and Morita compatible coaction functors
the \emph{minimal exact and Morita compatible coaction functor}.
\end{defn}

\subsection*{Comparison with \cite{bgwexact}}

As we mentioned previously, \cite[see page~8]{bgwexact} defines one crossed-product functor $\sigma_1$ to be \emph{smaller} than another one $\sigma_2$, written $\sigma_1\le \sigma_2$, if the natural surjection $A\rtimes_{\alpha,\sigma_2} G\to A\rtimes_{\alpha,r} G$ factors through the $\sigma_1$-crossed product.

Let $\tau$ be a coaction functor, and let $\sigma=\tau\circ\cp$ be the associated crossed-product functor, i.e.,
\[
(A,\alpha)^\sigma=A\rtimes_{\alpha,\sigma} G:=(A\rtimes_\alpha G)^\tau.
\]
For a morphism $\phi:(A,\alpha)\to (B,\beta)$ of actions,
we write
\[
\phi\rtimes_\sigma G=(\phi\rtimes G)^\tau:A\rtimes_{\alpha,\sigma} G\to B\rtimes_{\beta,\sigma} G
\]
for the associated morphism of $\sigma$-crossed products.

\begin{prop}\label{compose}
With the above notation, if the coaction functor $\tau$ is exact or Morita compatible, then the associated crossed-product functor $\sigma$ has the same property.
Moreover, if $\tau_1\le \tau_2$ then $\sigma_1\le \sigma_2$.
\end{prop}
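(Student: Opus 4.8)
The plan is to dispatch the three assertions in turn, with exactness and the order relation being essentially formal and Morita compatibility being the real work. For exactness, I would simply note that $\sigma=\tau\circ\cp$ is a composition of exact functors: the full-crossed-product functor $\cp\colon\ac\to\co$ is exact by \cite[Proposition~12]{gre:local} (the Example following \defnref{def:exact}), and $\tau$ is exact by hypothesis. Thus a short exact sequence $0\to I\to A\to B\to0$ of $G$-$C^*$-algebras is carried by $\cp$ to a short exact sequence of coactions and then by $\tau$ to the short exact sequence $0\to I\rtimes_{\alpha,\sigma}G\to A\rtimes_{\alpha,\sigma}G\to B\rtimes_{\beta,\sigma}G\to0$, since by definition $(A\rtimes_\alpha G)^\tau=A\rtimes_{\alpha,\sigma}G$, and similarly for $I$ and $B$. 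Forgetting coaction structure, this is precisely exactness of $\sigma$ as a crossed-product functor.

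For the order relation I would use \lemref{order}. First I would observe that, evaluating the apparatus of \lemref{axiom} at the (maximal) dual coaction $(A\rtimes_\alpha G,\what\alpha)$, the map $q^\tau_{A\rtimes_\alpha G}\colon A\rtimes_\alpha G\to A\rtimes_{\alpha,\sigma}G$ is the canonical surjection from the full crossed product, and, using that $(A\rtimes_\alpha G)^n\cong A\rtimes_{\alpha,r}G$ is a normalization, that $\Lambda^\tau_{A\rtimes_\alpha G}\colon A\rtimes_{\alpha,\sigma}G\to A\rtimes_{\alpha,r}G$ is the canonical surjection onto the reduced crossed product; so $\sigma$ really is a crossed-product functor in the sense of \cite{bgwexact}. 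If now $\tau_1\le\tau_2$, then \lemref{order} supplies a surjection $\Gamma^{\tau_2,\tau_1}$ with $\Lambda^{\tau_1}\circ\Gamma^{\tau_2,\tau_1}=\Lambda^{\tau_2}$; evaluating at $A\rtimes_\alpha G$ shows that the surjection of the $\sigma_2$-crossed product onto $A\rtimes_{\alpha,r}G$ factors through the $\sigma_1$-crossed product, which is exactly $\sigma_1\le\sigma_2$.

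Morita compatibility is the crux, and the obstacle is that \defnref{morita functor} is not literally the definition in \cite[Definition~3.2]{bgwexact}, so I must bridge the two using linking algebras. Beginning with a $G$-equivariant $(A,\alpha)-(B,\beta)$ imprimitivity bimodule $(X,\zeta)$, I would form the linking algebra $(L,\gamma)$ with its corner projections $p,q$. Applying $\cp$, the crossed product $(L\rtimes_\gamma G,\what\gamma)$ is the linking algebra of the dual-coaction imprimitivity bimodule $(X\rtimes_\zeta G,\what\zeta)$ between $(A\rtimes_\alpha G,\what\alpha)$ and $(B\rtimes_\beta G,\what\beta)$. Since $\tau$ is Morita compatible, \lemref{X tau} then produces an $(A\rtimes_\alpha G)^\tau-(B\rtimes_\beta G)^\tau$ imprimitivity bimodule $(X\rtimes_\zeta G)^\tau$, that is, an $A\rtimes_{\alpha,\sigma}G-B\rtimes_{\beta,\sigma}G$ imprimitivity bimodule, so the two $\sigma$-crossed products are Morita equivalent.

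The part I expect to require care is matching this against \cite[Definition~3.2]{bgwexact}, namely checking that $(X\rtimes_\zeta G)^\tau$ is the off-diagonal corner of $(L\rtimes_\gamma G)^\tau=L\rtimes_{\gamma,\sigma}G$, equivalently that $\tau$ carries the corner projections $p,q$ to complementary full projections whose corners recover $A\rtimes_{\alpha,\sigma}G$ and $B\rtimes_{\beta,\sigma}G$. I would verify this exactly as in \lemref{id}: the maximalization map of the linking algebra preserves corners, the natural surjection $q^\tau_{L\rtimes_\gamma G}$ sends the lifted projections to projections in the multiplier algebra, and the defining identity $X^m\dashind B^m_\tau=A^m_\tau$ of \defnref{morita functor}, applied to the linking-algebra bimodule, guarantees that the resulting off-diagonal corner is the Rieffel-corresponding quotient bimodule. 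Reconciling this decomposition with the linking-algebra formulation of \cite[Definition~3.2]{bgwexact} then yields Morita compatibility of $\sigma$.
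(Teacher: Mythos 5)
Your treatment of exactness and of the partial order is correct and is essentially the paper's own argument: exactness of $\sigma=\tau\circ\cp$ is just closure of exact functors under composition, and the ordering statement follows because the dual coaction $\what\alpha$ is maximal, so $q^\tau_{A\rtimes_\alpha G}$ and $\Lambda^\tau_{A\rtimes_\alpha G}$ may be regarded as the canonical surjections onto $A\rtimes_{\alpha,\sigma}G$ and $A\rtimes_{\alpha,r}G$ respectively.

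The Morita compatibility part, however, has a genuine gap, rooted in a misreading of \cite[Definition~3.2]{bgwexact}. That definition is not a statement about general equivariant Morita equivalences, linking algebras, or corners; it requires that for each action $(A,\alpha)$ the \emph{specific} unwinding isomorphism $\Phi\colon(A\otimes\KK)\rtimes_{\alpha\otimes\ad\lambda}G\to(A\rtimes_\alpha G)\otimes\KK$ descend to an isomorphism $(A\otimes\KK)\rtimes_{\alpha\otimes\ad\lambda,\sigma}G\cong(A\rtimes_{\alpha,\sigma}G)\otimes\KK$, equivalently that
$\ker q^\tau_{(A\otimes\KK)\rtimes_{\alpha\otimes\ad\lambda}G}
=\ker\bigl((q^\tau_{A\rtimes_\alpha G}\otimes\id)\circ\Phi\bigr)$.
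Your linking-algebra construction (which is fine as far as it goes, and does use \lemref{X tau} correctly) proves something different and weaker for this purpose: that equivariantly Morita equivalent actions have Morita equivalent $\sigma$-crossed products. The mere existence of \emph{some} imprimitivity bimodule between $(A\otimes\KK)\rtimes_{\alpha\otimes\ad\lambda,\sigma}G$ and $(A\rtimes_{\alpha,\sigma}G)\otimes\KK$ does not identify the two kernels above, so it cannot verify the Baum--Guentner--Willett condition; and no amount of corner-chasing in $L\rtimes_{\gamma,\sigma}G$ will produce the descent of $\Phi$, because $\Phi$ never enters your argument.

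What is needed --- and what the paper does --- is twofold. First, apply functoriality of $\tau$ to $\Phi$ itself, which is a $\what{\alpha\otimes\ad\lambda}-(\what\alpha\otimes_*\id)$ equivariant isomorphism, obtaining an isomorphism $\Phi^\tau$ of the $\tau$-quotients. Second, apply Morita compatibility of $\tau$ \emph{at the coaction level}, not to the dual bimodule of a general equivalence, but to the stabilization bimodule $(A\rtimes_\alpha G)\otimes L^2(G)$ between $\bigl((A\rtimes_\alpha G)\otimes\KK,\what\alpha\otimes_*\id\bigr)$ and $(A\rtimes_\alpha G,\what\alpha)$; by the Rieffel correspondence this gives $\ker q^\tau_{(A\rtimes_\alpha G)\otimes\KK}=(\ker q^\tau_{A\rtimes_\alpha G})\otimes\KK$, hence an isomorphism $\theta\colon\bigl((A\rtimes_\alpha G)\otimes\KK\bigr)^\tau\to(A\rtimes_{\alpha,\sigma}G)\otimes\KK$ compatible with the natural surjections. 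The composite $\Upsilon=\theta\circ\Phi^\tau$ is then the required descent of $\Phi$. In short, the point where Morita compatibility of $\tau$ must be invoked is to show that $\tau$ commutes with stabilization by $\KK$; your proposal never establishes this, and without it the bridge to \cite[Definition~3.2]{bgwexact} fails.
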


\begin{proof}
The last statement follows immediately from the definitions.
For the other statement, first assume that $\tau$ is exact,
and let
\[
\xymatrix{
0\ar[r]&(I,\gamma)\ar[r]^\phi &(A,\alpha)\ar[r]^\psi &(B,\beta)\ar[r]&0
}
\]
be a short exact sequence of actions.
Then the sequence
\[
\xymatrix{
0\ar[r] &(I\rtimes_\gamma G,\what\gamma) \ar[r]^{\phi\rtimes G} &(A\rtimes_\alpha G,\what\alpha) \ar[r]^{\psi\rtimes G} &(B\rtimes_\beta G,\what\beta)\ar[r]&0
}
\]
of coactions is exact,
since the full-crossed-product functor is exact.
Then by exactness of $\tau$ we see that the sequence
\[
\xymatrix@C+10pt{
0\ar[r] &I\rtimes_{\gamma,\sigma} G \ar[r]^{\phi\rtimes_\sigma G} & A\rtimes_{\alpha,\sigma} G \ar[r]^{\psi\rtimes_\sigma G} & B\rtimes_{\beta,\sigma} G \ar[r] & 0
}
\]
is also exact.

On the other hand, assume that the coaction functor $\tau$ is Morita compatible.
As in \cite[Section~3]{bgwexact}, the \emph{unwinding isomorphism} $\Phi$,
which is the integrated form of the covariant pair
\begin{align*}
\pi(a\otimes k)&=i_A(a)\otimes k\\
u_s&=i_G(s)\otimes \lambda_s,
\end{align*}
fits into a diagram
\begin{equation}\label{unwind}
\xymatrix@C+20pt{
(A\otimes\KK)\rtimes_{\alpha\otimes\ad\lambda} G \ar[r]^-\Phi_-\simeq
\ar[d]_{q^\tau_{(A\otimes\KK)\rtimes_{\alpha\otimes\ad\lambda} G}}
&(A\rtimes_\alpha G)\otimes\KK \ar[d]^{q^\tau_{A\rtimes_\alpha G}\otimes\id}
\\
(A\otimes\KK)\rtimes_{\alpha\otimes\ad\lambda,\sigma} G
\ar@{-->}[r]^-\simeq_-\Upsilon
&(A\rtimes_{\alpha,\sigma} G)\otimes\KK,
}
\end{equation}
i.e.,
\[
\ker q^\tau_{(A\otimes\KK)\rtimes_{\alpha\otimes\ad\lambda} G}
=\ker (q^\tau_{A\rtimes_\alpha G}\otimes\id)\circ \Phi.
\]
The diagram \eqref{unwind} fits into a more elaborate diagram
\[
\xymatrix@C+5pt{
(A\otimes\KK)\rtimes_{\alpha\otimes\ad\lambda} G \ar[r]^-\Phi_-\simeq
\ar[d]_{q^\tau_{(A\otimes\KK)\rtimes_{\alpha\otimes\ad\lambda} G}}
&(A\rtimes_\alpha G)\otimes\KK
\ar[d]|{q^\tau_{(A\rtimes_\alpha G)\otimes\KK}}
\ar@/^1pc/[ddr]^{q^\tau_{A\rtimes_\alpha G}\otimes\id}
\\
(A\otimes\KK)\rtimes_{\alpha\otimes\ad\lambda,\sigma} G
\ar@{-->}[r]^-\simeq_-{\Phi^\tau}
\ar@{-->}@/_1pc/[drr]_\Upsilon^\simeq
&((A\rtimes_\alpha G)\otimes\KK)^\tau \ar@{-->}[dr]^\theta_\simeq
\\
&&(A\rtimes_{\alpha,\sigma} G)\otimes\KK,
}
\]
which we proceed to analyze.
There is a unique
\[
\bigl(\what{\alpha\otimes\ad\lambda}\bigr)^\tau-(\what\alpha\otimes_*\id)^\tau
\]
equivariant
homomorphism $\Phi^\tau$
making the upper-left rectangle commute,
since $\tau$ is functorial.
Moreover, $\Phi^\tau$ is an isomorphism since $\Phi$ is, again by functoriality.
Applying Morita compatibility of $\tau$ to the equivariant
$((A\rtimes_\alpha G)\otimes\KK)-(A\rtimes_\alpha G)$
imprimitivity bimodule
$(A\rtimes_\alpha G)\otimes L^2(G)$
shows that there is a unique 
\[
(\what\alpha\otimes_*\id)^\tau-(\what\alpha^\tau\otimes_*\id)
\]
equivariant
isomorphism $\theta$
that makes the upper right triangle commute.
Thus there is a unique isomorphism $\Upsilon$ making the lower left triangle commute,
and then the outer quadrilateral commutes, as desired.
\end{proof}

\begin{q}\

\begin{enumerate}
\item
Is the minimal exact and Morita compatible crossed product of \cite[Section~4]{bgwexact} naturally isomorphic to the composition of the minimal exact and Morita compatible coaction functor and the full crossed product?

\item
More generally, given a crossed-product functor on actions, when does it decompose as a full crossed product followed by a coaction functor?
Does it make any difference if the crossed-product functor is exact or Morita compatible?
\end{enumerate}
\end{q}

\section{Decreasing coaction functors}\label{decreasing}

In this section we introduce a particular type of coaction functor with the convenient property that we do not need to check things by going through the maximalization functor, as we'll see in Propositions~\ref{decreasing exact} and \ref{decreasing morita}.
Suppose that for each coaction $(A,\delta)$ we have a coaction $(A^\tau,\delta^\tau)$ and a $\delta-\delta^\tau$ equivariant surjection $Q^\tau:A\to A^\tau$,
and further suppose that for each morphism $\phi:(A,\delta)\to (B,\epsilon)$ we have
\[
\ker Q^\tau_A\subset \ker Q^\tau_B \circ \phi,
\]
so that there is a unique morphism $\phi^\tau$ making the diagram
\[
\xymatrix{
(A,\delta) \ar[r]^-\phi \ar[d]_{Q^\tau_A}
&(B,\epsilon) \ar[d]^{Q^\tau_B}
\\
(A^\tau,\delta^\tau) \ar@{-->}[r]_-{\phi^\tau}^-{!}
&(B^\tau,\delta^\tau)
}
\]
commute.
The uniqueness and surjectivity assumptions imply that $\tau$ constitutes a functor on the category of coactions, and moreover $Q^\tau:\id\to \tau$ is a natural transformation.

\begin{defn}\label{decreasing defn}
We call a functor $\tau$ as above \emph{decreasing} if for each coaction $(A,\delta)$ we have
\[
\ker Q^\tau_A\subset \ker \Lambda_A.
\].
\end{defn}

\begin{lem}\label{dec coact}
Every decreasing functor $\tau$ on coactions is a coaction functor, and moreover $\tau\le \id$.
\end{lem}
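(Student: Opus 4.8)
The plan is to promote the natural transformation $Q^\tau\colon\id\to\tau$ to a natural transformation from maximalization to $\tau$ by precomposing with the maximalizing maps, and then to verify the two defining conditions of \defnref{coaction functor}. Concretely, for each coaction $(A,\delta)$ I set
\[
q^\tau_A:=Q^\tau_A\circ q^m_A\colon A^m\to A^\tau,
\]
the composite of the maximalizing map $q^m_A\colon A^m\to A$ with the decreasing surjection $Q^\tau_A\colon A\to A^\tau$. Since $q^m$ is a natural transformation from maximalization to the identity and $Q^\tau$ is a natural transformation from the identity to $\tau$, their composite $q^\tau$ is a natural transformation from maximalization to $\tau$; the naturality square for a morphism $\phi$ is obtained by stacking the naturality squares for $q^m$ and $Q^\tau$. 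Condition~(1) of \defnref{coaction functor} is then immediate, since $q^\tau_A$ is a composite of the surjections $q^m_A$ and $Q^\tau_A$.

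The substance is condition~(2), namely $\ker q^\tau_A\subset\ker\Lambda_{A^m}$. The key observation I would isolate first is that $\Lambda_A\circ q^m_A\colon A^m\to A^n$ is itself a normalization of $(A^m,\delta^m)$: it is surjective, its target $(A^n,\delta^n)$ is normal, and it induces an isomorphism on crossed products because $q^m_A\rtimes G$ (by the definition of maximalization) and $\Lambda_A\rtimes G$ (by the definition of normalization) are both isomorphisms. By uniqueness of normalization, the canonical normalizing map $\Lambda_{A^m}$ differs from $\Lambda_A\circ q^m_A$ only by an isomorphism of the targets, so in particular
\[
\ker\Lambda_{A^m}=\ker\bigl(\Lambda_A\circ q^m_A\bigr).
\]
Granting this, condition~(2) is quick: if $x\in\ker q^\tau_A$ then $q^m_A(x)\in\ker Q^\tau_A$, which is contained in $\ker\Lambda_A$ by the decreasing hypothesis (\defnref{decreasing defn}); hence $\Lambda_A(q^m_A(x))=0$, i.e.\ $x\in\ker(\Lambda_A\circ q^m_A)=\ker\Lambda_{A^m}$. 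I expect this identification of kernels to be the only genuine obstacle; everything else is formal bookkeeping.

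With conditions~(1) and~(2) verified, $(\tau,q^\tau)$ is a coaction functor. Finally, to see $\tau\le\id$ in the sense of \defnref{smaller}, recall that the natural surjection attached to the identity functor is $q^m_A$, so $A^m_{\id}=\ker q^m_A$, while $A^m_\tau=\ker q^\tau_A=\ker(Q^\tau_A\circ q^m_A)$. Since anything annihilated by $q^m_A$ is annihilated by $Q^\tau_A\circ q^m_A$, we get $A^m_{\id}=\ker q^m_A\subset\ker(Q^\tau_A\circ q^m_A)=A^m_\tau$ for every coaction $(A,\delta)$, which is exactly the inequality $\tau\le\id$.
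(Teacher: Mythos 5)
Your proof is correct and takes essentially the same route as the paper's: define $q^\tau_A=Q^\tau_A\circ q^m_A$, obtain naturality and surjectivity by composing the two natural surjections, verify condition (2) via the identity $\ker\Lambda_{A^m}=\ker\bigl(\Lambda_A\circ q^m_A\bigr)$, and read off $\tau\le\id$ from the containment of kernels. The only difference is that you explicitly justify that kernel identity (by observing that $\Lambda_A\circ q^m_A$ is itself a normalization of $(A^m,\delta^m)$ and invoking uniqueness of normalizations), a step the paper asserts without comment.
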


\begin{proof}
For each coaction $(A,\delta)$,
define a homomorphism $q^\tau_A$ by the commutative diagram
\[
\xymatrix{
A^m \ar[d]_{q^m_A} \ar[dr]^{q^\tau_A}
\\
A \ar[r]_-{Q^\tau_A}
&A^\tau,
}
\]
where $q^m_A$ is the maximalization map.
$q^\tau$ is natural and surjective since both $q^m$ and $Q^\tau$ are.
We have
\begin{align*}
\ker q^\tau_A
&=\{a\in A^m:q^m_A(a)\in \ker Q^\tau_A\}
\\&\subset \{a\in A^m:q^m_A(a)\in \ker \Lambda_A\}
\\&=\ker \Lambda_A\circ q^m_A
\\&=\ker \Lambda_{A^m}.
\end{align*}
Thus $\tau$ is a coaction functor, and then $\tau\le \id$ by \lemref{order}.
\end{proof}

\begin{notn}
For a decreasing coaction functor $\tau$ and any coaction $(A,\delta)$ put
\[
A_\tau=\ker Q^\tau_A.
\]
\end{notn}

\begin{prop}\label{decreasing exact}
A decreasing coaction functor $\tau$ is exact if and only if for any short exact sequence
\begin{equation}\label{seq dec}
\xymatrix{
0\ar[r] &(I,\delta_I)\ar[r]^\phi &(A,\delta)\ar[r]^\psi &(B,\delta^I)\ar[r]&0
}
\end{equation}
of coactions,
both
\[
\phi(I_\tau)=\phi(I)\cap A_\tau
\]
and
\[
\phi(I)+A_\tau\supset \psi\inv(B_\tau)
\]
hold.
\end{prop}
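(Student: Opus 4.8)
The plan is to deduce the statement from \thmref{functor exact} by translating its two conditions, which are phrased at the level of the maximalizations $I^m,A^m,B^m$, down to the coactions $I,A,B$ themselves. The bridge is the identity $A^m_\tau=(q^m_A)\inv(A_\tau)$, which is immediate from the defining diagram of \lemref{dec coact}: there $q^\tau_A=Q^\tau_A\circ q^m_A$, and $A_\tau=\ker Q^\tau_A$, so $A^m_\tau=\ker q^\tau_A=(q^m_A)\inv(A_\tau)$; the analogous identities hold for $I$ and $B$. In particular each of $I^m_\tau,A^m_\tau,B^m_\tau$ contains the relevant maximalization kernel, and since the maximalizing maps are surjective we get $q^m_A(A^m_\tau)=A_\tau$, $q^m_I(I^m_\tau)=I_\tau$, and $q^m_B(B^m_\tau)=B_\tau$.

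First I would record the ambient commutative diagram: the maximalization sequence $0\to I^m\to A^m\to B^m\to 0$, with maps $\phi^m,\psi^m$, sits above the given sequence $0\to I\to A\to B\to 0$ via the maximalizing maps $q^m_I,q^m_A,q^m_B$. The top row is exact by \thmref{max exact}, the bottom by hypothesis, so $\phi$ and $\phi^m$ are injective; and naturality of maximalization supplies the squares $q^m_A\circ\phi^m=\phi\circ q^m_I$ and $q^m_B\circ\psi^m=\psi\circ q^m_A$. Note also that in both conditions the reverse inclusions are automatic from functoriality (since $\phi^m(I^m_\tau)\subset A^m_\tau$ and $\psi^m\circ\phi^m=0$, and likewise downstairs), so only the $\supset$ directions carry content.

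Then I would verify each condition by a pair of diagram chases. For the first condition, to obtain $\phi^m(I^m_\tau)\supset\phi^m(I^m)\cap A^m_\tau$ from $\phi(I_\tau)=\phi(I)\cap A_\tau$, I take $a=\phi^m(x)\in A^m_\tau$, push down via the naturality square to find $\phi(q^m_I(x))=q^m_A(a)\in\phi(I)\cap A_\tau=\phi(I_\tau)$, and use injectivity of $\phi$ to conclude $q^m_I(x)\in I_\tau$, hence $x\in(q^m_I)\inv(I_\tau)=I^m_\tau$; the converse is the same chase run upward, lifting an element of $\phi(I)\cap A_\tau$ through the surjections $q^m_I,q^m_A$ and using injectivity of $\phi^m$. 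For the second condition I would argue similarly: given $a$ with $\psi^m(a)\in B^m_\tau$, pushing down places $q^m_A(a)$ in $\psi\inv(B_\tau)\subset\phi(I)+A_\tau$, so $q^m_A(a)=\phi(y)+c$ with $y\in I$, $c\in A_\tau$; lifting $y$ to $x\in I^m$ and subtracting $\phi^m(x)$ shows $a-\phi^m(x)\in(q^m_A)\inv(A_\tau)=A^m_\tau$, whence $a\in\phi^m(I^m)+A^m_\tau$, and again the reverse is the downward reading of the same computation.

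The only point requiring care — and the main, albeit mild, obstacle — is that $\phi^m(I^m)=\ker\psi^m$ need \emph{not} contain $\ker q^m_A$, so there is no clean lattice isomorphism allowing one to transport the ideals $\phi^m(I^m)$ and $A^m_\tau$ wholesale across $q^m_A$; one is forced to chase individual elements, lifting through the surjective maximalizing maps and descending through the injective maps $\phi,\phi^m$, exactly as above. Combining the two equivalences just established with \thmref{functor exact} then yields the proposition.
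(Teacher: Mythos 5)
Your proof is correct, but it takes a genuinely different route from the paper's. The paper does \emph{not} pass through \thmref{functor exact} at all: it simply re-applies \lemref{nine} to the $3\times 3$ diagram whose columns are the kernel--inclusion--quotient sequences $0\to A_\tau\to A\to A^\tau\to 0$ (and likewise for $I$, $B$) coming from the natural surjections $Q^\tau$, and whose \emph{middle row is the given sequence itself}, which is exact by hypothesis; \lemref{nine} then immediately identifies exactness of the bottom row with the two stated conditions. This is why the paper calls the proof ``slightly easier'' than that of \thmref{functor exact}, where the middle row's exactness had to be supplied by \thmref{max exact}. You instead treat \thmref{functor exact} as a black box and transport its two conditions down along the maximalizing maps, with the bridge $A^m_\tau=(q^m_A)\inv(A_\tau)$ (valid precisely because $q^\tau_A=Q^\tau_A\circ q^m_A$ for decreasing functors) and element chases through the naturality squares; your chases are sound, and your cautionary remark is apt --- since $\phi^m(I^m)$ need not contain $\ker q^m_A$, one cannot transport the conditions by a lattice correspondence and must chase elements, using injectivity of $\phi$ and $\phi^m$ and surjectivity of the $q^m$'s exactly as you do. What each approach buys: the paper's is shorter and self-contained, reusing the same nine-lemma template; yours costs more bookkeeping but yields as a by-product the precise equivalence, for decreasing functors, between the maximalized conditions of \thmref{functor exact} and the unmaximalized ones, making explicit why the two criteria test the same thing.
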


\begin{proof}
The proof is very similar to, and slightly easier than, that of \thmref{functor exact}, using the commutative diagram
\[
\xymatrix{
&0\ar[d]&0\ar[d]&0\ar[d]
\\
0\ar[r]&I_\tau\ar[d]_{\iota_I}\ar[r]^{\phi|}&A^m_\tau\ar[d]_{\iota_A}\ar[r]^{\psi|}&B^m_\tau\ar[d]_{\iota_{B}}\ar[r]&0
\\
0\ar[r]&I\ar[d]_{Q^\tau_I}\ar[r]^{\phi}&A\ar[d]_{Q^\tau_A}\ar[r]^{\psi}&B\ar[d]_{Q^\tau_{B}}\ar[r]&0
\\
0\ar[r]&I^\tau\ar[r]^{\phi^\tau}\ar[d]&A^\tau\ar[r]^{\psi^\tau}\ar[d]&B^\tau\ar[r]\ar[d]&0
\\
&0&0&0.
}
%\qedhere
\]
\end{proof}

\begin{prop}\label{decreasing morita}
A decreasing coaction functor $\tau$ is Morita compatible if and only if
whenever $(X,\zeta)$ is an $(A,\delta)-(B,\epsilon)$ imprimitivity bimodule,
there are an $A^\tau-B^\tau$ imprimitivity bimodule $X^\tau$ and a $Q^\tau_A-Q^\tau_B$ compatible imprimitivity-bimodule homomorphism $Q^\tau_X:X\to X^\tau$.
\end{prop}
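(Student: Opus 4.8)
The plan is to deduce this from \lemref{X tau}, which already characterizes Morita compatibility in terms of an imprimitivity-bimodule homomorphism $q^\tau_X\colon X^m\to X^\tau$ out of the \emph{maximalized} bimodule. The point of the decreasing hypothesis is that, by \lemref{dec coact}, the natural surjection factors as $q^\tau_A=Q^\tau_A\circ q^m_A$, where $q^m_A\colon A^m\to A$ is the maximalization map and $Q^\tau_A\colon A\to A^\tau$ is the structure map of the decreasing functor (and similarly for $B$). At the bimodule level, the proof of \lemref{id} supplies a $q^m_A-q^m_B$ compatible surjective imprimitivity-bimodule homomorphism $q^m_X\colon X^m\to X$, namely the restriction to the upper-right corner of the maximalization map of the linking algebra $L(X)$. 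Thus the two implications amount to passing between a map out of $X$ and a map out of $X^m$ by composing with, respectively factoring through, $q^m_X$.

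For the direction showing that the bimodule condition implies Morita compatibility, suppose we are given an $A^\tau-B^\tau$ imprimitivity bimodule $X^\tau$ and a $Q^\tau_A-Q^\tau_B$ compatible homomorphism $Q^\tau_X\colon X\to X^\tau$. I would simply set $q^\tau_X:=Q^\tau_X\circ q^m_X\colon X^m\to X^\tau$. Since $q^m_X$ is $q^m_A-q^m_B$ compatible and $Q^\tau_X$ is $Q^\tau_A-Q^\tau_B$ compatible, the composite $q^\tau_X$ is compatible with the composites $Q^\tau_A\circ q^m_A=q^\tau_A$ and $Q^\tau_B\circ q^m_B=q^\tau_B$; that is, $q^\tau_X$ is $q^\tau_A-q^\tau_B$ compatible. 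By \lemref{X tau}, $\tau$ is Morita compatible.

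For the converse, assume $\tau$ is Morita compatible, so \lemref{X tau} gives $X^\tau$ together with a $q^\tau_A-q^\tau_B$ compatible $q^\tau_X\colon X^m\to X^\tau$; I must descend this through the surjection $q^m_X$ to a map $Q^\tau_X\colon X\to X^\tau$, which reduces to showing $\ker q^m_X\subset\ker q^\tau_X$. Here I would pass to linking algebras: by the correspondence between bimodule and linking-algebra homomorphisms \cite[Lemma~1.52]{enchilada}, the compatible triples $(q^m_A,q^m_X,q^m_B)$ and $(q^\tau_A,q^\tau_X,q^\tau_B)$ assemble into homomorphisms $q^m_L\colon L^m\to L$ and $q^\tau_L\colon L^m\to L^\tau$ of the linking algebras, where $L^m=L(X^m)$ is the maximalization of $L:=L(X)$ and $L^\tau=L(X^\tau)$. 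Applying \lemref{dec coact} to the coaction $L$ yields $q^\tau_L=Q^\tau_L\circ q^m_L$ for the decreasing-functor structure map $Q^\tau_L\colon L\to L^\tau$, whence $\ker q^m_L\subset\ker q^\tau_L$, and restricting to the upper-right corner gives $\ker q^m_X\subset\ker q^\tau_X$. The induced map $Q^\tau_X\colon X\to X^\tau$ then coincides with the restriction of $Q^\tau_L$ to corners, so it is automatically $Q^\tau_A-Q^\tau_B$ compatible. The main obstacle is precisely the identification $L^\tau=L(X^\tau)$ together with the assertion that applying $\tau$ to the linking algebra $L$ respects its corner structure and restricts on the corners to $q^\tau_A$, $q^\tau_X$, $q^\tau_B$; this rests on the corner projections $p,q\in M(L)$ being fixed by the linking coaction, so that a coaction functor carries them to complementary invariant projections grading $L^\tau$. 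Once this compatibility of $\tau$ with corners is in hand, the descent and the required compatibility of $Q^\tau_X$ are purely formal.
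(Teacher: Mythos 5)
Your backward implication (existence of $X^\tau$ and $Q^\tau_X$ implies Morita compatibility) is exactly the paper's argument: set $q^\tau_X:=Q^\tau_X\circ q^m_X$, note it is $q^\tau_A-q^\tau_B$ compatible because $q^\tau=Q^\tau\circ q^m$ on both $A^m$ and $B^m$, and invoke \lemref{X tau}. That half is fine.

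The forward implication has a genuine gap. Your factorization of $q^\tau_X$ through $q^m_X$ hinges on the identity ``$q^\tau_L=Q^\tau_L\circ q^m_L$'', where $q^\tau_L$ denotes the homomorphism $L^m\to L(X^\tau)$ assembled from the triple $(q^\tau_A,q^\tau_X,q^\tau_B)$. But \lemref{dec coact} supplies that identity only for the \emph{functor's} natural surjection $L^m\to \tau(L)$; to transfer it to the assembled map you must identify $\tau(L)$ with $L(X^\tau)$ so that the functor's surjection restricts on the corners to $q^\tau_A$, $q^\tau_X$, $q^\tau_B$ --- i.e.\ you are conflating two different objects both written $L^\tau$. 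You acknowledge this, but the justification offered (the corner projections $p,q\in M(L)$ are fixed by the linking coaction) only yields \emph{some} $2\times2$ grading of $\tau(L)$; it does not show that the diagonal corners of $\ker\bigl(L^m\to\tau(L)\bigr)$ are $A^m_\tau$ and $B^m_\tau$. Naturality of $q^\tau$ applied to the equivariant corner inclusion $A\hookrightarrow L$ gives $A^m_\tau\subset p\,\ker\bigl(L^m\to\tau(L)\bigr)\,p$, but the reverse inclusion would require something like injectivity of $\tau$ applied to that inclusion, which coaction functors --- even decreasing ones --- are not known to satisfy. Indeed the identification cannot be ``purely formal'': for \emph{any} ideal $J$ of $L(X^m)$ one has $pJq=\clspn(pJp\cdot X^m)=\clspn(X^m\cdot qJq)$, hence $X^m\dashind (qJq)=pJp$; so if the corners of $\ker\bigl(L^m\to\tau(L)\bigr)$ were automatically $A^m_\tau$ and $B^m_\tau$, every coaction functor would satisfy $X^m\dashind B^m_\tau=A^m_\tau$ and \defnref{morita functor} would be vacuous.

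The paper's proof of this direction avoids linking algebras entirely: since $q^m_X$ is $q^m_A-q^m_B$ compatible and $q^\tau_X$ is $q^\tau_A-q^\tau_B$ compatible, \cite[Lemma~1.20]{enchilada} gives $\ker q^m_X=(\ker q^m_A)\cdot X^m$ and $\ker q^\tau_X=(\ker q^\tau_A)\cdot X^m$, while the decreasing property $q^\tau_A=Q^\tau_A\circ q^m_A$ gives $\ker q^m_A\subset\ker q^\tau_A$; hence $\ker q^m_X\subset\ker q^\tau_X$ and $q^\tau_X$ descends to $Q^\tau_X:X\to X^\tau$, whose left compatibility holds by construction and whose right compatibility follows from the Rieffel correspondence. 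Replacing your linking-algebra step with this kernel computation repairs the proof.
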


\begin{proof}
First suppose $\tau$ is Morita compatible.
Let $(X,\zeta)$ be an $(A,\delta)-(B,\epsilon)$ imprimitivity bimodule,and let $q^\tau_X:X^m\to X^\tau$
be a $q^m_A-q^m_B$ compatible imprimitivity-bimodule homomorphism onto an $A^\tau-B^\tau$ imprimitivity bimodule $X^\tau$, as in \lemref{X tau}.
By Lemmas~\ref{id} and \ref{X tau} there is also a $q^m_A-q^m_B$ compatible imprimitivity bimodule homomorphism $q^m_X$ of $X^m\to X$.
By definition, we have
\[
q^\tau_A=Q^\tau_A\circ q^m_A:A^m\to A^\tau.
\]
Thus
\begin{align*}
\ker q^m_X
&=(\ker q^m_A)\cdot X^m
\\&\subset (\ker Q^\tau_A\circ q^m_A)\cdot X^m
\\&=(\ker q^\tau_A)\cdot X^m
\\&=\ker q^\tau_X,
\end{align*}
and hence $q^\tau_X$ factors through a commutative diagram
\[
\xymatrix{
X^m \ar[dd]_{q^\tau_X} \ar[dr]^{q^m_X}
\\
&X \ar@{-->}[dl]^{Q^\tau_X}_{!}
\\
X^\tau
}
\]
for a unique imprimitivity bimodule homomorphism $Q^\tau_X$.
Moreover, $Q^\tau_X$ is compatible on the left with $Q^\tau_A$ by construction,
and similar reasoning, using the Rieffel correspondence of ideals, shows that it is also $Q^\tau_B$ compatible on the right.

Conversely, suppose we have $(X,\zeta)$, $X^\tau$, and $Q^\tau_X$ as indicated,
and let $(X^m,\zeta^m)$ be the associated $(A^m,\delta^m)-(B^m,\epsilon^m)$ imprimitivity bimodule from \lemref{Xm}.
By \lemref{X tau} it suffices to find a $q^m_A-q^m_B$ compatible imprimitivity-bimodule homomorphism $q^\tau_X:X^m\to X^\tau$.
Since $q^\tau=Q^\tau\circ q^m$ on both $A^m$ and $B^m$,
by \lemref{id} and our assumptions we can take
$q^\tau_X=Q^\tau_X\circ q^m_X$.
\end{proof}

\section{Coaction functors from large ideals}\label{large}

The most important source of examples of the decreasing coaction functors of the preceding section is large ideals.
We recall some basic concepts from \cite{graded, exotic}.
Let $E$ be an ideal of $B(G)$ that is \emph{large},
meaning it is nonzero, $G$-invariant, and weak* closed.
Then the preannihilator $\pann E$ of $E$ in $C^*(G)$ is an ideal contained in the kernel of the regular representation $\lambda$.
Write $C^*_E(G)=C^*(G)/\pann E$ for the quotient group $C^*$-algebra
and
$q_E:C^*(G)\to C^*_E(G)$ for the quotient map.
The ideal $\pann E=\ker q_E$ of $C^*(G)$
is \emph{weakly} $\delta_G$-invariant,
i.e.,
$\delta_G$ descends to a coaction, which we denote by $\delta_G^E$, on the quotient $C^*_E(G)$.

For any coaction $(A,\delta)$ and any large ideal $E$ of $B(G)$,
\[
A_E:=\{a\in A:E\cdot a=\{0\}\}=\ker(\id\otimes q_E)\circ\delta
\]
is a \emph{small} ideal of $A$ (that is, an ideal contained in $\ker j_A=\ker \Lambda_A$)
and we write
$A^E=A/A_E$ for the quotient $C^*$-algebra and
$Q^E_A:A\to A^E$ for the quotient map.
$A_E$ is weakly $\delta$-invariant \cite[Lemma~3.5]{exotic},
and we write $\delta^E$ for the quotient coaction on $A^E$.

\begin{rem}
The properties of the $B(G)$-module structure (see \appxref{module lemmas}) allow for a shorter proof of invariance than in \cite{exotic}:
if $a\in A_E$, $f\in B(G)$, and $g\in E$ then
\[
g\cdot (f\cdot a)=(gf)\cdot a=0,
\]
because $E$ is an ideal, and it follows that $B(G)\cdot A_E\subset A_E$.
\end{rem}

\begin{prop}\label{E coaction functor}
$(A,\delta)\mapsto (A^E,\delta^E)$ is a decreasing coaction functor,
which we denote by $\tau_E$.
\end{prop}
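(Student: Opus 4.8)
The plan is to verify that the assignment $(A,\delta)\mapsto(A^E,\delta^E)$ fits the framework of the \emph{decreasing coaction functor} machinery developed just above, i.e.\ that it satisfies the hypotheses preceding \defnref{decreasing defn} together with the decreasing condition itself. Everything then follows from \lemref{dec coact}. Concretely, there are three things to check: first, that $Q^E_A:A\to A^E$ is a $\delta-\delta^E$ equivariant surjection; second, that for every morphism $\phi:(A,\delta)\to(B,\epsilon)$ we have the ideal containment $\ker Q^E_A\subset\ker Q^E_B\circ\phi$ (which yields the induced morphism $\phi^E$ and functoriality); and third, that $\ker Q^E_A=A_E\subset\ker\Lambda_A$, which is the decreasing condition.

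For the first point, $Q^E_A$ is by construction the quotient map onto $A^E=A/A_E$, hence surjective, and it is $\delta-\delta^E$ equivariant by the very definition of the quotient coaction $\delta^E$ — this is already recorded in the text, since $A_E$ is weakly $\delta$-invariant by \cite[Lemma~3.5]{exotic} (with the shorter argument given in the preceding remark). The third point is also already in hand: the excerpt states that $A_E$ is a \emph{small} ideal, i.e.\ contained in $\ker j_A=\ker\Lambda_A$, which is exactly $\ker Q^E_A\subset\ker\Lambda_A$.

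The heart of the matter is therefore the second point, the naturality/functoriality containment, and this is where I expect to lean on the $B(G)$-module description from the Preliminaries. The clean way to phrase it is via the characterization
\[
A_E=\{a\in A: E\cdot a=\{0\}\},
\]
so that $A_E$ is precisely the set of elements annihilated by the large ideal $E$ under the induced $B(G)$-module action. Given a $\delta-\epsilon$ equivariant $\phi:A\to B$, property~(1) of the module structure in the Preliminaries gives $\phi(f\cdot a)=f\cdot\phi(a)$ for all $f\in B(G)$, $a\in A$. Hence if $a\in A_E$, then for every $g\in E$ we have $g\cdot\phi(a)=\phi(g\cdot a)=\phi(0)=0$, so $\phi(a)\in B_E=\ker Q^E_B$. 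This is exactly the containment $\ker Q^E_A=A_E\subset\phi^{-1}(B_E)=\ker Q^E_B\circ\phi$ needed to induce the unique morphism $\phi^E$.

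With these three verifications complete, the hypotheses set out before \defnref{decreasing defn} are satisfied and the decreasing condition holds, so $\tau_E:(A,\delta)\mapsto(A^E,\delta^E)$ is a decreasing coaction functor by \lemref{dec coact}. The only genuinely substantive step is the equivariance-to-module-map translation in the second point; but since that translation is supplied ready-made by property~(1) of the $B(G)$-module structure, the argument reduces to the one-line computation with $g\in E$ above, and everything else is bookkeeping already dispatched in the text.
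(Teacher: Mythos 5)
Your proposal is correct and follows essentially the same route as the paper: the paper's proof also cites the preceding discussion (that $A_E$ is a small, weakly invariant ideal) together with \lemref{dec coact}, and reduces everything to the single module computation $f\cdot\phi(a)=\phi(f\cdot a)=0$ for $a\in\ker Q^E_A$, $f\in E$, which is exactly your key step. The only difference is expository: you spell out the bookkeeping (surjectivity, equivariance, the decreasing condition) that the paper dispatches with the phrase ``by the above discussion.''
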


\begin{proof}
By the above discussion and \lemref{dec coact}, it suffices to observe that for any morphism $\phi:(A,\delta)\to (B,\epsilon)$ of coactions 
and
for all $a\in \ker Q^E_A$ and $f\in E$,
\begin{align*}
f\cdot \phi(a)
&=\phi(f\cdot a)=0,
\end{align*}
which implies that $\ker Q^E_A\subset \ker Q^E_B\circ \phi$.
\end{proof}

\begin{rem}
The above lemma should be compared with
\cite[Corollary~6.5 and Lemma~7.1]{BusEch}, 
\cite[Lemma~2.3]{BusEch2}, and
\cite[Lemma~A.3]{bgwexact}.
\end{rem}

\begin{ex}
$\tau_{B(G)}$ is the identity functor.
\end{ex}

\begin{ex}
$\tau_{B_r(G)}$ is naturally isomorphic to the normalization functor.
\end{ex}

\begin{ex}
The maximalization functor is not of the form $(A,\delta)\mapsto (A^E,\delta^E)$ for any large ideal $E$ of $B(G)$,
because the maximalization functor is not decreasing in the sense of \defnref{decreasing defn}.
\end{ex}

\begin{prop}\label{exact E functor}
For a large ideal $E$ of $B(G)$,
the coaction functor $\tau_E$ is exact if and only if
for every coaction $(A,\delta)$ and
every strongly invariant ideal $I$ of $A$,
\begin{equation}\label{test exact E}
I+A_E\supset \{a\in A:E\cdot a\subset I\}.
\end{equation}
\end{prop}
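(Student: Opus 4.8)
The plan is to invoke \propref{decreasing exact}, which applies because $\tau_E$ is a decreasing coaction functor by \propref{E coaction functor}. That proposition reduces exactness of $\tau_E$ to checking, for every short exact sequence
\[
0\to(I,\delta_I)\to(A,\delta)\to(B,\delta^I)\to0
\]
of coactions --- equivalently, for every coaction $(A,\delta)$ and every strongly invariant ideal $I$ of $A$, with inclusion $\phi:I\hookrightarrow A$ and quotient map $\psi:A\to B=A/I$ --- the two conditions $\phi(I_E)=\phi(I)\cap A_E$ and $\phi(I)+A_E\supset\psi\inv(B_E)$, where $I_E=\ker Q^E_I$ and $B_E=\ker Q^E_B$. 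I would show that the first condition is automatic and that the second is precisely \eqref{test exact E}.

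First I would dispose of $\phi(I_E)=\phi(I)\cap A_E$. Identifying $\phi$ with the inclusion $I\hookrightarrow A$, the observation is that the $B(G)$-module structure on $I$ coming from the restricted coaction $\delta_I$ is just the restriction of the module structure on $A$: for $x\in I$ and $f\in B(G)$ one has $f\cdot x=(\id\otimes f)\delta_I(x)=(\id\otimes f)\delta(x)$, since $\delta_I=\delta|_I$. Hence $I_E=\{x\in I:E\cdot x=0\}=I\cap A_E=\phi(I)\cap A_E$, so this condition requires nothing beyond the definitions.

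The substance is the second condition, and the step I would treat as the crux is the identification of $\psi\inv(B_E)$. Since $\psi$ is $\delta-\delta^I$ equivariant, the characterization of equivariance via the $B(G)$-module structure gives $\psi(f\cdot a)=f\cdot\psi(a)$ for all $f\in B(G)$ and $a\in A$. Therefore $a\in\psi\inv(B_E)$ if and only if $E\cdot\psi(a)=\{0\}$, if and only if $\psi(f\cdot a)=0$ for every $f\in E$, if and only if $f\cdot a\in I$ for every $f\in E$, that is, if and only if $E\cdot a\subset I$. This yields $\psi\inv(B_E)=\{a\in A:E\cdot a\subset I\}$, so the inclusion $\phi(I)+A_E\supset\psi\inv(B_E)$ is literally $I+A_E\supset\{a\in A:E\cdot a\subset I\}$. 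Reading this identity in both directions gives the desired biconditional.

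The only point demanding genuine care is the equivariance bookkeeping just described; everything else is formal once \propref{decreasing exact} is in hand. In particular one must keep track that the module action defining $B_E$ on the quotient $B=A/I$ is the one induced by the quotient coaction $\delta^I$, which is exactly what makes $\psi$ equivariant and thereby drives the computation of $\psi\inv(B_E)$.
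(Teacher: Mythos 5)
Your proof is correct and takes essentially the same route as the paper's: both reduce to \propref{decreasing exact} after replacing the sequence by the isomorphic one given by an ideal inclusion and quotient map, both observe that $\phi(I_E)=\phi(I)\cap A_E$ holds automatically since the module structure on $I$ restricts that of $A$, and both identify $\psi\inv(B_E)$ with $\{a\in A:E\cdot a\subset I\}$ via the induced module structure on $A/I$. The only cosmetic difference is that the paper computes $B_E=\{a+I:E\cdot a\subset I\}$ directly, whereas you route the same calculation through the equivariance relation $\psi(f\cdot a)=f\cdot\psi(a)$.
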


\begin{proof}
Let
\begin{equation}\label{seq}
\xymatrix{
0\ar[r] & (I,\zeta) \ar[r]^\phi & (A,\delta) \ar[r]^\psi & (B,\epsilon) \ar[r] &0
}
\end{equation}
be a short exact sequence of coactions.
Exactness of the associated sequence
\begin{equation}\label{E seq}
\xymatrix{
0\ar[r] & I^E \ar[r]^{\phi^E} & A^E \ar[r]^{\psi^E} & B^E \ar[r] &0
}
\end{equation}
will not be affected if we replace the short exact sequence \eqref{seq} by an isomorphic one, so without loss of generality 
$\phi$ is the inclusion of an ideal $I$ of $A$
and $\psi$ is the quotient map onto $B=A/I$.
By \propref{decreasing exact},
the sequence \eqref{E seq} is exact if and only if
\begin{equation}\label{IE}
I_E=I\cap A_E
\end{equation}
and
\begin{equation}\label{AE}
I+A_E\supset \psi\inv(B_E).
\end{equation}
Since
\[
I_E=\{a\in I:E\cdot a=\{0\}\},
\]
\eqref{IE} automatically holds in this context.
On the other hand, \eqref{AE} is equivalent to \eqref{test exact E} because
\begin{align*}
B_E
&=\{a+I\in B=A/I:E\cdot (a+I)=\{0\}\}
\\&=\{a+I:E\cdot a\subset I\}.
\qedhere
\end{align*}
\end{proof}

\begin{rem}
Techniques similar those used in the above proof, showing that \eqref{IE} holds automatically,
can also be used to show that
the functor $\tau_E$ preserves injectivity of morphisms:
if $\phi:A\to B$ is an injective equivariant homomorphism
and $a\in \ker \phi^E$,
then we can write $a=Q^E_A(a')$ for some $a'\in A$.
We have
\[
0=\phi^E(a)=\phi^E\circ Q^E_A(a')=Q^E_B\circ \phi(a'),
\]
so
\[
\phi(a)\in \ker Q^E_B=B_E.
\]
Thus for all $f\in E$ we have
\[
0=f\cdot \phi(a')=\phi(f\cdot a'),
\]
so $f\cdot a'=0$ since $\phi$ is injective.
But then $a'\in A_E=\ker Q^E_A$, so $a=0$.
This remark should be compared with \cite[Proposition~6.2]{BusEch}.
\end{rem}

\begin{cor}\label{intersect}
Let $E$ and $F$ be large ideals of $B(G)$,
and let $\<EF\>$ denote the weak*-closed linear span of the set $EF$ of products.
If $\tau_E$ or $\tau_F$ is exact then $\<EF\>=E\cap F$.
\end{cor}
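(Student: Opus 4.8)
The plan is to deduce the non-trivial inclusion $E\cap F\subseteq\<EF\>$ by transporting the question into $C^*(G)$ and applying the exactness test of \propref{exact E functor} to the canonical coaction $(C^*(G),\delta_G)$, using the $B(G)$-module description. First, the opposite inclusion is automatic: since $E$ and $F$ are ideals, $EF\subseteq E\cap F$, and $E\cap F$ is weak* closed, so $\<EF\>\subseteq E\cap F$. As $\<EF\>$ and $E\cap F$ are weak* closed subspaces of $B(G)=C^*(G)^*$, the bipolar theorem reduces the claim to the reverse inclusion of preannihilators $\pann\<EF\>\subseteq\pann(E\cap F)$ in $C^*(G)$. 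Here $\pann E=\ker q_E$ and $\pann F=\ker q_F$ are closed ideals, their sum $\pann E+\pann F$ is again a closed ideal, and $\pann(E\cap F)=\overline{\pann E+\pann F}=\pann E+\pann F$; so the goal becomes $\pann\<EF\>\subseteq\pann E+\pann F$.

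Next I would rewrite both sides through the module structure on $C^*(G)$ induced by $\delta_G$, whose key feature is that it is adjoint to multiplication in $B(G)$: $\<f\cdot c,g\>=\<c,fg\>$ for $c\in C^*(G)$ and $f,g\in B(G)$. Since $E$ is an ideal and $1\in B(G)$ we have $EB(G)=E$, so $E\cdot c=0$ exactly when $c$ annihilates $EB(G)=E$; thus $A_E=(C^*(G))_E=\pann E$. Using again that $F$ is an ideal, one checks for $c\in C^*(G)$ that $E\cdot c\subseteq\pann F$ if and only if $\<c,fg\>=0$ for all $f\in E$, $g\in F$; that is, $\{c:E\cdot c\subseteq\pann F\}=\pann(EF)=\pann\<EF\>$. (These are routine module manipulations, using $gh\in F$ for $g\in F$, $h\in B(G)$.)

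With these identifications the target inclusion $\pann\<EF\>\subseteq\pann E+\pann F$ is precisely condition \eqref{test exact E} of \propref{exact E functor} for the coaction $(C^*(G),\delta_G)$ and the ideal $I=\pann F$: it reads $\{c:E\cdot c\subseteq I\}\subseteq I+A_E$. Granting this, we obtain $\pann\<EF\>\subseteq\pann E+\pann F=\pann(E\cap F)\subseteq\pann\<EF\>$, so all three coincide; taking annihilators yields $\<EF\>=E\cap F$. The hypothesis ``$\tau_F$ exact'' is handled identically after interchanging $E$ and $F$, since $B(G)$ is commutative and $\<EF\>=\<FE\>$.

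The main obstacle is the legitimacy of that last invocation: \propref{exact E functor} delivers \eqref{test exact E} only for \emph{strongly} invariant ideals $I$, whereas $\pann F$ is a priori merely weakly $\delta_G$-invariant. In fact $(C^*(G),\delta_G)$ admits no non-trivial strongly invariant ideals (its coaction crossed product is $\KK$), so one cannot simply present $0\to\pann F\to C^*(G)\to C^*_F(G)\to 0$ as a short exact sequence of coactions and quote the criterion. The real content is therefore to show that exactness of $\tau_E$ forces surjectivity of the induced map $(C^*(G))_E\to(C^*_F(G))_E$, equivalently \eqref{test exact E} for this weakly invariant $I$. I would attack this either by arguing that exactness propagates from strongly invariant to weakly invariant ideals (e.g.\ by a maximalization argument transporting the condition along $q^m$), or by exhibiting $C^*_F(G)$ as the quotient in a genuine short exact sequence of coactions built from a dual coaction, to which \propref{exact E functor} applies directly.
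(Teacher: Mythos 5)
Your constructive steps reproduce the paper's proof essentially verbatim. The paper also proves the claim by establishing $\pann E+\pann F=\pann\<EF\>$: the containment $\pann E+\pann F\subset\pann\<EF\>$ comes from $EF\subset E\cap F$ and weak*-closedness, and the reverse containment is obtained by invoking \propref{exact E functor} with $(A,\delta)=(C^*(G),\delta_G)$ and $I=\pann F$, after identifying $\{a\in C^*(G):E\cdot a\subset\pann F\}$ with $\pann\<EF\>$ --- the paper does this identification via weak*-weak continuity of $f\mapsto f\cdot a$, which is the same computation as your pairing identity $\<f\cdot a,g\>=\<a,fg\>$. The endgame is identical too: $\pann E+\pann F$ is closed because a sum of closed ideals in a $C^*$-algebra is closed, hence equals $\pann(E\cap F)$ by elementary duality, and one takes annihilators.

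Concerning the obstacle in your final paragraph: the paper does exactly the thing you hesitate to do --- it applies \propref{exact E functor} to $I=\pann F$ with no discussion of invariance --- and your objection is mathematically sound. For a proper large ideal $F$ one has $0\ne\pann F\subset\ker\lambda=\ker j_{C^*(G)}$, and in fact $(C^*(G),\delta_G)$ has no nontrivial proper strongly invariant ideals at all: $C^*(G)\rtimes_{\delta_G}G\cong\KK(L^2(G))$ is simple, Nilsen's sequence would present $I\rtimes_{\delta_I}G$ as an ideal with quotient $(C^*(G)/I)\rtimes G$, and a nonzero nondegenerate coaction always has a nonzero crossed product, so $I$ and $C^*(G)/I$ cannot both be nonzero. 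Thus, as literally stated (with exactness tested on short exact sequences of nondegenerate coactions, equivalently on strongly invariant ideals), \propref{exact E functor} does not license the step, and the concern you raise applies verbatim to the paper's own proof rather than being a defect peculiar to your attempt. One further point: your second suggested repair --- realizing $C^*_F(G)$ as the quotient in a genuine short exact sequence of coactions --- cannot work, for the same simplicity reason; neither $(C^*(G),\delta_G)$ nor $(C^*_F(G),\delta_G^F)$ nor anything Morita equivalent to them admits a nontrivial such sequence. So any honest repair must go through your first route: showing that exactness of $\tau_E$ (or a suitably strengthened formulation of \propref{exact E functor}) controls \emph{weakly} invariant ideals, whose restricted ``coactions'' are degenerate --- which is evidently what the authors take for granted.
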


\begin{proof}
Without loss of generality assume that $\tau_E$ is exact.
Note that, since $E$ is an ideal of $B(G)$,
\[
\pann E=\{a\in C^*(G):E\cdot a=\{0\}\},
\]
and similarly for $\pann F$.
Claim:
\[
\pann E+\pann F=\pann \<EF\>.
\]
To see this, note that, since $E$ is exact, by \propref{exact E functor}
with 
$(A,\delta)=(C^*(G),\delta_G)$ and
$I=\pann F$
we have
\[
\pann F+\pann E\supset \{a\in C^*(G):E\cdot a\subset \pann F\}.
\]
Now, for $a\in C^*(G)$ we have
\begin{align*}
E\cdot a\subset \pann F
&\iff F\cdot (E\cdot a)=\{0\}
\\&\iff (EF)\cdot a=\{0\}
\\&\overset{*}{\iff} \<EF\>\cdot a=\{0\}
\\&\iff a\in \pann \<EF\>,
\end{align*}
where the equivalence at * holds since
for every $a\in C^*(G)$ the map from $B(G)$ to $C^*(G)$ defined by
$f\mapsto f\cdot a$
is weak*-weak continuous.
Thus $\pann F+\pann E\supset \pann \<EF\>$.

For the reverse containment, note that
$EF\subset E$ because $E$ is an ideal,
so $\<EF\>\subset E$ because $E$ is weak*-closed,
and hence $\pann E\subset \pann \<EF\>$.
Similarly, $\pann F\subset \pann \<EF\>$, and so $\pann E+\pann F\subset \pann \<EF\>$,
proving the claim.

Now,
since $\pann E$ and $\pann F$ are closed ideals of $C^*(G)$,
it follows from the elementary duality theory for Banach spaces that
\[
\pann E+\pann F=\pann (E\cap F),
\]
and the corollary follows upon taking annihilators.
\end{proof}

The following result should be compared with 
\cite[Lemma~A.5]{bgwexact}:

\begin{prop}\label{morita}
The coaction functor $\tau_E$ is Morita compatible.
\end{prop}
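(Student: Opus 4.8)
The plan is to use the criterion established in \propref{decreasing morita}, since $\tau_E$ is a decreasing coaction functor by \propref{E coaction functor}. That is, given an $(A,\delta)-(B,\epsilon)$ imprimitivity bimodule $(X,\zeta)$, it suffices to produce an $A^E-B^E$ imprimitivity bimodule $X^E$ together with a $Q^E_A-Q^E_B$ compatible imprimitivity-bimodule homomorphism $Q^E_X:X\to X^E$. The natural candidate is to extend the $B(G)$-module structure from $C^*$-algebras to imprimitivity bimodules and define $X^E$ as the quotient of $X$ by the ``$E$-null'' submodule, in complete analogy with the definition $A_E=\{a\in A:E\cdot a=\{0\}\}$.

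First I would set up the $B(G)$-module structure on $X$. Since $\zeta$ is a $\delta-\epsilon$ compatible coaction on $X$, the same slice-map formula $f\cdot x=(\id\otimes f)\circ\zeta(x)$ for $f\in B(G)$, $x\in X$ defines an action of $B(G)$ on $X$ that is compatible with the module structures on $A$ and $B$, in the sense that $f\cdot({}_A\<x,y\>\,z)$ and $f\cdot(x\,\<y,z\>_B)$ decompose via the coproduct on $B(G)$. Then I would define
\[
X_E=\{x\in X:E\cdot x=\{0\}\}
\]
and set $X^E=X/X_E$ with quotient map $Q^E_X:X\to X^E$. The key structural fact to check is that $X_E$ is an $A_E-B_E$ compatible subspace, so that the Rieffel correspondence identifies $X_E$ with the submodule $X\cdot B_E$ (equivalently $A_E\cdot X$); this is exactly the statement that $X^E$ becomes an $A^E-B^E$ imprimitivity bimodule with $Q^E_X$ being $Q^E_A-Q^E_B$ compatible.

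The most efficient route, and the one I would actually carry out, is the \emph{linking-algebra} device already used in \lemref{id}. Form the linking algebra $L=L(X)=\smtx{A&X\\{*}&B}$; the coaction $\zeta$ together with $\delta$ and $\epsilon$ assembles into a single coaction $\mu$ on $L$, and the corner projections are equivariant. Applying the already-established functor $\tau_E$ to $(L,\mu)$ produces $L^E=L/L_E$ with quotient coaction $\mu^E$. The point is that the $B(G)$-module structure respects the corners of $L$, so $L_E$ decomposes as $\smtx{A_E&X_E\\{*}&B_E}$, and hence $L^E=L(X^E)$ is itself the linking algebra of an $A^E-B^E$ imprimitivity bimodule $X^E$, with the quotient map $Q^E_L$ restricting on corners to $Q^E_A$, $Q^E_B$, and the desired $Q^E_X$. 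Compatibility of $Q^E_X$ with $Q^E_A$ and $Q^E_B$ then follows from simple algebraic manipulation of the corner decomposition, exactly as in \lemref{id}.

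The main obstacle, and the one step deserving genuine care, is verifying that $L_E$ decomposes diagonally as $\smtx{A_E&X_E\\{*}&B_E}$ with the inner products behaving correctly --- i.e., that $L_E\cap X=X_E$ and that the corner $A$-part of $L_E$ is exactly $A_E$, and similarly for $B$. This amounts to checking that the $B(G)$-module action on $L$ restricts to the component-wise action on the corners, which reduces to the fact that the coproduct-compatibility of $\zeta$ with $\delta,\epsilon$ makes $f\cdot(\cdot)$ preserve the corner grading; this uses the strict continuity of the slice maps (\cite[Lemma~1.5]{lprs}) to pass to the multiplier algebra $M(L)$ where the corner projections live. Once this corner-preservation is in hand, everything else is formal, and Morita compatibility of $\tau_E$ follows from \propref{decreasing morita}.
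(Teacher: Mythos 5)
Your proof is correct, but it takes a genuinely different route from the paper's. Both arguments begin identically: by \propref{E coaction functor} and \propref{decreasing morita}, it suffices to produce an $A^E-B^E$ imprimitivity bimodule $X^E$ and a $Q^E_A-Q^E_B$ compatible homomorphism $Q^E_X:X\to X^E$, equivalently to show $X\dashind B_E=A_E$. The paper then works directly on the bimodule: it forms the external tensor product $X\otimes C^*_E(G)$ and observes that $(\id_X\otimes q_E)\circ\zeta:X\to M(X\otimes C^*_E(G))$ is an imprimitivity-bimodule homomorphism compatible with $(\id_A\otimes q_E)\circ\delta$ and $(\id_B\otimes q_E)\circ\epsilon$, whose coefficient homomorphisms have kernels exactly $A_E$ and $B_E$, so that \cite[Lemma~1.20]{enchilada} gives $X\dashind B_E=A_E$ in one stroke. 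You instead pass to the linking algebra: you apply the already-established functor $\tau_E$ to the canonical coaction on $L(X)$ and show that the kernel ideal $L_E$ decomposes as $\smtx{A_E&X_E\\{*}&B_E}$ because the corner projections are equivariant, so the $B(G)$-module action preserves the corner grading; since an ideal of a linking algebra automatically has Rieffel-linked diagonal corners and the quotient by such an ideal is again a linking algebra, this yields $X^E$ and $Q^E_X$. Your argument is the exact analogue of the paper's own proof of \lemref{id} (Morita compatibility of the identity functor), so the precedent is sound; its cost is reliance on the standard but here-unproved facts about coactions on linking algebras and quotients of linking algebras, whereas the paper's tensor-product trick is shorter and needs only the one kernel lemma from \cite{enchilada}. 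What your route buys is uniformity: the bimodule statement is reduced to the functoriality of $\tau_E$ on $C^*$-algebra coactions, and the same scheme would apply verbatim to any decreasing coaction functor whose defining module condition is corner-wise on linking algebras.
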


\begin{proof}
Let $(X,\zeta)$ be an $(A,\delta)-(B,\epsilon)$ imprimitivity bimodule.
Since $\tau$ is decreasing, by \lemref{decreasing morita}, it suffices to show that
$X\dashind B_E=A_E$.
The external tensor product $X\otimes C^*_E(G)$ is an $(A\otimes C^*_E(G))-(B\otimes C^*_E(G))$ imprimitivity bimodule, and we have an $(\id_A\otimes q_E)-(\id_B\otimes q_E)$ compatible imprimitivity bimodule homomorphism
\[
\id_X\otimes q_E:X\otimes C^*(G)\to X\otimes C^*_E(G).
\]
The composition
\[
(\id_X\otimes q_E)\circ\zeta:X\to M(X\otimes C^*_E(G))
\]
is an
$(\id_A\otimes q_E)\circ\delta-(\id_B\otimes q_E)\circ\epsilon$
compatible imprimitivity bimodule homomorphism.
We have
\begin{align*}
\ker (\id_A\otimes q_E)\circ\delta&=A_E\\
\ker (\id_B\otimes q_E)\circ\epsilon&=B_E.
\end{align*}
Thus, by \cite[Lemma~1.20]{enchilada}, $A_E$ is the ideal of $A$ associated to the ideal $B_E$ of $B$ via the Rieffel correspondence.
\end{proof}

\begin{rem}
\propref{morita} subsumes \cite[Lemma~4.8]{exotic}, which is the special case of exterior equivalent coactions.
It is tempting to try to use this to simplify the proof of \cite[Theorem~4.6]{exotic}, which says that $(A,\delta)$ satisfies $E$-crossed-product duality if and only if it is isomorphic to $({A^m}^E,{\delta^m}^E)$,
since we have Morita equivalences
\[
(A^m,\delta^m)\sim_M
(A^m\otimes \KK,\delta\otimes_*\id)\sim_M
(A\rtimes_\delta G\rtimes_{\what\delta} G,\what{\what\delta}).
\]
However, it turns out that appealing to \propref{morita} would not shorten the proof of \cite[Theorem~4.6]{exotic} much.
Nevertheless, it is interesting to note that,
by \propref{morita},
we have
\begin{align*}
(A,\delta)=({A^m}^E,{\delta^m}^E)
&\iff
(A\otimes\KK,\delta\otimes_*\id)=((A^m\otimes\KK)^E,(\delta^m\otimes_*\id)^E),
\end{align*}
equivalently
\[
\ker\Phi=(A\rtimes_\delta G\rtimes_{\what\delta} G)_E,
\]
which by definition is equivalent to $E$-crossed-product duality for $(A,\delta)$.
\end{rem}

For some purposes,
albeit not for
the purposes of this paper, a more appropriate coaction functor associated to $E$ is the following (see also \cite[Theorem~5.1]{BusEch}):

\begin{defn}
The \emph{$E$-ization} of a coaction $(A,\delta)$ is
\[
(A^\ize,\delta^\ize):=\bigl((A^m)^E,(\delta^m)^E\bigr).
\]
\end{defn}

$E$-ization is 
a functor on the category of coactions,
being
the composition of the functors maximalization and $\tau_E$.
The $E$-ization of a $\delta-\epsilon$ equivariant homomorphism $\phi:A\to B$
is
\[
\phi^\ize=(\phi^m)^E:A^{mE}\to B^{mE}.
\]

\begin{prop}
$E$-ization is a coaction functor.
\end{prop}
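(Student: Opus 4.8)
The plan is to verify that $E$-ization satisfies the two requirements of Definition \ref{coaction functor}: we must exhibit a natural transformation $q^{\ize}$ from maximalization to $E$-ization, and we must check that for every coaction $(A,\delta)$ the map $q^{\ize}_A$ is a surjection with $\ker q^{\ize}_A \subset \ker \Lambda_{A^m}$. Since $E$-ization is already known to be a functor (being the composition of maximalization with $\tau_E$, both of which are functors), the only work is the existence of the natural transformation and the verification of the two structural conditions.

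First I would construct $q^{\ize}_A$. The obvious candidate uses the maximalization functor applied to $A^m$: since maximalization is a functor, there is a maximalizing map $q^m_{A^m}:(A^m)^m \to A^m$, but in fact the cleanest route is to observe that $A^m$ is already maximal, so its maximalization is (naturally isomorphic to) itself via the identity. Thus the candidate natural transformation is
\[
q^{\ize}_A := Q^E_{A^m}:A^m \to (A^m)^E = A^{\ize},
\]
the quotient map coming from $\tau_E$ applied to the maximal coaction $(A^m,\delta^m)$. This is surjective because $Q^E_{A^m}$ is a quotient map by the construction of $\tau_E$ in \propref{E coaction functor}. Naturality follows from naturality of $Q^E$ (the natural transformation $\id \to \tau_E$ from \secref{decreasing}) precomposed with naturality of the maximalization functor $\phi \mapsto \phi^m$: for a morphism $\phi:(A,\delta)\to(B,\epsilon)$, the square relating $q^{\ize}_A$ and $q^{\ize}_B$ commutes because it decomposes into the naturality square for $\phi^m$ under maximalization followed by the naturality square for $Q^E$ applied to $\phi^m$.

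Next I would verify condition (2), the small-kernel requirement $\ker q^{\ize}_A \subset \ker \Lambda_{A^m}$. Here $\ker q^{\ize}_A = \ker Q^E_{A^m} = (A^m)_E$, and by the discussion preceding \propref{E coaction functor} the ideal $(A^m)_E$ is a \emph{small} ideal of $A^m$, meaning precisely that it is contained in $\ker \Lambda_{A^m}$. So this condition holds automatically from the general fact that $A_E$ is always small. Condition (1), surjectivity, was already noted. Thus all the defining properties of a coaction functor are met.

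The main obstacle, such as it is, is bookkeeping rather than genuine difficulty: one must be careful about the distinction between $(A^m)^E$ as the value of $E$-ization at $(A,\delta)$ versus the value of $\tau_E$ at the already-maximal coaction $(A^m,\delta^m)$, and about identifying the maximalization of $A^m$ with $A^m$ itself (up to the natural isomorphism discussed in \remref{choice}). The cleanest presentation observes that $E$-ization is literally $\tau_E \circ m$ where $m$ is maximalization, and then the natural transformation $q^{\ize}$ is obtained by applying $\tau_E$'s defining natural transformation $q^\tau$ to the maximalization functor, or equivalently as $Q^E$ evaluated at maximal coactions; since $\tau_E$ is itself a coaction functor (\propref{E coaction functor}) and maximalization is a coaction functor, the verification reduces to the observations above with no new computation required.
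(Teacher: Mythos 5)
Your proof is correct and follows essentially the same route as the paper: both take $q^{\ize}_A = Q^E_{A^m}$, obtain naturality from the natural transformation $Q^E:\id\to\tau_E$ evaluated at maximalized morphisms, and get surjectivity and the small-kernel condition $\ker q^{\ize}_A\subset\ker\Lambda_{A^m}$ from the fact that $\tau_E$ is a decreasing coaction functor. Your write-up merely spells out explicitly the conditions that the paper's one-line proof leaves implicit.
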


\begin{proof}
We must produce a suitable natural transformation $q^{\ize}:(A^m,\delta^m)\to (A^\ize,\delta^\ize)$,
and we take
\[
q^{\ize}_A=Q^E_{A^m}:A^m\to A^{mE}=A^\ize.
\]
$q^{\ize}$ is natural
since $\tau_E$ is a decreasing coaction functor.
\end{proof}

\begin{thm}\label{E morita}
For any large ideal $E$ of $B(G)$, the $E$-ization coaction functor is Morita compatible,
\end{thm}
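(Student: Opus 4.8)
The plan is to reduce Morita compatibility of $E$-ization to that of $\tau_E$ (\propref{morita}), applied to the maximal coactions produced by \lemref{Xm}. The starting observation is the identification of the relevant ideal: by definition the natural surjection for $E$-ization is $q^{\ize}_A=Q^E_{A^m}$, so the ideal of $A^m$ attached to this functor is $A^m_{\ize}=\ker q^{\ize}_A=\ker Q^E_{A^m}=(A^m)_E$. Hence, given an $(A,\delta)-(B,\epsilon)$ imprimitivity bimodule $(X,\zeta)$ with associated $A^m-B^m$ imprimitivity bimodule $X^m$ from \lemref{Xm}, verifying the Rieffel-correspondence condition \eqref{induce} for $E$-ization amounts exactly to showing $X^m\dashind (B^m)_E=(A^m)_E$.

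The key move is then to regard $(X^m,\zeta^m)$ as an imprimitivity bimodule between the \emph{maximal} coactions $(A^m,\delta^m)$ and $(B^m,\epsilon^m)$, and to apply \propref{morita} to it. Since $\tau_E$ is a decreasing coaction functor that is Morita compatible, \propref{decreasing morita} furnishes for this bimodule an $(A^m)^E-(B^m)^E$ imprimitivity bimodule $(X^m)^E$ together with a $Q^E_{A^m}-Q^E_{B^m}$ compatible imprimitivity-bimodule homomorphism $Q^E_{X^m}\colon X^m\to (X^m)^E$; equivalently (the form its proof actually establishes via \lemref{X tau} and \cite[Lemma~1.20]{enchilada}) one has $X^m\dashind (B^m)_E=(A^m)_E$, which is precisely the identity required in the previous paragraph.

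To conclude I would feed this data into the coaction-functor criterion \lemref{X tau} for the $E$-ization functor itself. Setting $X^{\ize}:=(X^m)^E$, which is an $A^{\ize}-B^{\ize}$ imprimitivity bimodule since $A^{\ize}=A^{mE}=(A^m)^E$ and likewise for $B$, and $q^{\ize}_X:=Q^E_{X^m}\colon X^m\to X^{\ize}$, and recalling that $q^{\ize}_A=Q^E_{A^m}$ and $q^{\ize}_B=Q^E_{B^m}$, the homomorphism $q^{\ize}_X$ is automatically $q^{\ize}_A-q^{\ize}_B$ compatible. By \lemref{X tau}, $E$-ization is Morita compatible.

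The main obstacle is purely bookkeeping rather than analytic: one must keep straight that the single bimodule $X^m$ from \lemref{Xm} plays two distinct roles here --- it is both the maximalization bimodule appearing in \defnref{morita functor} for the $E$-ization functor, and the \emph{input} bimodule (between the already-maximal coactions $A^m$ and $B^m$) to which the Morita compatibility of $\tau_E$ is applied --- and that, because $A^m$ and $B^m$ are maximal, the maximalization of $X^m$ demanded when invoking \propref{morita} may be taken to be $X^m$ itself, by the remark following \defnref{morita functor}. Once these identifications are pinned down, no further manipulation of the coaction structure is needed.
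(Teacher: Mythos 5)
Your proof is correct and follows essentially the same route as the paper's: both identify $\ker q^{\ize}_A$ with $(A^m)_E$ and then apply the Morita compatibility of $\tau_E$ (\propref{morita}) to the maximalized bimodule $(X^m,\zeta^m)$. The extra bookkeeping you supply---passing through \lemref{X tau} and the remark following \defnref{morita functor} to justify that $X^m$ may serve as its own maximalization---is exactly what the paper's one-line proof leaves implicit.
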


\begin{proof}
Let $(X,\zeta)$ be an $(A,\delta)-(B,\epsilon)$ imprimitivity bimodule,
with associated $(A^m,\delta^m)-(B^m,\epsilon^m)$ imprimitivity bimodule $(X^m,\zeta^m)$.
We must show that
\[
X^m\dashind \ker q^{\ize}_B=\ker q^{\ize}_A.
\]
But this follows immediately by applying \propref{morita} to $(X^m,\zeta^m)$,
since
$q^{\ize}_A=Q_E^{A^m}$ and
$q^{\ize}_B=Q_E^{B^m}$.
\end{proof}

\begin{rem}\label{mE}
For any large ideal $E$,
the two coaction functors $\tau_E$ and $E$-ization have similar properties,
e.g., they are both Morita compatible
(\propref{morita} and \thmref{E morita}).
However, in general they are not naturally isomorphic functors. For example, if $E=B(G)$ then 
$\tau_E$ is the identity functor and $E$-ization is maximalization.
That being said, for $E=B_r(G)$ we do have $\tau_E\cong \tau_E\circ \text{maximalization}$.
\end{rem}

Note that, given a coaction $(A,\delta)$, we have two homomorphisms of the maximalization $(A^m,\delta^m)$:
\[
\xymatrix{
(A^m,\delta^m) \ar[d]^{q^m} \ar[dr]^{q^{\ize}}
\\
(A,\delta)
&(A^\ize,\delta^\ize);
}
\]
in \cite[Definition~3.7]{graded} we said $(A,\delta)$ is \emph{$E$-determined from its maximalization} if $\ker q^m=\ker q^{\ize}$, in which case there is a natural isomorphism $(A,\delta)\cong (A^\ize,\delta^\ize)$.

Given an action $(B,\alpha)$, in \cite[Definition~6.1]{graded} we defined the \emph{$E$-crossed product} as
\[
B\rtimes_{\alpha,E} G=(B\rtimes_\alpha G)/(B\rtimes_\alpha G)_E=(B\rtimes_\alpha G)^E,
\]
where in the last expression we have composed the full-crossed-product functor with $\tau_E$.

As in \cite[Definition~4.5]{BusEch}, we say a coaction $(A,\delta)$ satisfies \emph{$E$-duality} (called ``$E$-crossed product duality'' in \cite[Definition~4.3]{exotic}), or is an \emph{$E$-coaction}, if
there is an isomorphism $\theta$ making the diagram
\[
\xymatrix{
A\rtimes_\delta G\rtimes_{\what\delta} G \ar[r]^-\Phi \ar[d]_{Q_E}
&A\otimes\KK
\\
A\rtimes_\delta G\rtimes_{\what\delta,E} G \ar@{-->}[ur]_\theta^\simeq
}
\]
commute,
equivalently
\[
\ker\Phi=(A\rtimes_\delta G\rtimes_{\what\delta} G)_E,
\]
where $\Phi$ is the canonical surjection.

In \cite[Theorem~4.6]{exotic} 
we proved that $(A,\delta)$ is an $E$-coaction if and only if it is $E$-determined from its maximalization.
(\cite[Theorem~5.1]{BusEch} proves the converse direction.)

\begin{lem}\label{determined}
For a coaction $(A,\delta)$, the following are equivalent:
\begin{enumerate}
\item
$(A,\delta)$ is an $E$-coaction

\item
$(A,\delta)$ is $E$-determined from its maximalization.

\item
There exists a maximal coaction $(B,\epsilon)$ such that $(A,\delta)\cong (B^E,\epsilon^E)$.
\end{enumerate}
\end{lem}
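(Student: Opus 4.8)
The plan is to treat the equivalence (1)$\iff$(2) as already established --- it is precisely \cite[Theorem~4.6]{exotic}, quoted just before the lemma --- and to concentrate on (2)$\iff$(3). The two tools I will lean on are the two maximalization maps attached to a coaction $(A,\delta)$, namely $q^m_A\colon A^m\to A$ and the $E$-ization map $q^\ize_A=Q^E_{A^m}\colon A^m\to (A^m)^E=A^\ize$, together with the corollary following \lemref{axiom}: applied to the coaction functor $E$-ization it says that $q^\ize_A$ is a maximalization of $(A^\ize,\delta^\ize)$, and applied to the decreasing coaction functor $\tau_E$ it says that $q^{\tau_E}_A=Q^E_A\circ q^m_A\colon A^m\to A^E$ is a maximalization of $(A^E,\delta^E)$.

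For (2)$\Rightarrow$(3): if $(A,\delta)$ is $E$-determined from its maximalization then by definition $\ker q^m_A=\ker q^\ize_A$. Since $q^m_A$ and $q^\ize_A$ are surjections out of $A^m$ with the same (weakly invariant) kernel, they induce an isomorphism $(A,\delta)\cong(A^\ize,\delta^\ize)=\bigl((A^m)^E,(\delta^m)^E\bigr)$, equivariance being automatic because both quotient coactions are the unique ones making the respective quotient maps equivariant. Taking $(B,\epsilon)=(A^m,\delta^m)$, which is maximal, then gives $(A,\delta)\cong(B^E,\epsilon^E)$, which is (3).

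For (3)$\Rightarrow$(2), the substantive direction: suppose $(B,\epsilon)$ is maximal with $(A,\delta)\cong(B^E,\epsilon^E)$. Applying the corollary following \lemref{axiom} to $\tau_E$ and $(B,\epsilon)$ shows that $q^{\tau_E}_B=Q^E_B\circ q^m_B\colon B^m\to B^E$ is a maximalization of $(B^E,\epsilon^E)$; since $B$ is maximal, $q^m_B\colon B^m\to B$ is an isomorphism, so this maximalization is carried by $Q^E_B\colon B\to B^E$. Transporting structure along $(A,\delta)\cong(B^E,\epsilon^E)$ and using uniqueness of maximalizations, I identify $(A^m,\delta^m)\cong(B,\epsilon)$ in such a way that $q^m_A$ corresponds to $Q^E_B$. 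On the other hand, by definition $A^\ize=(A^m)^E$ and $q^\ize_A=Q^E_{A^m}$, so under $A^m\cong B$ the map $q^\ize_A$ corresponds, by naturality of $Q^E$, again to $Q^E_B\colon B\to B^E$. Hence both $q^m_A$ and $q^\ize_A$ are identified with the single quotient map $Q^E_B$, so $\ker q^m_A=\ker q^\ize_A$ and $(A,\delta)$ is $E$-determined from its maximalization, which is (2). Combined with \cite[Theorem~4.6]{exotic}, all three conditions are equivalent.

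The principal obstacle is the bookkeeping in (3)$\Rightarrow$(2): one must verify that, after transporting along the given isomorphism $(A,\delta)\cong(B^E,\epsilon^E)$, the maximalization map $q^m_A$ and the $E$-ization map $q^\ize_A=Q^E_{A^m}$ coincide with the \emph{same} quotient $Q^E_B$ out of $B$. Everything hinges on recognizing, via the corollary following \lemref{axiom}, that for maximal $(B,\epsilon)$ the map $Q^E_B\colon B\to B^E$ is simultaneously a maximalization of $B^E$ and (definitionally, since $B^m\cong B$) the $E$-ization map; once that identification is secured, the remaining checks of equivariance of the induced isomorphisms are routine.
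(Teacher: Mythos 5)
Your proposal is correct and follows essentially the same route as the paper: (1)$\iff$(2) is quoted from \cite[Theorem~4.6]{exotic}, (2)$\Rightarrow$(3) is the easy direction via $(A,\delta)\cong(A^\ize,\delta^\ize)=\bigl((A^m)^E,(\delta^m)^E\bigr)$, and for (3)$\Rightarrow$(2) you recognize that $Q^E_B$ is a maximalization of $(B^E,\epsilon^E)\cong(A,\delta)$, invoke uniqueness of maximalizations to identify $(A^m,\delta^m)$ with $(B,\epsilon)$, and conclude $\ker q^m_A=\ker q^{\ize}_A$. The only cosmetic difference is that the paper justifies the key fact that $Q^E_B$ is a maximalization directly (maximality of $\epsilon$ together with $\ker Q^E_B\subset\ker q^n_B$), whereas you derive it from the corollary to \lemref{axiom} applied to $\tau_E$ plus the fact that $q^m_B$ is an isomorphism for maximal $(B,\epsilon)$; both rest on the same underlying machinery.
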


\begin{proof}
The equivalence of (1) and (2) is
\cite[Theorem~4.6]{exotic}, and
(2) trivially implies (3).
Assume (3), i.e., that $(B,\epsilon)$ is maximal and we have an isomorphism $\theta:(B^E,\epsilon^E)\to (A,\delta)$.
The surjection $Q_E^B:(B,\epsilon)\to (B^E,\epsilon^E)$ is a maximalization,
since $\epsilon$ is maximal and $\ker Q_E^B\subset \ker q^n_B$.
Thus $\theta\circ Q_E^B$ is a maximalization of $(A,\delta)$.
Since any two maximalizations of $(A,\delta)$ are isomorphic,
there is an isomorphism $\psi$ making the diagram
\[
\xymatrix{
(A^m,\delta^m) \ar[d]_{q^m_A}
&(B,\epsilon) \ar@{-->}[l]_-\psi^-\simeq \ar[d]^{Q_E}
\\
(A,\delta)
&(B^E,\epsilon^E) \ar[l]^-\theta_-\simeq
}
\]
commute.
Thus $q^m_A\circ\psi$ is also a maximalization of $(A,\delta)$.
Therefore
\begin{align*}
\ker q^m_A
&=\psi\bigl(\ker Q_E\bigr)
\\&=\psi\bigl(B_E\bigr)
\\&=A^m_E,
\end{align*}
giving (2).
\end{proof}

\begin{thm}\label{equivalence}
The functor $\tau_E$
restricts 
to give an equivalence of the category of maximal coactions to the category of $E$-coactions.
\end{thm}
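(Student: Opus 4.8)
The plan is to prove the two halves of an equivalence separately: that $\tau_E$, restricted to the full subcategory of maximal coactions, is essentially surjective onto the $E$-coactions, and that it is fully faithful there. The engine for both is the fact recorded in the proof of \lemref{determined}: for a maximal coaction $(B,\epsilon)$ the quotient map $Q^E_B:(B,\epsilon)\to (B^E,\epsilon^E)$ is itself a maximalization, since $\epsilon$ is maximal and $\ker Q^E_B=B_E$ is a small ideal. In particular $Q^E_B$ enjoys the universal property of maximalizations established in \secref{categories}.

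First I would confirm that $\tau_E$ carries maximal coactions to $E$-coactions, so that the restriction makes sense: for maximal $(B,\epsilon)$ the coaction $\tau_E(B,\epsilon)=(B^E,\epsilon^E)$ is an $E$-coaction by the implication $(3)\Rightarrow(1)$ of \lemref{determined} (take the maximal coaction there to be $(B,\epsilon)$ itself). Essential surjectivity is then the reverse implication $(1)\Rightarrow(3)$: an arbitrary $E$-coaction $(A,\delta)$ is isomorphic to $(B^E,\epsilon^E)=\tau_E(B,\epsilon)$ for some maximal $(B,\epsilon)$.

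For fullness and faithfulness, fix maximal coactions $(A,\delta)$ and $(B,\epsilon)$ and study the assignment $\phi\mapsto\phi^E$ on hom-sets; now both $Q^E_A$ and $Q^E_B$ are maximalizations. Given a morphism $\rho:(A^E,\delta^E)\to (B^E,\epsilon^E)$, the composite $\rho\circ Q^E_A:(A,\delta)\to (B^E,\epsilon^E)$ is a morphism out of the maximal coaction $(A,\delta)$ into the target of the maximalization $Q^E_B$, so the universal property supplies a unique $\phi:(A,\delta)\to (B,\epsilon)$ with $Q^E_B\circ\phi=\rho\circ Q^E_A$. Comparing this with the defining relation $Q^E_B\circ\phi=\phi^E\circ Q^E_A$ for $\phi^E$ and cancelling the surjection $Q^E_A$ gives $\phi^E=\rho$, proving fullness. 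For faithfulness, if $\phi^E=\psi^E$ then $Q^E_B\circ\phi=\phi^E\circ Q^E_A=\psi^E\circ Q^E_A=Q^E_B\circ\psi$, and the uniqueness clause of the same universal property forces $\phi=\psi$. Being fully faithful and essentially surjective, $\tau_E$ is an equivalence of maximal coactions with $E$-coactions.

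The only step that demands care is the invocation of the universal property of maximalization: it applies exactly because the domain $(A,\delta)$ is maximal and because $Q^E_B$ is a genuine maximalization of $(B^E,\epsilon^E)$ --- which is where maximality of $(B,\epsilon)$ and smallness of $B_E$ enter. Once that is in place the rest is formal, so I do not anticipate a serious obstacle beyond assembling these ingredients correctly.
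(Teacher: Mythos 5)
Your proof is correct. The skeleton matches the paper's: both reduce to essential surjectivity plus fully faithfulness, both get essential surjectivity immediately from \lemref{determined}, and both rest on the observation (made in the paper inside the proof of \lemref{determined}) that for a maximal coaction $(B,\epsilon)$ the map $Q^E_B:(B,\epsilon)\to(B^E,\epsilon^E)$ is itself a maximalization. Where you diverge is in how fully faithfulness is executed. The paper proves faithfulness by passing to normalizations: it assembles a prism-shaped diagram relating $\phi$, $\phi^E$, and $\phi^n$, and invokes faithfulness of the normalization functor (citing \cite[Corollary~6.1.19]{bkq1}) to conclude that distinct $\phi,\psi$ have distinct images $\phi^E,\psi^E$; it then proves fullness by applying the maximalization functor to a given $\sigma:A^E\to B^E$ and appealing to that same external result a second time. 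You instead handle both halves in one stroke from the universal property of maximalization applied to the maximalization $Q^E_B$: existence of the lift gives fullness (after cancelling the surjection $Q^E_A$ against the naturality square $Q^E_B\circ\phi=\phi^E\circ Q^E_A$), and the uniqueness clause gives faithfulness. This is a genuine simplification --- it stays entirely within the maximalization machinery already developed in \secref{categories}, avoids the normalization functor and the citation to \cite{bkq1} altogether, and makes transparent that the single hypothesis doing the work is that $Q^E$ restricted to maximal coactions consists of maximalization maps.
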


Note: in the above statement, we mean the \emph{full} subcategories of the category of coactions.

\begin{proof}
By abstract nonsense, it suffices to show that the functor is essentially surjective and fully faithful, i.e.,
\begin{enumerate}
\item
every $E$-coaction $(A,\delta)$ is isomorphic to $(B^E,\epsilon^E)$ for some maximal coaction $(B,\epsilon)$, and

\item
for any two maximal coactions $(A,\delta)$ and $(B,\epsilon)$,
\[
\phi\mapsto \phi^E
\]
maps the set of equivariant homomorphisms $\phi:A\to B$
bijectively onto the set of equivariant homomorphisms $\psi:A^E\to B^E$.
\end{enumerate}

(1) is immediate from \lemref{determined}.
For (2),
given maximal coactions $(A,\delta)$ and $(B,\epsilon)$
and distinct nondegenerate equivariant homomorphisms $\phi,\psi:A\to B$,
we have an equivariant commutative diagram
\[
\xymatrix{
A \ar[rr]^-\phi \ar[dr]^{Q^E_A} \ar[dd]_{\Lambda_A}
&&B \ar[dr]^{Q^E_B} \ar@{-}[d]
\\
&A^E \ar[rr]^(.3){\phi^E} \ar[dl]^{\Lambda^E_A}
&{} \ar[d]_(.4){\Lambda_B}
&B^E \ar[dl]^{\Lambda^E_B}
\\
A^n \ar[rr]_-{\phi^n}
&&B^n,
}
\]
where $Q^E_A$ is a maximalization of $(A^E,\delta^E)$, $\Lambda_A$ is a normalization of $(A,\delta)$, and $\Lambda^E_A$ is a normalization of $(A^E,\delta^E)$, and similarly for the right-hand triangle involving the $B$'s.
There is a similar commutative diagram for $\psi$.
Since the normalizations $\phi^n$ and $\psi^n$ are distinct, by
\cite[Corollary~6.1.19]{bkq1}, we must have $\phi^E\ne \psi^E$ by commutativity of the diagram.
This proves injectivity.
For the surjectivity,
let $\sigma:A^E\to B^E$ be an equivariant homomorphism.
Then the maximalization $\sigma^m:A\to B$ of $\sigma$ is the unique equivariant homomorphism making the diagram
\[
\xymatrix{
A \ar[r]^-{\sigma^m} \ar[d]_{Q^E_A}
&B \ar[d]^{Q^E_B}
\\
A^E \ar[r]_-\sigma
&B^E
}
\]
commute.
Applying the functor $\tau_E$, we see that the diagram
\[
\xymatrix{
A \ar[r]^-{\sigma^m} \ar[d]_{Q^E_A}
&B \ar[d]^{Q^E_B}
\\
A^E \ar[r]_-{(\sigma^m)^E}
&B^E
}
\]
also commutes, so we must have $\sigma^m=((\sigma^m)^E)^m$ by the universal property of maximalization,
and hence $\sigma=(\sigma^m)^E$ by \cite[Corollary~6.1.19]{bkq1} again.
\end{proof}

\begin{rem}
Much of the development in this paper regarding ``classical'' categories carries over to the ``nondegenerate'' categories (involving multiplier algebras). The nondegenerate version of the above result
resembles the ``maximal-normal equivalence'' of \cite[Theorem~3.3]{clda},
which says that normalization restricts to an equivalence between maximal and normal coactions.
However, there are some properties missing: for example,
the functor $\tau_E$ is not a reflector in the categorical sense,
because
\[
Q_E:(A^E,\delta^E)\to (A^{EE},\delta^{EE})
\]
is not an isomorphism in general.
Indeed,
\cite[Proposition~8.4]{exotic} shows that
if $(A,\delta)$ is a maximal coaction
then the composition $(\id\otimes q_E)\circ\delta^E$ in the commutative diagram
\[
\xymatrix@C+30pt{
A \ar[d]_{Q_E}
\\
A^E \ar[r]^-{\delta^E} \ar[dr]_{(\id\otimes q_E)\circ\delta^E\hspace{.2in}}
&M(A^E\otimes C^*(G)) \ar[d]^{\id\otimes q_E}
\\
&M(A^E\otimes C^*_E(G))
}
\]
need not be faithful.
Thus we cannot characterize the $E$-coactions as the coactions that are ``$E$-normal''
in the sense that the map $Q_E$ is faithful.
Furthermore, unlike with 
normalization,
\remref{mE} shows that $\tau_E$ is not isomorphic to its composition with maximalization.
\end{rem}

\begin{q}\label{min E}
Let $\FF$ be a collection of large ideals of $B(G)$,
and let
\[
F=\bigcap_{E\in\FF}E.
\]
Then $F$ is a large ideal of $B(G)$.
Is 
$\tau_F$
a greatest lower bound for the coaction functors 
$\{\tau_E:E\in\FF\}$?
(It is easy to see that 
$\tau_F$ is a lower bound.)
What if we take $\FF$ to be the set of all large ideals $E$ of $B(G)$ for which 
$\tau_E$
is exact?
\end{q}

\begin{q}
Given a coaction functor $\tau$,
is there a large ideal $E$ of $B(G)$
such that, after restricting to maximal coactions, $\tau$ is naturally isomorphic to $\tau_E$?
Note that at the level of objects the statement is false:
\cite[Example~5.4]{BusEch} gives a source of examples of a maximal coaction $(A,\delta)$ and a weakly invariant ideal $I\subset \ker q^n_A$ such that the quotient coaction $(A/I,\delta^I)$ is not of the form $(A^E,\delta^E)$ for any large ideal $E$.
(\cite[Theorem~6.10]{exotic} gives related examples, albeit not involving maximal coactions.)
Here is a related question: do there exist coaction functors that include the Buss-Echterhoff examples?
Such a functor could not be exact, since the Buss-Echterhoff examples are explicitly based upon short exact sequences whose image under the quotient maps are not exact.
We could ask the same question for the functors $\tau_E$, which, again, is exact for $E=B(G)$ but not for $E=B_r(G)$.
\end{q}

\begin{q}
For which large ideals $E$ is the coaction functor $E$-ization exact?
Exactness trivially holds for $E=B(G)$, since $B(G)$-ization coincides with maximalization.
On the other hand, exactness does not always hold for $E=B_r(G)$, because Gromov has shown the existence of nonexact groups.
\end{q}

\begin{q}
Let $\tau$ be the minimal exact and Morita compatible coaction functor.
Applying $\tau$ to the canonical coaction $(C^*(G),\delta_G)$,
we get a coaction $(C^*(G)^\tau,\delta_G^\tau)$,
with a canonical quotient map
\[
q^\tau:C^*(G)\to C^*(G)^\tau.
\]
Then
\[
E_\tau:=(\ker q^\tau)^\perp
\]
is a large ideal of $B(G)$,
by \cite[Corollary~3.13]{graded}.

Does the functor $\tau$ coincide with ${E_\tau}$-ization?
This is related to the following question:
is
\begin{align*}
E_\tau
&=\bigcap\{E:\text{$E$ is a large ideal such}
\\&\hspace{1in}\text{that $E$-ization is exact}\}?
\end{align*}
Again we could ask the analogous questions for $\tau_E$.
See also the discussion in \cite[Section~8.1]{bgwexact}.
\end{q}

\begin{rem}
Related to \qref{min E} above,
what if we consider only finitely many large ideals?
Let $E$ and $F$ be two large ideals, and let $D=E\cap F$, which is also a large ideal.
Suppose that the coaction functors $\tau_E$ and $\tau_F$ are both exact.

Is $\tau_D$ exact?
We proved in \corref{intersect} that
exactness of $E$ implies that $D$ is the weak*-closed span of the set of products $EF$,
and then we can deduce from this that if
\[
\xymatrix{
0\ar[r]
&(I,\gamma)\ar[r]^\phi
&(A,\delta)\ar[r]^\psi
&(B,\epsilon)\ar[r]
&0
}
\]
is a short exact sequence of coactions,
and if we assume that $\delta$ is \emph{w-proper} in the sense that $(\omega\otimes\id)\circ\delta(A)\subset C^*(G)$ for all $\omega\in A^*$,
then the sequence
\[
\xymatrix{
0\ar[r]
&I^D\ar[r]^{\phi^D}
&A^D\ar[r]^{\psi^D}
&B^D\ar[r]
&0
}
\]
is exact.
We see a way to parlay this into a proof that $\tau_D$ is indeed exact,
but this requires a somewhat more elaborate version of Morita compatibility,
involving not only imprimitivity bimodules but more general $C^*$-correspondences.
This will perhaps resemble the property that Buss, Echterhoff, and Willett call \emph{correspondence functoriality} (see \cite[Theorem~4.9]{BusEchWil}).
We plan to address this in a forthcoming publication.
\end{rem}

\begin{appendix}

\section{$B(G)$-module lemmas}\label{module lemmas}

Every coaction $\delta:A\to M(A\otimes C^*(G))$ gives rise to a $B(G)$-module structure on $A$ via
\[
f\cdot a=(\id\otimes f)\circ\delta(a)\righttext{for}f\in B(G),a\in A.
\]
We feel that this module structure is under-appreciated,
and will point out here several situations in which it 
makes things easier, since it allows us
to avoid computations with tensor products.

\begin{prop}\label{equivariant}
Let $(A,\delta)$ and $(B,\epsilon)$ be coactions of $G$, and let $\varphi:A\to B$ be a homomorphism. Then $\phi$ is $\delta-\epsilon$ equivariant if and only if
it is a module map, i.e., 
\[
\varphi(f\cdot a)=f\cdot \varphi(a)\righttext{for all}f\in B(G),a\in A.
\]
\end{prop}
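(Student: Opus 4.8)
The plan is to test the equivariance condition $\epsilon\circ\varphi=\bar{\varphi\otimes\id}\circ\delta$ --- an equality of maps $A\to\wilde M(B\otimes C^*(G))$ --- against the functionals $f\in B(G)=C^*(G)^*$ by applying the slice maps $\id\otimes f$. Two facts drive everything. First, a naturality (commutation) identity:
\[
(\id\otimes f)\bigl(\bar{\varphi\otimes\id}(\delta(a))\bigr)=\varphi\bigl((\id\otimes f)(\delta(a))\bigr)=\varphi(f\cdot a)
\qquad(f\in B(G),\ a\in A),
\]
which is meaningful because the inner slice $(\id\otimes f)(\delta(a))=f\cdot a$ lands in $A$. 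Second, the family $\{\id\otimes f:f\in B(G)\}$ separates the points of $\wilde M(B\otimes C^*(G))$.

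Granting these, the proposition is immediate. If $\varphi$ is equivariant, then $\bar{\varphi\otimes\id}(\delta(a))=\epsilon(\varphi(a))$, so the commutation identity gives $\varphi(f\cdot a)=(\id\otimes f)(\epsilon(\varphi(a)))=f\cdot\varphi(a)$, that is, $\varphi$ is a $B(G)$-module map. Conversely, if $\varphi$ is a module map then for every $f$ and $a$ the commutation identity and the module condition give
\[
(\id\otimes f)\bigl(\bar{\varphi\otimes\id}(\delta(a))\bigr)=\varphi(f\cdot a)=f\cdot\varphi(a)=(\id\otimes f)\bigl(\epsilon(\varphi(a))\bigr);
\]
since all slices of $\bar{\varphi\otimes\id}(\delta(a))$ and $\epsilon(\varphi(a))$ agree, separation forces $\bar{\varphi\otimes\id}(\delta(a))=\epsilon(\varphi(a))$, which is equivariance.

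Separation is easy: if $(\id\otimes f)(N)=0$ for all $f$, where $N\in\wilde M(B\otimes C^*(G))$, then for each $c\in C^*(G)$ the element $N(1\otimes c)\in B\otimes C^*(G)$ has $(\id\otimes g)(N(1\otimes c))=(\id\otimes g_c)(N)=0$ for all $g$, where $g_c(x)=g(xc)$; as the slices separate points of the honest tensor product $B\otimes C^*(G)$, we get $N(1\otimes c)=0$ for all $c$, whence $N=0$. The main obstacle is the commutation identity, and the one delicate point is that $\varphi$ need not be nondegenerate. I would first check it on the algebraic level, where it is trivial: $\bar{\varphi\otimes\id}$ restricts to $\varphi\otimes\id$ on $A\otimes C^*(G)$, and $(\id\otimes f)(\varphi(a')\otimes c)=\varphi(a')f(c)=\varphi((\id\otimes f)(a'\otimes c))$. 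To reach $\delta(a)\in\wilde M(A\otimes C^*(G))$, take an approximate identity $(e_i)$ of $C^*(G)$; then $\bar{\varphi\otimes\id}(\delta(a))(1\otimes e_i)=(\varphi\otimes\id)(\delta(a)(1\otimes e_i))$ by the defining property of the canonical extension (\cite[Proposition~A.6]{enchilada}), so by the algebraic identity and strict continuity of the slice maps (\cite[Lemma~1.5]{lprs}),
\[
(\id\otimes f)\bigl(\bar{\varphi\otimes\id}(\delta(a))\bigr)=\lim_i\varphi\bigl((\id\otimes f)(\delta(a)(1\otimes e_i))\bigr)
\]
strictly in $M(B)$, the net on the right lying in $\varphi(A)$. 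Finally, to identify this strict limit with $\varphi(f\cdot a)$, I would reduce to the case that $\varphi$ has dense range (replacing $B$ by $C=\bar{\varphi(A)}$, which is harmless for the slice computation) and multiply on the right by $\varphi(a')$: since $(\id\otimes f)(\delta(a)(1\otimes e_i))\to f\cdot a$ strictly in $M(A)$, multiplication by the fixed element $a'$ converges in norm and $\varphi$ is norm-continuous, giving $(\id\otimes f)(\bar{\varphi\otimes\id}(\delta(a)))\,\varphi(a')=\varphi(f\cdot a)\varphi(a')$ for all $a'$ (and symmetrically on the left); nondegeneracy of $\varphi:A\to C$ then yields the identity.
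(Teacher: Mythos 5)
Your skeleton is exactly the paper's: the paper proves the forward direction by the computation $\varphi(f\cdot a)=(\id\otimes f)\bigl(\bar{\varphi\otimes\id}(\delta(a))\bigr)=(\id\otimes f)\bigl(\epsilon(\varphi(a))\bigr)=f\cdot\varphi(a)$, and the converse by observing that the same computation makes all $B(G)$-slices of $\bar{\varphi\otimes\id}(\delta(a))$ and $\epsilon(\varphi(a))$ agree, and that such slices separate points. The paper simply asserts the two background facts (the slice/morphism commutation identity and separation), whereas you try to prove them; your separation argument (cutting down by $1\otimes c$, passing to $g_c$, and using separation on the honest tensor product) is correct, and it suffices here since both elements being compared lie in $\wilde M(B\otimes C^*(G))$.

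The one soft spot is the end of your commutation-identity argument. Multiplying by $\varphi(a')$ correctly yields $T\varphi(a')=\varphi(f\cdot a)\varphi(a')$ (and the left-handed version), where $T:=(\id\otimes f)\bigl(\bar{\varphi\otimes\id}(\delta(a))\bigr)\in M(B)$; but this only says that $T$ and $\varphi(f\cdot a)$ have the same products with elements of $C=\bar{\varphi(A)}$, and a multiplier of $B$ is \emph{not} determined by its products with a subalgebra, so declaring the passage from $B$ to $C$ ``harmless'' hides the real step. The gap is fillable with what you already have: since $Tb=\lim_i x_ib$ with $x_i\in C$, the element $w:=(T-\varphi(f\cdot a))b$ lies in $\clspn(CB)$ and satisfies $Cw=\{0\}$, whence $w^*w=0$ and $w=0$. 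But it is cleaner to avoid strict limits entirely via Cohen factorization (the same trick the paper uses in the proof of its Lemma 3.6): since $B(G)=C^*(G)^*$ is a nondegenerate Banach $C^*(G)$-module, write $f=g\cdot x$ with $g\in B(G)$, $x\in C^*(G)$; then
\[
(\id\otimes f)\bigl(\bar{\varphi\otimes\id}(\delta(a))\bigr)
=(\id\otimes g)\bigl((1\otimes x)\bar{\varphi\otimes\id}(\delta(a))\bigr)
=(\id\otimes g)\bigl((\varphi\otimes\id)((1\otimes x)\delta(a))\bigr)
=\varphi\bigl((\id\otimes g)((1\otimes x)\delta(a))\bigr)
=\varphi(f\cdot a),
\]
where the second equality is the defining property of the extension $\bar{\varphi\otimes\id}$, the third is the algebraic identity on the honest tensor product (note $(1\otimes x)\delta(a)\in A\otimes C^*(G)$ because $\delta(a)\in\wilde M(A\otimes C^*(G))$), and the last uses $f=g\cdot x$ again. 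This way degeneracy of $\varphi$ never enters.
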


\begin{proof}
First assume that $\phi$ is $\delta-\epsilon$ equivariant,
and let $f\in B(G)$ and $a\in A$. Then
\begin{align*}
\phi(f\cdot a)
&=\phi\bigl((\id\otimes f)\circ\delta(a)\bigr)
\\&=(\id\otimes f)\bigl((\phi\otimes\id)\circ\delta(a)\bigr)
\\&=(\id\otimes f)\bigl(\epsilon\circ\phi(a)\bigr)
\\&=f\cdot \phi(a).
\end{align*}

Conversely, assume that $\phi$ is a module map,
and let $a\in A$.
Then for every $f\in B(G)$ the above computation shows that
\[
(\id\otimes f)\bigl((\phi\otimes\id)\circ\delta(a)\bigr)
=(\id\otimes f)\bigl(\epsilon\circ\phi(a)\bigr),
\]
and it follows that $(\phi\otimes\id)\circ\delta(a)
=\epsilon\circ\phi(a)$ since slicing by $B(G)=C^*(G)^*$ separates points of $M(B\otimes C^*(G))$.
\end{proof}

\begin{prop}\label{invariant module}
Let $(A,\delta)$ be a coaction, and let $I$ be an ideal of $A$ then $I$ is weakly $\delta$-invariant if and only if it is invariant for the module structure, i.e.,
\[
B(G)\cdot I\subset I.
\]
\end{prop}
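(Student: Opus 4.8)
The plan is to reduce the statement to the kernel characterization
\[
\ker \bar{q\otimes\id}\circ\delta = \{a\in A: B(G)\cdot a\subset I\},
\]
where $q:A\to A/I$ is the quotient map. Once this formula is in hand the proposition is immediate: by definition $I$ is weakly $\delta$-invariant exactly when $I\subset \ker\bar{q\otimes\id}\circ\delta$, and by the formula this says precisely that every $a\in I$ satisfies $B(G)\cdot a\subset I$, that is, $B(G)\cdot I\subset I$. So the real content is the kernel formula, which was quoted without proof in \secref{prelim} (citing the proof of \cite[Lemma~3.11]{graded}); I would re-derive it here in the same spirit as \propref{equivariant}.

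First I would record the intertwining identity
\[
(\id_{A/I}\otimes f)\circ\bar{q\otimes\id} = q\circ(\id_A\otimes f)\midtext{for each}f\in B(G),
\]
as maps $M(A\otimes C^*(G))\to A/I$. On an elementary tensor $a\otimes c$ both sides equal $q(a)f(c)$, and the identity then extends to all of $M(A\otimes C^*(G))$ using that $\bar{q\otimes\id}$ is the canonical extension of $q\otimes\id$ together with strict continuity of the slice maps $\id\otimes f$ \cite[Lemma~1.5]{lprs}. Granting this, for $a\in A$ I compute
\[
(\id_{A/I}\otimes f)\bigl(\bar{q\otimes\id}(\delta(a))\bigr) = q(f\cdot a)\midtext{for all}f\in B(G).
\]
Since slicing by elements of $B(G)=C^*(G)^*$ separates the points of $M((A/I)\otimes C^*(G))$, the multiplier $\bar{q\otimes\id}(\delta(a))$ vanishes if and only if $q(f\cdot a)=0$, i.e.\ $f\cdot a\in I$, for every $f\in B(G)$; this is exactly the condition $B(G)\cdot a\subset I$, yielding the kernel formula and hence the proposition.

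The hard part will be verifying the intertwining identity on the full multiplier algebra rather than merely on $A\otimes C^*(G)$: one must be sure that $\bar{q\otimes\id}$ is genuinely compatible with the slice maps, which is precisely where strict continuity enters. Everything else is a direct translation of definitions, and the final biconditional follows formally from the kernel formula just as the equivalence sketched in \secref{prelim}.
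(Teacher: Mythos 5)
Your proposal is correct and takes essentially the same approach as the paper: both arguments come down to the identity $q(f\cdot a)=(\id\otimes f)\bigl(\bar{q\otimes\id}\circ\delta(a)\bigr)$ together with the fact that slicing by $B(G)=C^*(G)^*$ separates points of $M((A/I)\otimes C^*(G))$. The only difference is organizational --- you first establish the kernel formula $\ker\bar{q\otimes\id}\circ\delta=\{a\in A:B(G)\cdot a\subset I\}$ (the fact from \cite[Lemma~3.11]{graded} quoted in \secref{prelim}) and then read off both implications, whereas the paper runs the same slice computation directly in each direction; your extra care in justifying the intertwining identity via strict continuity is a harmless refinement (noting only that slices of general multipliers land in $M(A/I)$ rather than $A/I$).
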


\begin{proof}
First assume that $I$ is $\delta$-invariant,
and let $f\in B(G)$ and $a\in I$.
We must show that $f\cdot a\in I$.
Let $q:A\to A/I$ be the quotient map.
We have
\begin{align*}
q(f\cdot a)
&=q\bigl((\id\otimes f)(\delta(a))\bigr)
\\&=(\id\otimes f)\bigl((q\otimes\id)\circ\delta(a)\bigr)
\\&=0,\righttext{since $I\subset \ker (q\otimes\id)\circ\delta$.}
\end{align*}

Conversely, assume that $I$ is $B(G)$-invariant,
and let $a\in I$.
We need to show that $a\in \ker (q\otimes\id)\circ\delta$.
For every $f\in B(G)$ we have $f\cdot a\in I$, so
\begin{align*}
0
&=q(f\cdot a)
\\&=(\id\otimes f)\bigl((q\otimes\id)\circ\delta(a)\bigr).
\end{align*}
It follows that $(q\otimes\id)\circ\delta(a)=0$
since slicing by $B(G)$ separates points in $M(A\otimes C^*(G))$.
\end{proof}

\begin{rem}
It has been noticed elsewhere in the literature that the $B(G)$-module structure can be useful in other ways.
For example,
$\delta$ is slice-proper \cite[Definition~5.1]{exotic} if and only if the maps
\[
f\mapsto f\cdot a:B(G)\to A
\]
are weak*-weak continuous (for $a\in A$)
\cite[Lemma~5.3]{exotic}.
Also,
for any full coaction $(A,\delta)$,
\[
A_0:=\clspn\{A(G)\cdot A\}
\]
is a $C^*$-subalgebra and a nondegenerate $A(G)$-submodule of $A$,
where $A(G)$ is the Fourier algebra of $A$,
and $\delta$ is nondegenerate if and only if $A_0=A$
\cite[Lemma~1.2, Corollary~1.5]{fullred} (see also \cite[Lemma~2]{kat} 
In the same vein,
\cite[Corollary~1.7]{fullred} says that if $B$ is a nondegenerate $A(G)$-submodule of $M(A)$, then $\delta|_B$ is a nondegenerate coaction of $G$ on $B$.
\end{rem}

\end{appendix}

%\bibliographystyle{amsalpha}
%\bibliography{cstar}

\providecommand{\bysame}{\leavevmode\hbox to3em{\hrulefill}\thinspace}
\providecommand{\MR}{\relax\ifhmode\unskip\space\fi MR }
% \MRhref is called by the amsart/book/proc definition of \MR.
\providecommand{\MRhref}[2]{%
  \href{http://www.ams.org/mathscinet-getitem?mr=#1}{#2}
}
\providecommand{\href}[2]{#2}

\end{document}